\newtheorem{theorem}{Theorem}[section]
\newtheorem{lem}[theorem]{Lemma}
\newtheorem{prop}[theorem]{Proposition}
\newtheorem{corollary}[theorem]{Corollary}
\newenvironment{definition}[1][Definition]{\begin{trivlist}
\item[\hskip \labelsep {\bfseries #1}]}{\end{trivlist}}
\title{Oracle inequalities for a Group Lasso procedure applied to generalized linear models in high dimension}
\author{M\'elanie~Blaz\`ere,
        Jean-Michel~Loubes
        and~Fabrice~Gamboa}
\date{} 
\begin{document}

\maketitle

\begin{abstract}
We present a Group Lasso procedure for generalized linear  models (GLMs) and we study the properties of this estimator applied to sparse high-dimensional GLMs. Under general conditions on the covariates and on the joint distribution of the pair covariates, we provide oracle inequalities promoting group sparsity of the covariables. We get convergence rates for the prediction and estimation error and we show the ability of this estimator to recover good sparse approximation of the true model. Then we extend this procedure to the case of an Elastic net penalty. At last we apply these results to the so-called  Poisson regression model (the output is modeled as a Poisson process whose intensity relies on a linear combination of the covariables). The Group Lasso method enables to select few groups of meaningful variables among the set of inputs.

\end{abstract}

{\bf Keywords:} Generalized linear model, high dimension, sparse model, groups of variables, Group Lasso, oracle inequalities.

\section{\textbf{Introduction}}
Handling high dimensional data is nowadays required for  many practical applications ranging from astronomy, economics, industrial problems to biomedicine. Being able to extract information from these large data sets has been at the heart of statistical studies over the last decades and many papers have  extensively studied this setting in a lot of fields ranging from statistical inference to machine learning.  We refer for instance to references therein. 

For high-dimensional data, classical methods based on a direct minimization of the empirical risk can lead to over fitting. Actually adding a complexity penalty enables to avoid it by selecting fewer coefficients. Using an $\ell_0$ penalty leads to sparse solutions but the usely non-convex minimization problem turns out to be extremely difficult to handle when the number of parameters becomes large. Hence the $\ell_1$ type penalty has been introduced to overcome this issue. On the one hand this penalty achieves sparsity of an estimated parameter vector and on the other hand it requires only convex optimization type calculations which are computationally feasible even for high dimensional data.  The use of a $\ell_1$ type penalty, first proposed in \cite{TIB96} by Tibshirani, is now a well established procedure which has been studied in a large variety of models. We refer for example to \cite{BIC09}, \cite{BUN08}, \cite{VAN08}, \cite{FRIED10}, \cite{TAR06}, \cite{ZHANG08} and \cite{BUH11}.

Group sparsity can be promoted by imposing a $\ell_{2}$ penalty to individual groups of variables and then a $\ell_{1}$ penalty to the resulting block norms. Yuan and Lin \cite{YUAN07} proposed an extension of the Lasso in the case of linear regression and presented an algorithm when the model matrices in each group are orthonormal. This extension, called the Group Lasso, encourages blocks sparsity. Wei and Huang \cite{WEI10} studied  the properties of the Group Lasso for linear regression, Nardi and Rinaldo \cite{RIN08} established asymptotic properties  and Lounici, Pontil, van de Geer and Tsybakov \cite{LOU11} stated oracle inequalities in linear Gaussian noise under group sparsity. Meir, van de Geer and B{\"u}hlmann \cite{MEIR08} considered the Group Lasso in the case of logistic regression and Zhang and Huang \cite{ZHANG08} studied the benefit of group sparsity. Another important reference is the work of Negahban, Ravikumar, Wainwright and Yu \cite{NEG12}. In this last paper a unified framework for the study of rates of convergence in high dimensional setting is provided under two key assumptions (restricted strong convexity and decomposability). This work and these two assumptions will be discussed in more details in Section \ref{s:results}.

In this paper we focus on the Group Lasso penalty to select and estimate parameters in the generalized linear model. One of the application is the Poisson regression model. More precisely, we consider the  generalized linear model introduced by McCullagh and Nelder \cite{McC86}. Let $F$ be a distribution on $\mathbb{R}$ and let $(X,Y)$ be a pair of random variables with $X\in \mathbb{R}^{p}$ and $Y \in \mathbb{R}$. The conditional law of  $Y\vert X=x$ is modeled by a distribution from the exponential family and the canonical parameter is the linear predictor. Thus the conditional distribution of the observations given $X=x$
is $P(Y\vert\beta^{*}, x)=\exp(y{\beta^*}^{T}x-\psi({\beta^{*}}^{T}x))$, where ${\beta^{*}}^{T}x$ satisfies $\int\exp(y{\beta^{*}}^{T} x)F(dy)<\infty $ and $\psi$ is the normalized function. Notice that $\mathbb{E}(Y\vert X)\overset{\textrm{a.s}}{=}\psi^{'}({\beta^{*}}^{T}X)$ in other words ${\beta^{*}}^{T}X\overset{\textrm{a.s}}{=}h(\mathbb{E}(Y\vert X))$ where $h={\psi^{'}}^{-1}$ is the so-called link function. Some common examples of generalized linear models are the Poisson regression for count data, logistic and probit regression for binary data or multinomial regression for categorical data. The quantities of interest that we would like to estimate are the component $(\beta^*_{j})_{1\leqslant j\leqslant p}$ of $\beta^*$ and for a given $x$ we may also wish to predict the response $Y\vert X=x$. A natural field of applications for such models is given by genomics and Poisson regression type models. In particular  thousands of variables such as expressions of genes and bacterias can be measured for each animal (mices) in a (pre-)clinical study thanks to the developpement of micro arrays (see, for example, \cite{BRO99} and \cite{DUG99} ). A typical goal is to classify their health status, e.g. healthy or diseased, based on their bio-molecular profile, i.e. the thousands of bio-molecular variables measured for each individual (see for instance \cite{HEL97}, \cite{QUACK06} and \cite{PARK02}).

The paper falls into the following parts. In Section~\ref{s:descrip} we describe the model and the Group Lasso estimator for generalized linear models. In Section~\ref{s:results} we present the main results on coefficients estimation and prediction error and in Section~\ref{s:extension} we consider the model in the particular case of a Poisson regression. We also study here the general model in the case of a mixture of an $\ell_{1}$ and $\ell_{2}$ penalty. The Group Lasso estimator is then used on simulated data sets in Section~\ref{s:simu} and its performances are compared to those of the Lasso estimator. The Appendix is devoted to the proof of the main theorems.

\section{\textbf{Sparse Variable selection for generalized linear models}} \label{s:descrip}

\subsection{\textbf{The model}}

The Exponential family on the real line is a unified family of distributions parametrized by a one dimensional parameter and is widely used for practical modelling. Let $F$ be a probability distribution on  $\mathbb{R}$ not concentrated on a point and 
$$\Theta:= \left\lbrace \theta \in \mathbb{R}:\: \int\exp(\theta x)F(dx)<\infty \right\rbrace.$$ 
Define
$$M(\theta):= \int\exp(\theta x)F(dx) \quad (\theta \in \Theta ) $$  
and 
$$\psi(\theta):= \log(M(\theta))\quad (\theta \in \Theta ).$$ 
Let $P(y;\theta)=\exp(\theta y -\psi(\theta))$ with $\theta \in \Theta$. The densities of probability (related to a mesure adapted to the continuous or discrete case) $\left\lbrace P(.;\theta): \theta \in \Theta \right\rbrace $ is called the exponential family. $\Theta$ is the natural parameter space and $\theta$ is called the canonical parameter. The exponential family includes most of the commonly used distributions like normal, gamma, poisson or binomial distributions \cite{McC86}.

We consider a pair of random variables $(X,Y)$ where $Y \in\mathbb{R}$ and $X\in \mathbb{R}^{p}$ such that the  conditional distribution  $Y\vert X=x$  is
$P(Y\vert\beta^{*},x)=\exp(y{\beta^{*}} ^{T}x-\psi({\beta^{*}} ^{T}x))$,  
with ${\beta^{*}}^{T}x \in \Theta$. 
Our aim is to estimate  the components $(\beta_{j}^{*})_{1\leqslant j\leqslant p}$ of $\beta^{*}$ in order  to predict the response $Y\vert X=x$ conditionally on a given value of $x$. We assume that
 \begin{itemize}
\item [\textbullet] (H.1): the variable $X$ is almost surely bounded by a constant $L$ i.e. there exists a constant $L>0$ such that $\parallel X\parallel_{\infty} \leqslant L$ a.s.
\item [\textbullet](H.2): for all $x \in \left[-L,L \right]^{p} $, ${\beta^{*}}^{T}x \in \textrm{Int}(\Theta)$
\end{itemize} 
and we consider $$\Lambda=\left\lbrace  \beta \in \mathbb{R}^{p}:\: \forall x \in \left[-L,L \right]^{p},\beta^{T}x \in \Theta \right\rbrace. $$
Let $(X_{1},Y_{1}),...,(X_{n},Y_{n})$ be i.i.d copies of $(X,Y)$ where $X_{i}=(X_{i,1},...,X_{i,p})^{T}$ for $i=1,...,n$. We consider the case of high-dimensional regression i.e $p\gg n$. The log-likelihood for a generalized linear model is given by 
$$\mathcal{L}(\beta)=\sum_{i=1}^{n}\left[Y_{i}\beta^{T}X_{i}-\psi(\beta^{T}X_{i}) \right]. $$
We denote the generalized linear model loss function by
\begin{equation}
l(\beta)=:l(\beta;x,y)=:-yf_{\beta}(x)+\psi(f_{\beta}(x)),
\label{loss-function}
\end{equation}
where $f_{\beta}(x):=\beta^{T}x$.
Notice that this function is convex in $\beta$ (as $\psi$ is convex).
The associated risk is defined by $\mathbb{P}l(\beta)=:\mathbb{E}l
(\beta;Y,X)$ and the empirical risk by $\mathbb{P}_{n}l(\beta)$  where
$$\mathbb{P}_{n}l(\beta):=\frac{1}{n}\sum_{i=1}^{n}\left[ -Y_{i}\beta^{T}X_{i}+\psi(\beta^{T}X_{i})\right] . $$
Obviously
$$\beta^{*}=\underset{\beta \in \Lambda}{\textrm{argmin}}\:\mathbb{P}l(\beta).$$

\subsection{\textbf{Group Lasso for generalized linear model}}

Assume that $X$ is structured into $G_{n}$ groups each of size $d_{g}$ for $g \in \left\lbrace 1,...,G_{n} \right\rbrace $. For $i=1,...,n$ we set $$X_{i}=(X^{1}_{i},...,X^{g}_{i},...,X^{G_{n}}_{i})^{T}$$ 
where $$ X^{g}_{i}=(X_{i,1}^{g},...,X_{i,d_{g}}^{g})^{T}$$ and $\sum_{g=1}^{G_{n}}d_{g}=p$. This decomposition is often natural in biology and micro arrays data when the covariates are genes expresssion (see for instance \cite{SEGAL03} and \cite{WEST01}). We allow the number of groups to increase with the sample size $n$, so we can consider the case where $G_{n}\gg n$. Define $d_{\textrm{max}}:= \underset{g \in \left\lbrace 1,...,G_{n}\right\rbrace }{\textrm{max}}d_{g}$ and $d_{\textrm{min}}:= \underset{g \in \left\lbrace 1,...,G_{n}\right\rbrace }{\textrm{min}}d_{g}$. For $\beta \in \mathbb{R}^{p}$ we denote by $\beta^{g}$ the sub-vector of $\beta$ whose indexes correspond to the index set of the $g^{th}$ group of $X$.

Let us consider the Group Lasso estimator which achieves group sparsity and is obtained as the solution of the convex optimization problem
$$
\hat{\beta}_n=\underset{\beta \in \Lambda}{\textrm{argmin}}\left\lbrace \mathbb{P}_{n}l(\beta)+\sum_{g=1}^{G_{n}}s(d_{g})\Vert \beta^{g}\Vert_{2}\right\rbrace 
$$
where $r_{n}$ is the tuning parameter.

Here $\Vert .\Vert_{2}$ refers to the Euclidian norm and $s$ is a given function. An increase in $r_{n}$ leads to a diminution of the $\beta^{g}$ to zero, this means that some blocks become simultaneously zero and groups of predictors drop out of the model. Typically we choose $s(d_{g}):=\sqrt{ d_{g}}$ to penalize more heavily groups of large size. Notice that if all the groups are of size one then we recover the Lasso estimator. The Group Lasso achieves variables selection and estimation simultaneously as the Lasso does. The penalty function is the sum of the $\ell_{2}$ norm of the groups of variables. Thus the Group Lasso estimator acts like the Lasso at the group level \cite{RIN08}, \cite{LOU11}, \cite{YUAN07}. Actually the objective function above is the sum of a particular loss function (which is convex and based on the exponential family) with a weighted regularizer. This type of convex optimization problem is referred as a regularized M-estimator in the paper of Negahban et al. \cite{NEG12}. We can also notice that the penalty norm satisfies the decomposability condition defined in this last paper.

We study estimation and prediction properties of the Group Lasso in high dimensional settings when the number of groups exceeds the sample size i.e. $G_{n}\gg n$. Define $H^{*}=\left\lbrace g: {\beta^{*}}^{g}\neq 0\right\rbrace $ the index set of the groups for which the correponding sub vectors of $\beta^{*}$ are non-zero and $m^{*}:=\vert H^{*}\vert$. Such a set characterizes the sparsity of the model. In the following ${H^{*}}^{c}$ denotes the index set of the groups which are not in $H^{*}$. We can notice that $H^{*}$ and $m^{*}$ depend on $n$ but for simplicity we do not specify this dependency. In general, it will be hopeless to estimate
all unknown parameters from data except if we make the assumption that the true parameter is group sparse. In this paper we consider that $\beta^{*}$ is partitioned into a number of groups, in correspondance with the partition of $X$, only few of which are relevant. The index of group sparsity $m^{*}$ will be discussed more deeply after Theorem \ref{theo-gp-glm-norm2}. We also assume (H.3): there exists a constant $B>0$ such that $\sum_{g=1}^{G_{n}}\sqrt{d_{g}}\Vert {\beta^{*}}^{g}\Vert_{2}\leqslant B$.

\section{\textbf{Main Results}}
\label{s:results}

\subsection{\textbf{Bounds for estimation and prediction error}}

Under some assumptions on the value of the parameter $r_{n}$ and on the covariance matrix of $X$ we are going to show the ability of this estimator to recover good sparse approximation of the true model. To prove oracle inequalities for the Group Lasso applied to generalized linear model we need to state concentration inequalities for the empirical process $\mathbb{P}_{n} \left( l(\beta)\right)$ for $\beta \in \Lambda$. 
This step is essential to compute an appropriate lower bound for the regularization parameter that ensures good statistical properties of the estimator with high probability. Notice that this step requires a boundedness assumption on the $\left( X_{i,j}\right) $ (cf. proof of Proposition \ref{prop:probaAetB}).

To state concentration inequalities we first break down the empirical process into a linear part and a part which depends on the normalized parameter $\psi$ $$\left( \mathbb{P}_{n}-\mathbb{P}\right) \left( l(\beta)\right)=\left( \mathbb{P}_{n}-\mathbb{P}\right) \left( l_{l}(\beta)\right)+\left( \mathbb{P}_{n}-\mathbb{P}\right) \left( l_{\psi}(\beta)\right)$$ where $l_{l}(\beta):=l_{l}(\beta,x,y)=-y\beta^{T}x$ and $l_{\psi}(\beta):=l_{\psi}(\beta,x)=\psi(\beta^{T}x).$  Define 
$$\mathcal{A}=\bigcap_{g=1}^{G_{n}}\left\lbrace L_{g}\leqslant r_{n}/2\right\rbrace $$
where $$L_{g}:=\left\Vert \frac{1}{\sqrt{d_{g}}n}\sum_{i=1}^{n}\left( Y_{i}X_{i}^{g}-\mathbb{E}(YX^{g})\right) \right\Vert_{2}$$ for all $g\in \left\lbrace 1,...,G_{n} \right\rbrace $ and
$$\mathcal{B}= \left\lbrace \underset{\beta: \sum_{g=1}^{G_{n}}\sqrt{d_{g}}\Vert \beta^{g}-{\beta^{*}}^g\Vert_{2}\leqslant M}{\textrm{sup}}\left\lvert \nu_{n}(\beta, \beta^{*}) \right\rvert \leqslant \dfrac{r_{n}}{2}\right\rbrace$$ where $$\nu_{n}(\beta, \beta^{*}):=\dfrac{(\mathbb{P}_{n}-\mathbb{P})\left( l_{\psi}(\beta^{*})-l_{\psi}(\beta)\right)}{\sum_{g=1}^{G_{_{n}}}\sqrt{d_{g}}\Vert \beta^{g}-{\beta^{*}}^g\Vert_{2}+\varepsilon_{n}}$$ with $M=8B+\varepsilon_{n}$ and $\varepsilon_{n}=\frac{1}{n}$. We assume that $G_{n}$ and $n$ are such that $\frac{\log(2G_{n})}{n}\leqslant 1$. The next proposition shows that the event $\mathcal{A}\cap \mathcal{B}$ occurs with high probability for some suitable values of the tuning parameter.
\begin{prop}\label{prop:probaAetB}
Let $$r_{n}\geqslant AKL \left\lbrace  C_{L,B}\vee\underset{\left\lbrace\vert x\vert\leqslant L\kappa_{n}\right\rbrace\cap \Theta}{\textrm{max}}\vert \psi^{'}(x)\vert)\right\rbrace \sqrt{\dfrac{2\log(2G_{n})}{n}}$$ where $K$ is a universal constant, $A>\sqrt{2}$, $\kappa_{n}:=17B+\frac{2}{n}$ and $C_{L,B}$ is defined in Lemma \ref{lem-moment}. We have 
$$\mathbb{P}\left(\mathcal{A}\cap \mathcal{B} \right)\geqslant 1-(C+2d_{\textrm{max}})(2G_{n})^{-A^{2}/2}.$$ where $C$ is a universal constant.
\end{prop}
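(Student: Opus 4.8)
The plan is to bound the failure probability by $\mathbb{P}\left((\mathcal{A}\cap\mathcal{B})^{c}\right)\leq\mathbb{P}(\mathcal{A}^{c})+\mathbb{P}(\mathcal{B}^{c})$ and to show that the two summands produce respectively the $2d_{\max}$ and the $C$ appearing in front of $(2G_{n})^{-A^{2}/2}$. The guiding observation is that $\mathcal{A}$ controls the \emph{linear} part $l_{l}$ of the loss while $\mathcal{B}$ controls the \emph{nonlinear} part $l_{\psi}$, in agreement with the decomposition $(\mathbb{P}_{n}-\mathbb{P})(l)=(\mathbb{P}_{n}-\mathbb{P})(l_{l})+(\mathbb{P}_{n}-\mathbb{P})(l_{\psi})$ recalled before the statement. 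This split also explains why the lower bound on $r_{n}$ features the maximum $C_{L,B}\vee\max|\psi'|$: the constant $C_{L,B}$ governs the moments of $YX^{g}$ entering the linear part, and $\max_{|x|\le L\kappa_{n}}|\psi'(x)|$ is the Lipschitz constant of $\psi$ entering the nonlinear part, so taking the larger of the two lets a single $r_{n}$ serve both events.

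For $\mathbb{P}(\mathcal{A}^{c})$ I reduce the group Euclidean norm to its coordinates. Writing $\bar Z^{g}=\frac{1}{n}\sum_{i}(Y_{i}X_{i}^{g}-\mathbb{E}(YX^{g}))$, the normalization gives $L_{g}=\|\bar Z^{g}\|_{2}/\sqrt{d_{g}}\leq\|\bar Z^{g}\|_{\infty}$, so $\{L_{g}>r_{n}/2\}\subseteq\bigcup_{k=1}^{d_{g}}\{|\bar Z^{g}_{k}|>r_{n}/2\}$. A union bound over the $G_{n}$ groups and their at most $d_{\max}$ coordinates reduces the task to a single centered average $\bar Z^{g}_{k}$. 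Here the boundedness of $X$ (H.1) together with the moment control $C_{L,B}$ of Lemma~\ref{lem-moment} supplies a variance proxy and Bernstein-type tails for $Y_{i}X^{g}_{i,k}$; the lower bound on $r_{n}$, proportional to $\sqrt{2\log(2G_{n})/n}$ with constant calibrated to that variance proxy, is exactly what turns each tail into a factor of the right order. The condition $A>\sqrt{2}$ ensures $A^{2}/2>1$, so that $(2G_{n})^{-A^{2}/2}$ outruns the $G_{n}$ terms of the outer union bound, leaving only the intra-group count $d_{\max}$ and a two-sided factor $2$, whence $\mathbb{P}(\mathcal{A}^{c})\leq 2d_{\max}(2G_{n})^{-A^{2}/2}$.

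For $\mathbb{P}(\mathcal{B}^{c})$ I turn to empirical-process technology. The first point is a deterministic range control: on the ball $D(\beta):=\sum_{g}\sqrt{d_{g}}\|\beta^{g}-{\beta^{*}}^{g}\|_{2}\leq M$, since $\|x^{g}\|_{2}\leq\sqrt{d_{g}}L$ one has $|\beta^{T}x|\leq L\sum_{g}\sqrt{d_{g}}\|\beta^{g}\|_{2}\leq L(M+B)$ by H.3, so the relevant linear predictors stay in a range contained in $[-L\kappa_{n},L\kappa_{n}]$ with $\kappa_{n}=17B+2/n$ (the extra slack reflecting the doubling inherent in the peeling step below); on this interval $\psi$ is Lipschitz with constant $\max_{|x|\le L\kappa_{n}}|\psi'(x)|$, the very quantity in the bound on $r_{n}$. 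I then symmetrize the process $(\mathbb{P}_{n}-\mathbb{P})(l_{\psi}(\beta^{*})-l_{\psi}(\beta))$ and apply the contraction principle to strip off the Lipschitz $\psi$, reducing to the linear Rademacher process indexed by $\beta^{T}x-{\beta^{*}}^{T}x$. By duality of the weighted group norm $\Omega(\beta)=\sum_{g}\sqrt{d_{g}}\|\beta^{g}\|_{2}$, whose dual is $\Omega^{*}(u)=\max_{g}\|u^{g}\|_{2}/\sqrt{d_{g}}$, the supremum of this linear process over $\{\Omega(\beta-\beta^{*})\le R\}$ equals $R$ times a quantity of exactly the same $L_{g}$-type as in $\mathcal{A}$, now built from $X^{g}$ alone and hence controlled through $L$; this is the structural reason a single calibrated $r_{n}$ governs both events. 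Finally the normalization by $D(\beta)+\varepsilon_{n}$ in $\nu_{n}$ is treated by peeling: slicing the ball into the logarithmically many shells $\{2^{j}\varepsilon_{n}<D(\beta)\le 2^{j+1}\varepsilon_{n}\}$, on each of which the denominator has constant order $2^{j}\varepsilon_{n}$, reduces $\sup|\nu_{n}|$ to the unnormalized supremum over a fixed-radius ball, which a concentration inequality for bounded empirical processes (Massart or Bousquet) controls with mean bounded by the contraction estimate; summing the shell tails yields the universal constant $C$ and $\mathbb{P}(\mathcal{B}^{c})\le C(2G_{n})^{-A^{2}/2}$.

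I expect the main obstacle to be the analysis of $\mathcal{B}$: combining the peeling device with the contraction step while keeping the Lipschitz domain under explicit control (the $\kappa_{n}$ bookkeeping), and matching the rate and constants of the nonlinear $\psi$-process to those of the linear part, so that the same $r_{n}$ and the same $\sqrt{\log(2G_{n})/n}$ rate serve both events simultaneously. The control of $\mathcal{A}$, by contrast, is a fairly standard coordinatewise Bernstein argument once Lemma~\ref{lem-moment} furnishes the moment bound.
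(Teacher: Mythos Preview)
Your proposal is correct and matches the paper's strategy almost exactly: the paper also bounds $\mathbb{P}(\mathcal{A}^{c})$ by the coordinatewise union bound $\{L_{g}>r_{n}/2\}\subseteq\bigcup_{j}\{|\bar Z^{g}_{j}|>r_{n}/2\}$ followed by Bernstein with the moment control $\mathbb{E}|W_{ij}^{g}|^{m}\le m!(2LC_{L,B})^{m}$ from Lemma~\ref{lem-moment}, and it bounds $\mathbb{P}(\mathcal{B}^{c})$ by symmetrization plus the contraction principle (using that $\psi$ is Lipschitz with constant $\max_{\{|x|\le L(R+B)\}\cap\Theta}|\psi'(x)|$ on the relevant range), together with a peeling into $j_{n}=\lfloor\log_{2}(nM)\rfloor+1$ shells and an innermost ball $E_{1}=\{D(\beta)\le\varepsilon_{n}\}$; the identity $\kappa_{n}=2M+B$ is exactly your ``doubling'' remark. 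The only minor deviation is the concentration tool for the nonlinear part: you invoke Massart/Bousquet, while the paper uses the simpler McDiarmid bounded-differences inequality (each $X_{i}$ changes $Z_{R}$ by at most $\tfrac{2}{n}LRD$), which is all that is needed here since the summands are bounded; either choice yields the same $(2G_{n})^{-A^{2}}$ tail per shell and hence the final $C(2G_{n})^{-A^{2}/2}$ after summing the $O(\log n)$ shells.
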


The proof rests on concentration inequalities and is detailled in the appendix. Indeed infering a bound for the probability of the events $\mathcal{A}$ and $\mathcal{B}$ is equivalent to prove concentration inequalities for the linear and non linear part of the empirical process. A concentration inequality for the linear part is derived from Bernstein inequality once the following lemma has been proved. This lemma provides moment bounds for $Y$.
\begin{lem}\label{lem-moment}
Let $(X,Y)$ a pair of random variables whose conditional distribution is
$P(Y;\beta^{*}\vert X=x)=\exp(y{\beta^{*}} ^{T}x-\psi({\beta^{*}} ^{T}x))$ and assume assumptions (H.1-3) are fulfilled. For all $k\in \mathbb{N}^{*}$ there exists a constant $C_{L,B}$ (which depends only on $L$ and $B$) such that $\mathbb{E}(\vert Y\vert^{k})\leqslant k!(C_{L,B})^{k}$.
\end{lem}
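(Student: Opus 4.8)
The plan is to control the moments of $Y$ by a uniform bound on the conditional moment generating function of $Y$ given $X$, exploiting the exponential-family structure.

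First I would reduce to a conditional statement. By the tower property $\mathbb{E}(|Y|^{k}) = \mathbb{E}\big(\mathbb{E}(|Y|^{k}\mid X)\big)$, and since (H.1) forces $X \in [-L,L]^{p}$ almost surely, it suffices to bound $\mathbb{E}(|Y|^{k}\mid X=x)$ uniformly over $x \in [-L,L]^{p}$. Write $\theta = \theta(x) := {\beta^{*}}^{T}x$ for the canonical parameter. The first key estimate is that $\theta$ stays in a fixed compact set: by Cauchy--Schwarz on each block, $\|x^{g}\|_{2} \le \sqrt{d_{g}}\,\|x^{g}\|_{\infty} \le \sqrt{d_{g}}\,L$, so
\[
|\theta| \le \sum_{g=1}^{G_{n}} \|{\beta^{*}}^{g}\|_{2}\,\|x^{g}\|_{2} \le L \sum_{g=1}^{G_{n}} \sqrt{d_{g}}\,\|{\beta^{*}}^{g}\|_{2} \le LB,
\]
using (H.3). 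Hence $\theta \in [-LB, LB]$ for every admissible $x$.

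Next I would compute the conditional MGF. Since $Y\mid X=x$ has density $\exp(\theta y - \psi(\theta))$ with respect to $F$, for any $t$ with $\theta + t \in \Theta$ one has $\mathbb{E}(e^{tY}\mid X=x) = \exp(\psi(\theta+t)-\psi(\theta))$. By (H.2) the compact interval $[-LB, LB]$ lies in the open set $\textrm{Int}(\Theta)$, so there is a margin $t_{0}>0$, depending only on $L$, $B$ and $\Theta$, with $[-LB-t_{0},\, LB+t_{0}] \subset \Theta$. On this compact interval $\psi$ is finite and continuous (being convex and finite on $\textrm{Int}(\Theta)$), hence bounded; set $D := 2\sup_{|u|\le LB+t_{0}}|\psi(u)| < \infty$. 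Then, for $|t|\le t_{0}$ and all admissible $x$, $\psi(\theta+t)-\psi(\theta) \le D$, and combining the $\pm t_{0}$ bounds via $e^{t_{0}|y|} \le e^{t_{0}y} + e^{-t_{0}y}$ gives the uniform estimate $\mathbb{E}(e^{t_{0}|Y|}\mid X=x) \le 2e^{D} =: D'$.

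Finally I would extract the factorial moment bound. Expanding $e^{t_{0}|Y|} = \sum_{j\ge 0}(t_{0}|Y|)^{j}/j!$ and keeping the $k$-th nonnegative term yields $\mathbb{E}(|Y|^{k}\mid X=x) \le k!\, t_{0}^{-k}\, \mathbb{E}(e^{t_{0}|Y|}\mid X=x) \le k!\, D'\, t_{0}^{-k}$, uniformly in $x$, and therefore $\mathbb{E}(|Y|^{k}) \le k!\, D'\, t_{0}^{-k}$. Setting $C_{L,B} := (D'\vee 1)/t_{0}$ and using $D' \le (D'\vee 1)^{k}$ for $k\ge 1$ gives the claimed bound $\mathbb{E}(|Y|^{k}) \le k!\,(C_{L,B})^{k}$, with a constant depending only on $L$, $B$ (through the fixed model $F$, $\psi$, $\Theta$). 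The main obstacle is the construction and use of the margin $t_{0}$: one must check that the whole admissible range of the canonical parameter sits strictly inside $\Theta$ with room to spare --- which is exactly what (H.2) combined with the compactness bound $|\theta|\le LB$ provides --- so that the two-sided MGF is finite and uniformly bounded. Everything else is routine.
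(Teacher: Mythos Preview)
Your proof is correct and follows the same overall strategy as the paper --- bound the canonical parameter by $|\theta|\le LB$ via (H.1) and (H.3), then use the exponential-family structure to control the absolute-moment generating function uniformly over this compact range --- but the execution differs. The paper splits $\mathbb{E}(e^{s|Y_\theta|})$ as $(M_{+}(s+\theta)+M_{-}(\theta-s))/M(\theta)$, observes this is analytic in $s$ on a complex neighbourhood of $0$ (since $\theta\in\textrm{Int}(\Theta)$), and then applies the Cauchy integral formula for derivatives to obtain $\mathbb{E}|Y_\theta|^{k}=|H_\theta^{(k)}(0)|\le k!\,R^{-k}\sup_{|z|\le R}|H_\theta(z)|$, finally taking $C_{L,B}=\sup_{|\theta|\le LB}C_\theta$. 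You instead stay on the real line: pick a margin $t_0$ so that $\psi$ is bounded on $[-LB-t_0,LB+t_0]$, bound $\mathbb{E}(e^{\pm t_0 Y}\mid X=x)=\exp(\psi(\theta\pm t_0)-\psi(\theta))$ uniformly, pass to $\mathbb{E}(e^{t_0|Y|}\mid X=x)$ via $e^{t_0|y|}\le e^{t_0 y}+e^{-t_0 y}$, and read off the $k$-th moment from the Taylor expansion. Your argument is more elementary (no complex analysis), while the paper's Cauchy-estimate route is perhaps more systematic and makes the dependence of the constant on the local analyticity radius explicit; both rely on the same underlying fact that an exponential family has an absolute-moment generating function finite near the origin whenever the canonical parameter is interior. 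One shared minor slippage: both your proof and the paper's effectively assume $[-LB,LB]\subset\textrm{Int}(\Theta)$ to make $C_{L,B}$ depend only on $L,B$, which is slightly stronger than the pointwise statement of (H.2); this is harmless in the intended applications (where typically $\Theta=\mathbb{R}$) and is not a defect peculiar to your argument.
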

The boundedness assumption on the components of $X$ is required to prove this lemma. Then, for the non linear part of the empirical process, we use again this assumption to show that we can restrict the study of $\psi$ to a suitable compact set.
Since $\psi$ is lipchitzian on this compact set, concentration results for lipchitzian loss functions (see~\cite{LEDOUX91}) allow to bound the probability of the event $\mathcal{B}$.

Thus, on the event $\mathcal{A}$ which occurs with high probability (see Proposition \ref{prop:probaAetB}), we have an upper bound for the linear part of the empirical process $(\mathbb{P}_{n}-\mathbb{P})\left( l_{l}(\beta^{*})-l_{l}(\hat{\beta}_n)\right)$.
\begin{prop}
\label{prop:upbound1}
On the event $\mathcal{A}$ 
$$(\mathbb{P}_{n}-\mathbb{P})\left( l_{l}(\beta^{*})-l_{l}(\hat{\beta}_n)\right)\leqslant \frac{r_{n}}{2}\sum_{g=1}^{G_{n}}\sqrt{d_{g}}\Vert \hat{\beta}_{n}^{g}-{\beta^{*}}^g\Vert_{2}.$$
\end{prop}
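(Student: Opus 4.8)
The plan is to exploit that the linear part of the loss, $l_{l}(\beta)=-y\beta^{T}x$, is affine in $\beta$, so that the centered empirical process collapses to a single inner product between the estimation error and a centered average. First I would expand
$$l_{l}(\beta^{*};x,y)-l_{l}(\hat{\beta}_n;x,y)=y(\hat{\beta}_n-\beta^{*})^{T}x,$$
and apply $\mathbb{P}_{n}-\mathbb{P}$. By linearity the $\psi$-free difference factors as
$$\left( \mathbb{P}_{n}-\mathbb{P}\right)\left( l_{l}(\beta^{*})-l_{l}(\hat{\beta}_n)\right)=(\hat{\beta}_n-\beta^{*})^{T}\left( \frac{1}{n}\sum_{i=1}^{n} Y_{i}X_{i}-\mathbb{E}(YX)\right).$$
Although $\hat{\beta}_n$ is data dependent, this identity holds pointwise and the bound obtained below is valid for any fixed vector in place of $\hat{\beta}_n-\beta^{*}$, so evaluating it at the realized estimator on the event $\mathcal{A}$ raises no difficulty.

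Next I would decompose the inner product along the $G_{n}$ groups and bound each block by the Cauchy--Schwarz inequality. Writing $W^{g}:=\frac{1}{n}\sum_{i=1}^{n}\left( Y_{i}X_{i}^{g}-\mathbb{E}(YX^{g})\right)$, this gives
$$\left( \mathbb{P}_{n}-\mathbb{P}\right)\left( l_{l}(\beta^{*})-l_{l}(\hat{\beta}_n)\right)=\sum_{g=1}^{G_{n}}(\hat{\beta}_n^{g}-{\beta^{*}}^{g})^{T} W^{g}\leqslant \sum_{g=1}^{G_{n}}\Vert \hat{\beta}_n^{g}-{\beta^{*}}^{g}\Vert_{2}\,\Vert W^{g}\Vert_{2}.$$
The key identification is that, by the very definition of $L_{g}$, one has $\Vert W^{g}\Vert_{2}=\sqrt{d_{g}}\,L_{g}$, which is exactly where the normalization by $\sqrt{d_{g}}n$ in $L_{g}$ pays off.

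Finally, on the event $\mathcal{A}$ we have $L_{g}\leqslant r_{n}/2$ simultaneously for all $g\in\{1,\dots,G_{n}\}$, hence $\Vert W^{g}\Vert_{2}\leqslant \frac{r_{n}}{2}\sqrt{d_{g}}$ for each $g$; substituting into the previous display yields precisely the claimed inequality. There is essentially no obstacle here: the whole argument is a plain Cauchy--Schwarz combined with the definition of $\mathcal{A}$. The only point deserving attention is the first step, since it is the linearity of $l_{l}$ that lets the empirical process factor as an inner product. This is exactly the feature that \emph{fails} for the nonlinear part $l_{\psi}$, whose control instead demands the separate and more delicate concentration argument underlying Proposition \ref{prop:probaAetB}.
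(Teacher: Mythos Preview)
Your proof is correct and follows essentially the same approach as the paper: expand the linear part into a group-wise inner product, apply Cauchy--Schwarz blockwise, and invoke the definition of $\mathcal{A}$ via $L_{g}\leqslant r_{n}/2$. The only differences are cosmetic (your explicit naming of $W^{g}$ and the remark on data dependence), not mathematical.
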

\begin{proof}
We have
$$(\mathbb{P}_{n}-\mathbb{P})\left( l_{l}(\beta^{*})-l_{l}(\hat{\beta}_n)\right)$$
$$=\sum_{g=1}^{G_{n}}(\hat{\beta}_n^{g}-{\beta^{*}}^{g})^{T}\left[ \frac{1}{n}\sum_{i=1}^{n}Y_{i}X_{i}^{g}-\mathbb{E}(YX^{g})\right]  $$
$$\leqslant\sum_{g=1}^{G_{n}}\sqrt{d_{g}}\Vert \hat{\beta}_n^{g}-{\beta^{*}}^{g}\Vert_{2}\left\Vert \frac{1}{\sqrt{d_{g}}n}\sum_{i=1}^{n}\left( Y_{i}X_{i}^{g}-\mathbb{E}(YX^{g})\right) \right\Vert_{2}.$$
The last bound is obtained by using Cauchy-Schwarz inequality. Then on the event $\mathcal{A}$ the proposition follows.
\end{proof}
Therefore the difference between the linear part of the empirical process and its expectation is bounded from above by the tuning parameter multiplied by the norm (associated to the Group Lasso penalty) of the difference between the estimated parameter and the true parameter $\beta^{*}$.
We can state a similar result for the non linear part of the empirical process, the key of the proof is based on the fact that the estimator $\hat{\beta}_n$ is in a neighborhood of the target parameter $\beta^{*}$ on the event $\mathcal{A}\bigcap\mathcal{B}$.
\begin{lem}\label{local}
On the event $\mathcal{A}\bigcap\mathcal{B}$ we have
$ \sum_{g=1}^{G_{n}}\sqrt{d_{g}}\Vert \hat{\beta}_n^{g}-{\beta^{*}}^g\Vert_{2}\leqslant M,$ where we recall that $M=8B+\varepsilon_{n}$ and $\varepsilon_{n}=\frac{1}{n}$.
\end{lem}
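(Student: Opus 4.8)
The plan is to run a localization (convexity) argument. The obstacle to clear at the outset is a circularity: the event $\mathcal{B}$ only controls the nonlinear part of the empirical process over the ball $\{\beta:\sum_{g}\sqrt{d_g}\Vert\beta^g-{\beta^*}^g\Vert_2\le M\}$, whereas what we want to prove is precisely that $\hat\beta_n$ sits in that ball. I would break the circularity by exploiting the convexity of the penalized criterion $\beta\mapsto \mathbb{P}_nl(\beta)+r_n\sum_{g}\sqrt{d_g}\Vert\beta^g\Vert_2$ together with the convexity of $\Lambda$ (which is convex since $\Theta$ is an interval).

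Suppose, for contradiction, that $\sum_g\sqrt{d_g}\Vert\hat\beta_n^g-{\beta^*}^g\Vert_2>M$. Consider the segment $\bar\beta_t=t\hat\beta_n+(1-t)\beta^*\in\Lambda$, $t\in[0,1]$. The map $t\mapsto \sum_g\sqrt{d_g}\Vert\bar\beta_t^g-{\beta^*}^g\Vert_2=t\sum_g\sqrt{d_g}\Vert\hat\beta_n^g-{\beta^*}^g\Vert_2$ is continuous, vanishes at $t=0$ and exceeds $M$ at $t=1$, so there is $t_0\in(0,1)$ with $\sum_g\sqrt{d_g}\Vert\bar\beta_{t_0}^g-{\beta^*}^g\Vert_2=M$. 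Write $\bar\beta:=\bar\beta_{t_0}$. Since $\hat\beta_n$ minimizes the penalized criterion, its value there is at most its value at $\beta^*$; by convexity of that criterion the same inequality then holds at the intermediate point, giving the basic inequality $\mathbb{P}_nl(\bar\beta)+r_n\sum_g\sqrt{d_g}\Vert\bar\beta^g\Vert_2\le \mathbb{P}_nl(\beta^*)+r_n\sum_g\sqrt{d_g}\Vert{\beta^*}^g\Vert_2$, now available at a point lying exactly on the sphere of radius $M$, hence inside the ball appearing in $\mathcal{B}$.

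Next I would split $\mathbb{P}_nl(\bar\beta)-\mathbb{P}_nl(\beta^*)$ into the deterministic excess risk $\mathbb{P}l(\bar\beta)-\mathbb{P}l(\beta^*)$, which is nonnegative because $\beta^*=\mathrm{argmin}_{\beta\in\Lambda}\mathbb{P}l(\beta)$, plus the centered empirical process $(\mathbb{P}_n-\mathbb{P})\bigl(l(\bar\beta)-l(\beta^*)\bigr)$. The latter I would decompose into its linear and nonlinear parts: on $\mathcal{A}$ the linear part is controlled by the Cauchy--Schwarz bound of Proposition \ref{prop:upbound1} (valid for any $\beta$ once $L_g\le r_n/2$) by $\tfrac{r_n}{2}\sum_g\sqrt{d_g}\Vert\bar\beta^g-{\beta^*}^g\Vert_2=\tfrac{r_n}{2}M$, while on $\mathcal{B}$ the nonlinear part is controlled, since $\bar\beta$ lies in the ball, by $\tfrac{r_n}{2}\bigl(\sum_g\sqrt{d_g}\Vert\bar\beta^g-{\beta^*}^g\Vert_2+\varepsilon_n\bigr)=\tfrac{r_n}{2}(M+\varepsilon_n)$. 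I would then discard the nonnegative excess risk, invoke (H.3) to bound $\sum_g\sqrt{d_g}\Vert{\beta^*}^g\Vert_2\le B$, and use the triangle inequality $\sum_g\sqrt{d_g}\Vert\bar\beta^g-{\beta^*}^g\Vert_2\le \sum_g\sqrt{d_g}\Vert\bar\beta^g\Vert_2+\sum_g\sqrt{d_g}\Vert{\beta^*}^g\Vert_2$ to convert the basic inequality into an upper bound on $M=\sum_g\sqrt{d_g}\Vert\bar\beta^g-{\beta^*}^g\Vert_2$ expressed only in terms of $B$ and $\varepsilon_n$. The generous choice $M=8B+\varepsilon_n$ is made so that this upper bound is strictly smaller than $M$, contradicting $\sum_g\sqrt{d_g}\Vert\bar\beta^g-{\beta^*}^g\Vert_2=M$; the assumption is therefore impossible and $\sum_g\sqrt{d_g}\Vert\hat\beta_n^g-{\beta^*}^g\Vert_2\le M$.

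The delicate point, and the place where I expect to have to work carefully, is exactly this last accounting. The linear and nonlinear fluctuations each contribute a term of order $r_n M$, so unless the penalty difference (ideally refined through the split over $H^*$ and ${H^*}^c$) and the factors $1/2$ built into the thresholds of $\mathcal{A}$ and $\mathcal{B}$ are tracked precisely, and the nonnegativity of the excess risk is used to drop the favorable convexity contribution, the $M$ appearing on both sides threatens to cancel and leave a vacuous inequality. Forcing the derived bound strictly below the radius is what pins down the constant in $M=8B+\varepsilon_n$; it is also why this radius resurfaces downstream, for instance in $\kappa_n=17B+\tfrac{2}{n}$, once $\psi$ is later restricted to the compact set fixed by the size of this neighborhood.
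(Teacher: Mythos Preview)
Your approach is the paper's approach: localize via convexity by passing to an intermediate point $\bar\beta$ on the segment $[\beta^*,\hat\beta_n]$ that lies in the ball of radius $M$, apply the basic inequality there, control the empirical process on $\mathcal A\cap\mathcal B$, drop the nonnegative excess risk, and bound using (H.3). The paper does this directly rather than by contradiction (it sets $t=M/(M+\sum_g\sqrt{d_g}\Vert\hat\beta_n^g-{\beta^*}^g\Vert_2)$, shows $\sum_g\sqrt{d_g}\Vert\tilde\beta^g-{\beta^*}^g\Vert_2\le M/2$, and inverts), but that is cosmetic.

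The worry you flag in your last paragraph is an artifact of a slip: you wrote the penalty as $r_n\sum_g\sqrt{d_g}\Vert\beta^g\Vert_2$, but in the paper it is $2r_n\sum_g\sqrt{d_g}\Vert\beta^g\Vert_2$ (see (\ref{eq debut-gpl})). With the factor $2$, the accounting closes immediately and no $H^*/{H^*}^c$ split is needed. Indeed, dropping the excess risk and bounding the empirical process on $\mathcal A\cap\mathcal B$ gives
\[
2\sum_g\sqrt{d_g}\Vert\bar\beta^g\Vert_2\;\le\;\sum_g\sqrt{d_g}\Vert\bar\beta^g-{\beta^*}^g\Vert_2+\tfrac{\varepsilon_n}{2}+2\sum_g\sqrt{d_g}\Vert{\beta^*}^g\Vert_2,
\]
and then the triangle inequality $2\sum_g\sqrt{d_g}\Vert\bar\beta^g-{\beta^*}^g\Vert_2\le 2\sum_g\sqrt{d_g}\Vert\bar\beta^g\Vert_2+2B$ together with (H.3) yields
\[
\sum_g\sqrt{d_g}\Vert\bar\beta^g-{\beta^*}^g\Vert_2\;\le\;4B+\tfrac{\varepsilon_n}{2}\;=\;\tfrac{M}{2}\;<\;M,
\]
contradicting your choice of $\bar\beta$ on the sphere of radius $M$. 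With only $r_n$ in the penalty, the $M$'s on both sides cancel and the inequality is vacuous, exactly as you feared.
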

Then the next proposition provides an upper bound for $(\mathbb{P}_{n}-\mathbb{P})\left( l_{\psi}(\beta^{*})-l_{\psi}(\hat{\beta}_n)\right)$ and is directly involved by the definition of $\mathcal{B}$ and Lemma~\ref{local}.
\begin{prop}\label{prop:upbound2}
On the event $\mathcal{A}\cap \mathcal{B}$  
$$(\mathbb{P}_{n}-\mathbb{P})\left( l_{\psi}(\beta^{*})-l_{\psi}(\hat{\beta}_{n})\right)$$
$$\leqslant \dfrac{r_{n}}{2}\left( \sum_{g=1}^{G_{n}}\sqrt{d_{g}}\Vert \hat{\beta}_{n}^{g}-{\beta^{*}}^g\Vert_{2} +\varepsilon_{n}\right).  $$
\end{prop}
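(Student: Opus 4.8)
The plan is to read the bound off directly from the definition of the event $\mathcal{B}$, after checking that the estimator $\hat{\beta}_n$ is an admissible point in the supremum that defines $\mathcal{B}$. Recall that on $\mathcal{B}$ the quantity $\lvert\nu_n(\beta,\beta^{*})\rvert$ is controlled by $r_n/2$ \emph{uniformly} over every $\beta$ satisfying $\sum_{g=1}^{G_n}\sqrt{d_g}\Vert\beta^{g}-{\beta^{*}}^g\Vert_2\leqslant M$. The whole point of the argument is that this uniform control can be transferred to the random parameter $\hat{\beta}_n$.

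First I would invoke Lemma~\ref{local}, which guarantees that on $\mathcal{A}\cap\mathcal{B}$ one has $\sum_{g=1}^{G_n}\sqrt{d_g}\Vert\hat{\beta}_n^{g}-{\beta^{*}}^g\Vert_2\leqslant M$; in other words $\hat{\beta}_n$ lies in the feasible ball over which the supremum in $\mathcal{B}$ is taken. Consequently, on $\mathcal{A}\cap\mathcal{B}$, the value of $\lvert\nu_n\rvert$ at the particular point $\beta=\hat{\beta}_n$ is dominated by the supremum, so $\lvert\nu_n(\hat{\beta}_n,\beta^{*})\rvert\leqslant r_n/2$. Written out, this reads
$$\left\lvert\dfrac{(\mathbb{P}_{n}-\mathbb{P})\left(l_{\psi}(\beta^{*})-l_{\psi}(\hat{\beta}_n)\right)}{\sum_{g=1}^{G_n}\sqrt{d_g}\Vert\hat{\beta}_n^{g}-{\beta^{*}}^g\Vert_2+\varepsilon_n}\right\rvert\leqslant\dfrac{r_n}{2}.$$
Because $\varepsilon_n=\tfrac{1}{n}>0$, the denominator is strictly positive (even when $\hat{\beta}_n=\beta^{*}$), so I may multiply through by it without any case distinction, and then discard the absolute value via the elementary bound $z\leqslant\lvert z\rvert$. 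This yields exactly the stated inequality.

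I expect no serious difficulty here, since both ingredients are already in place. The only point deserving care --- and the reason the event $\mathcal{B}$ is phrased as a supremum over a deterministic ball rather than as a pointwise estimate --- is that $\hat{\beta}_n$ is random, so one cannot simply evaluate a pointwise concentration bound at it. The uniform formulation, together with the localization of Lemma~\ref{local}, is precisely what licenses the substitution $\beta\mapsto\hat{\beta}_n$. Thus the substantive content of the proposition is carried entirely by Lemma~\ref{local}, and the remaining steps are purely algebraic.
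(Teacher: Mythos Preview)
Your proposal is correct and matches the paper's approach exactly: the paper states that the proposition ``is directly involved by the definition of $\mathcal{B}$ and Lemma~\ref{local}'', which is precisely the two-step argument you give (localize $\hat{\beta}_n$ into the ball via Lemma~\ref{local}, then read off the bound from the supremum defining $\mathcal{B}$). Your additional remarks on why the uniform supremum formulation is needed for the random $\hat{\beta}_n$ are accurate and make explicit what the paper leaves implicit.
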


Once concentration of the loss function around its mean is stated, we have to ensure that the loss function is not too flat in such a way that if the loss difference $l(\hat{\beta}_{n})-l(\beta^{*})$ converges to zero then $\hat{\beta}_{n}$ converges to $\beta^{*}$. In this paper such a property holds assuming that the covariance matrix satisfies a Group Stabil condition (see condition below). This condition is closely related to the one of Negahban et al. \cite{NEG12} called Restricted Strong Convexity property. Notice that in our analysis the boundedness of the covariates is required to prove such a property. In fact, thanks to the boundedness of the covariates, we first bound from below the mean deviation of the loss function by a quadratic function (see Proposition \ref{prop:lowbound} in Appendix A). This first step enables to relate the deviation in the loss to the deviation of the estimated parameter from the true one. Then the  Group Stabil condition implies that the loss function satisfies a local strong convexity property. This has been characterized as a common step for convergence of $M$-estimator (see \cite{NEG12}).
Notice that the boundedness assumption on the components of $X$ is not required to obtain a kind of strong convexity. For example, as stated by Negahban et al., if the covariates have sub-gaussian tails and if the covariance matrix is positive definite then the loss function satisfies a form of restricted strong convexity property with high probability. However, as noticed above the boundedness assumption is necessary to establish Proposition \ref{prop:probaAetB}. 

Therefore the key condition to derive oracle inequalities rests on the correlation between the covariates i.e. on the behavior of the Gram matrix $\frac{1}{n}\sum_{i=1}^{n}X_{i}X_{i}^{T}$ which is necessarily singular when $p>n$. Meier, van de Geer and B{\"u}hlmann \cite{MEIR08} proved that the group Lasso is consistent in the particular case of logistic regression and gave bounds for the prediction error under the assumption that $\mathbb{E}(XX^{T})$ is non singular. In this paper we give sharp bounds for estimation and prediction errors for generalized linear models using a weaker condition similar to the restricted eigenvalue condition (RE) of Bickel, Ritov and Tsybakov \cite {BIC09}. This condition is quite weaker than the one of Bunea, Tsybakov and Wegkamp \cite{BUN07}. In their article  Bickel, Ritov and Tsybakov also give several sufficient conditions for RE (which are easier to check). Here we use a condition which is a group version of the Stabil Condition first introduced by Bunea \cite {BUN08} for logistic regression in the case of an $\ell_{1}$ penalty. This condition is similar (within a constant $\varepsilon$) to the condition used by Lounici, Pontil, van de Geer and Tsybakov \cite{LOU11} to state oracle inequalities for linear regression.
For $c_{0}$, $\varepsilon>0$, we define the restricted set as
$$
S(c_{0},\varepsilon)=\left\lbrace \delta: \sum_{g \in {H^{*}}^{c} }\sqrt{d_{g}}\Vert \delta^{g}\Vert_{2}\leqslant c_{0}\sum_{g \in H^{*} }\sqrt{d_{g}}\Vert \delta^{g}\Vert_{2}+\varepsilon \right\rbrace. 
$$

On this set we assume that the covariance matrix satisfies the Group Stabil condition defined below. This condition ensures local stong convexity in a neighborhood of $\beta^{*}$.

Let $\Sigma:=\mathbb{E}(XX^{T})$ be the $p \times p$  covariance matrix.
\begin{definition}: {\bf Group Stabil Condition}\\
Let $c_{0},\varepsilon >0$ be given. $\Sigma$ satisfies the Group Stabil condition $GS(c_{0},\varepsilon,k)$ if there exists $0<k<1$ such that 
 $$ \delta^{T} \Sigma \delta\geqslant k\sum_{g \in H^{*} }\Vert \delta^{g}\Vert_{2}^{2}-\varepsilon $$ for any $\delta \in S(c_{0},\varepsilon)$.
\end{definition}

Before going any further we have to define two norms that are used to control the estimation error. For all $z \in \mathbb{R}^p$, let
$$\Vert z\Vert_{R}:=\sum_{g=1}^{G_{n}}\sqrt{d_{g}}\Vert z^g \Vert_{2}$$
and 
$$\Vert z\Vert_{2,1}:=\sum_{g=1}^{G_{n}}\Vert z^g \Vert_{2}.$$
The norm $\Vert .\Vert_{R}$ is called the regularizer norm.

We are now able to state the main result of this paper which provides meaningful bounds for the estimation and prediction error when the true model is sparse and 
$\log(G_{n})$ is small as compared to $n$.

Let $\gamma^{*}:=\sum_{g\in H^{*}}d_{g}$. We recall that $\kappa_{n}:=17B+\frac{2}{n}$.
\begin{theorem}
Assume condition $GS(3,\frac{1}{2n},k)$ is fulfilled. Let $$r_{n}\geqslant AKL \left\lbrace  C_{L,B}\vee\underset{\left\lbrace\vert x\vert\leqslant L\kappa_{n}\right\rbrace\cap \Theta}{\textrm{max}}\vert \psi^{'}(x)\vert)\right\rbrace \sqrt{\dfrac{\log(2G_{n})}{n}}$$ where $A>\sqrt{2}$, $K$ is a universal constant and $C_{L,B}$ is defined in Lemma \ref{lem-moment}. Then, with probability at least $1-(C+2d_{\textrm{max}})(2G_{n})^{-A^{2}/2}$ (where $C$ is given in Proposition \ref{prop:probaAetB}), we have

$$\Vert \hat{\beta}_n-{\beta^{*}} \Vert_{R}\leqslant \dfrac{4}{c_{n}k}r_{n}\gamma^{*}+ \left( 1+\frac{1}{r_{n}}\right) \frac{1}{2n},$$

$$\Vert \hat{\beta}_n-{\beta^{*}} \Vert_{2,1}\leqslant \dfrac{4}{c_{n}k\sqrt{d_{\textrm{min}}}}r_{n}\gamma^{*}+ \left( 1+\frac{1}{r_{n}}\right) \frac{1}{2n\sqrt{d_{\textrm{min}}}}$$
and
$$\mathbb{E}\left( \hat{\beta}_n^{T}X-{\beta^{*}}^{T}X\right) ^{2}\leqslant \dfrac{16}{c_{n}^{2}k}r_{n}^{2}\gamma^{*}+ \dfrac{2r_{n}+1}{2c_{n}n}. $$
where $$c_{n}:=\underset{\left\lbrace\vert x\vert\leqslant L(9B+\frac{1}{n})\right\rbrace\cap\Theta }{\textrm{min}}\left\lbrace  \frac{\psi^{''}(x)}{2}\right\rbrace. $$
\label{theo-gp-glm}
\end{theorem}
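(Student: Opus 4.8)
The plan is to follow the classical route for $\ell_1$-type oracle inequalities (as in Bickel--Ritov--Tsybakov \cite{BIC09} and Lounici et al. \cite{LOU11}), transported to the GLM loss by means of the two analytic ingredients already assembled in the excerpt: the control of the empirical process on $\mathcal{A}\cap\mathcal{B}$ and the quadratic lower bound on the excess risk. Throughout I write $\delta:=\hat{\beta}_n-\beta^*$ and let $\bar{l}:=\mathbb{P}\left(l(\hat{\beta}_n)-l(\beta^*)\right)\geqslant 0$ denote the excess risk, which is nonnegative because $\beta^*$ minimises $\mathbb{P}l$. The whole argument is carried out on the event $\mathcal{A}\cap\mathcal{B}$, which by Proposition~\ref{prop:probaAetB} carries the probability announced in the theorem.

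First I would record the basic inequality stemming from the fact that $\hat{\beta}_n$ minimises the penalised empirical risk, namely $\mathbb{P}_n l(\hat{\beta}_n)+r_n\Vert\hat{\beta}_n\Vert_R\leqslant\mathbb{P}_n l(\beta^*)+r_n\Vert\beta^*\Vert_R$. Adding and subtracting $\mathbb{P}\left(l(\hat{\beta}_n)-l(\beta^*)\right)$ recasts this as
$$\bar{l}+r_n\Vert\hat{\beta}_n\Vert_R\leqslant(\mathbb{P}-\mathbb{P}_n)\left(l(\hat{\beta}_n)-l(\beta^*)\right)+r_n\Vert\beta^*\Vert_R.$$
The empirical process on the right splits into its linear and nonlinear parts. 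On $\mathcal{A}$, Proposition~\ref{prop:upbound1} bounds the linear part by $\tfrac{r_n}{2}\Vert\delta\Vert_R$; on $\mathcal{A}\cap\mathcal{B}$, Proposition~\ref{prop:upbound2} (whose applicability rests on the neighbourhood provided by Lemma~\ref{local}) bounds the nonlinear part by $\tfrac{r_n}{2}\left(\Vert\delta\Vert_R+\varepsilon_n\right)$.

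Next I would decompose each regularizer norm over $H^*$ and ${H^*}^c$, recalling that ${\beta^*}^g=0$ off $H^*$, so that by the triangle inequality $\Vert\beta^*\Vert_R-\Vert\hat{\beta}_n\Vert_R\leqslant\sum_{g\in H^*}\sqrt{d_g}\Vert\delta^g\Vert_2-\sum_{g\in{H^*}^c}\sqrt{d_g}\Vert\delta^g\Vert_2$. Inserting this and the empirical-process bounds into the basic inequality and invoking $\bar{l}\geqslant 0$ makes the $\sum_{g\in{H^*}^c}$ contribution partially cancel, which yields, after inserting $\varepsilon_n=\tfrac1n$, the cone membership $\delta\in S(3,\tfrac{1}{2n})$, that is $\sum_{g\in{H^*}^c}\sqrt{d_g}\Vert\delta^g\Vert_2\leqslant 3\sum_{g\in H^*}\sqrt{d_g}\Vert\delta^g\Vert_2+\tfrac{1}{2n}$; the same computation, keeping $\bar{l}$ on the left, furnishes the excess-risk control $\bar{l}\leqslant (\mathrm{const})\,r_n\sum_{g\in H^*}\sqrt{d_g}\Vert\delta^g\Vert_2+(\text{lower order})$. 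The cone is exactly the restricted set on which $GS(3,\tfrac{1}{2n},k)$ is assumed, which is why the hypothesis is stated with those parameters.

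Finally I would activate local strong convexity. Since Lemma~\ref{local} confines $\hat{\beta}_n$ to the $\Vert\cdot\Vert_R$-ball of radius $M=8B+\tfrac1n$ around $\beta^*$, the canonical parameter $\hat{\beta}_n^{T}X$ stays in $\{|x|\leqslant L(9B+\tfrac1n)\}$, so Proposition~\ref{prop:lowbound} gives $\bar{l}\geqslant c_n\,\delta^{T}\Sigma\delta=c_n\,\mathbb{E}(\hat{\beta}_n^{T}X-{\beta^*}^{T}X)^2$, and $GS(3,\tfrac{1}{2n},k)$ converts $\delta^{T}\Sigma\delta$ into $k\sum_{g\in H^*}\Vert\delta^g\Vert_2^2-\tfrac{1}{2n}$. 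Feeding the Cauchy--Schwarz inequality $\sum_{g\in H^*}\sqrt{d_g}\Vert\delta^g\Vert_2\leqslant\sqrt{\gamma^*}\Big(\sum_{g\in H^*}\Vert\delta^g\Vert_2^2\Big)^{1/2}$ into the two-sided control of $\bar{l}$ produces a quadratic inequality in $\big(\sum_{g\in H^*}\Vert\delta^g\Vert_2^2\big)^{1/2}$, whose resolution bounds $\sum_{g\in H^*}\sqrt{d_g}\Vert\delta^g\Vert_2$ by a quantity of order $r_n\gamma^*/(c_n k)$. The three conclusions then follow: the $\Vert\cdot\Vert_R$ bound by reinserting this into the cone inequality, the $\Vert\cdot\Vert_{2,1}$ bound from $\sqrt{d_{\textrm{min}}}\,\Vert z\Vert_{2,1}\leqslant\Vert z\Vert_R$, and the prediction bound from $\mathbb{E}(\hat{\beta}_n^{T}X-{\beta^*}^{T}X)^2=\delta^{T}\Sigma\delta\leqslant\bar{l}/c_n$. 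I expect the main obstacle to be twofold: conceptually, the interplay between Lemma~\ref{local} and Proposition~\ref{prop:lowbound}, since the quadratic lower bound is only local and the constant $c_n$ is calibrated precisely to the neighbourhood that Lemma~\ref{local} guarantees; and technically, the careful bookkeeping of the slack terms $\varepsilon_n=\tfrac1n$ and $\varepsilon=\tfrac{1}{2n}$ through the quadratic, so that the additive residuals collapse to the stated $\left(1+\tfrac{1}{r_n}\right)\tfrac{1}{2n}$ and $\tfrac{2r_n+1}{2c_n n}$.
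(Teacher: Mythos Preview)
Your outline matches the paper's proof step for step: basic inequality, control of the empirical process on $\mathcal{A}\cap\mathcal{B}$ via Propositions~\ref{prop:upbound1} and~\ref{prop:upbound2}, cone membership $\delta\in S(3,\tfrac{1}{2n})$, the quadratic lower bound of Proposition~\ref{prop:lowbound}, then $GS$ together with Cauchy--Schwarz and Young's inequality $2xy\leqslant tx^2+y^2/t$ at $t=1/(c_nk)$.

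One bookkeeping point to fix: the penalty coefficient in the paper is $2r_n$, not $r_n$ (the displayed definition of $\hat{\beta}_n$ omits it, but see~\eqref{eq debut-gpl} in the appendix). This factor matters: with only $r_n$ in front of $\Vert\cdot\Vert_R$, the empirical-process bound $r_n\Vert\delta\Vert_R+\tfrac{r_n}{2}\varepsilon_n$ and the penalty contribution $r_n(\Vert\beta^*\Vert_R-\Vert\hat{\beta}_n\Vert_R)\leqslant r_n\bigl(\sum_{H^*}-\sum_{{H^*}^c}\bigr)$ make the $\sum_{{H^*}^c}$ terms cancel exactly, and no cone constraint survives. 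The paper instead adds $r_n\Vert\delta\Vert_R$ to both sides of~\eqref{eq-principale-gpl} after bounding the empirical process, which with the $2r_n$ penalty produces $r_n\Vert\delta\Vert_R+\bar{l}\leqslant 4r_n\sum_{g\in H^*}\sqrt{d_g}\Vert\delta^g\Vert_2+\tfrac{r_n}{2}\varepsilon_n$ and hence both the cone $S(3,\tfrac{1}{2n})$ and the $\Vert\cdot\Vert_R$ bound in one stroke. Once you restore the $2r_n$, your argument and the paper's are identical.
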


Notice that $c_{n}>0$ since the measure associated to the distribution $F$ is not concentrated on a point.  

These results are similar to those of Nardi and Rinaldino \cite{RIN08} who proved asymptotic properties of the Group Lasso estimator for linear models. 
We can notice that if $\gamma^{*}=O(1)$ then the bound on the estimation error is of the order $O\left( \sqrt{ \frac{\log G_{n} }{n}}\right) $ and the Group Lasso estimator still remains consistent for the $\ell_{2,1}$-estimation error and for the $\ell_{2}$-prediction error under the Group Stabil condition if the number of groups increases almost as fast as $O(\exp(n))$. The term  $\sqrt{\log G_{n}}$ is the price to pay for having a large number of factors and not knowing where are the non zero ones. 

Since $\Vert \hat{\beta}_n-{\beta^{*}} \Vert_{2}\leq\Vert \hat{\beta}_n-{\beta^{*}} \Vert_{2,1}$, we also have 
$$\Vert \hat{\beta}_n-{\beta^{*}} \Vert_{2}\leqslant \dfrac{4}{c_{n}k\sqrt{d_{\textrm{min}}}}r_{n}\gamma^{*}+ \left( 1+\frac{1}{r_{n}}\right) \frac{1}{2n\sqrt{d_{\textrm{min}}}}.$$
However, a sharper bound for the $l_{2}$-norm of the estimation error holds but under a stronger assumption than $GS(3,\frac{1}{2n},k)$.
\begin{theorem}\label{theo-gp-glm-norm2}
Assume $GS(2m^{*},3,\frac{1}{2n},k^{'})$ i.e there exists $0<k^{'}<1$ such that 
 $$ \delta^{T}\Sigma \delta\geqslant k^{'}\sum_{g \in J}\Vert \delta^{g}\Vert_{2}^{2}- \frac{1}{2n}$$ for any $\delta $ such that $\sum_{g \in {J}^{c} }\sqrt{d_{g}}\Vert \delta^{g}\Vert_{2}\leqslant 3\sum_{g \in J }\sqrt{d_{g}}\Vert \delta^{g}\Vert_{2}+\frac{1}{2n} $ and $J$ such that $\vert J \vert \leq 2m^{*}$.
Then we have with probability at least $1-(C+2d_{\textrm{max}})(2G_{n})^{-A^{2}/2}$
$$\Vert \hat{\beta}_n-{\beta^{*}} \Vert_{2}^{2}\leqslant 10\dfrac{ d_{\textrm{max}}}{d_{\textrm{min}}}\left\lbrace \dfrac{ 16}{{k^{'}}^{2}c_{n}^{2}}r_{n}^{2}\gamma^{*}+ \frac{2r_{n}+1}{2k^{'}c_{n}n}+\dfrac{1}{2k^{'}n}\right\rbrace .$$
\end{theorem}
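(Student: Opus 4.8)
The plan is to control $\Vert \hat{\beta}_n - \beta^* \Vert_2$ by combining the prediction bound already obtained in Theorem \ref{theo-gp-glm} with a sorting (or ``peeling'') argument that replaces the support $H^*$ by a slightly larger set of groups of cardinality at most $2m^*$; this is precisely why the stronger condition $GS(2m^*,3,\frac{1}{2n},k')$ is needed. Write $\delta := \hat{\beta}_n - \beta^*$. First I would recall from the proof of Theorem \ref{theo-gp-glm} that, on the event $\mathcal{A}\cap\mathcal{B}$ (of probability at least $1-(C+2d_{\textrm{max}})(2G_n)^{-A^2/2}$), the basic inequality forces $\delta$ into the restricted set $S(3,\frac{1}{2n})$, i.e. $\sum_{g\in {H^*}^c}\sqrt{d_g}\Vert\delta^g\Vert_2 \le 3\sum_{g\in H^*}\sqrt{d_g}\Vert\delta^g\Vert_2 + \frac{1}{2n}$. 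Note also that $GS(2m^*,3,\frac{1}{2n},k')$ specializes, by taking the index set equal to $H^*$ (of size $m^*\le 2m^*$), to the condition $GS(3,\frac{1}{2n},k')$, so the prediction bound of Theorem \ref{theo-gp-glm} applies verbatim with $k$ replaced by $k'$, giving $\delta^T\Sigma\delta = \mathbb{E}(\hat{\beta}_n^T X - {\beta^*}^T X)^2 \le \frac{16}{c_n^2 k'}r_n^2\gamma^* + \frac{2r_n+1}{2c_n n}$.

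Second, I would perform the sorting step. Order the groups of ${H^*}^c$ by non-increasing $\Vert\delta^g\Vert_2$, split them into successive blocks $J_1,J_2,\dots$ each of cardinality $m^*$ (the last possibly smaller), and set $J := H^*\cup J_1$, so that $\vert J\vert\le 2m^*$. Since every $\Vert\delta^g\Vert_2$ with $g\in J_{k+1}$ is dominated by the average over $J_k$, one gets $\Vert\delta_{J_{k+1}}\Vert_2 \le \frac{1}{\sqrt{m^*}}\sum_{g\in J_k}\Vert\delta^g\Vert_2$, and summing the squares yields the tail bound $\sum_{g\in J^c}\Vert\delta^g\Vert_2^2 \le \frac{1}{m^*}\big(\sum_{g\in {H^*}^c}\Vert\delta^g\Vert_2\big)^2$. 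Passing between $\Vert\delta^g\Vert_2$ and $\sqrt{d_g}\Vert\delta^g\Vert_2$ through $\sqrt{d_{\textrm{min}}}\le\sqrt{d_g}\le\sqrt{d_{\textrm{max}}}$, then invoking the cone inequality together with $\sum_{g\in H^*}\Vert\delta^g\Vert_2 \le \sqrt{m^*}\,\Vert\delta_{H^*}\Vert_2\le\sqrt{m^*}\,\Vert\delta_J\Vert_2$, the tail is controlled by $\frac{d_{\textrm{max}}}{d_{\textrm{min}}}\Vert\delta_J\Vert_2^2$ up to a term of order $n^{-2}$. Adding $\Vert\delta_J\Vert_2^2$ and folding in the lower-order pieces produces $\Vert\delta\Vert_2^2 \le 10\frac{d_{\textrm{max}}}{d_{\textrm{min}}}\Vert\delta_J\Vert_2^2 + (\text{terms of order } n^{-1})$.

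Third, I would close the loop with the Group Stabil condition. Since $J\supseteq H^*$, one checks immediately that $\delta$ still satisfies the cone inequality relative to $J$ (because $J^c\subseteq {H^*}^c$), so $GS(2m^*,3,\frac{1}{2n},k')$ applied with this $J$ of size at most $2m^*$ gives $\sum_{g\in J}\Vert\delta^g\Vert_2^2 \le \frac{1}{k'}\big(\delta^T\Sigma\delta + \frac{1}{2n}\big)$. Substituting the prediction bound from the first step yields exactly $\Vert\delta_J\Vert_2^2 \le \frac{16}{{k'}^2 c_n^2}r_n^2\gamma^* + \frac{2r_n+1}{2k'c_n n} + \frac{1}{2k'n}$, the expression inside the braces of the statement. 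Plugging this into the inequality from the sorting step and absorbing the residual $n^{-1}$ and $n^{-2}$ contributions into the displayed $n^{-1}$ terms gives the claimed bound.

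The main obstacle I expect is the bookkeeping in the sorting step: controlling the tail $\sum_{g\in J^c}\Vert\delta^g\Vert_2^2$ while tracking the weights $\sqrt{d_g}$ (which is where the factor $\frac{d_{\textrm{max}}}{d_{\textrm{min}}}$ is born) and ensuring that the additive $\frac{1}{2n}$ coming from the restricted set only generates terms of order $n^{-1}$, so that the leading constant collapses to the stated $10\frac{d_{\textrm{max}}}{d_{\textrm{min}}}$. Everything else, namely the specialization of $GS(2m^*,3,\frac{1}{2n},k')$ to the support and the reuse of the prediction bound, is essentially a matter of verifying that the hypotheses of Theorem \ref{theo-gp-glm} remain in force with $k'$ in place of $k$.
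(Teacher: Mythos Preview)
Your proposal is correct and follows essentially the same route as the paper: enlarge $H^*$ to a set $J$ of cardinality at most $2m^*$ by adjoining the $m^*$ ``largest'' groups from ${H^*}^c$, bound the tail $\sum_{g\in J^c}\Vert\delta^g\Vert_2^2$ by a peeling argument, apply $GS(2m^*,3,\tfrac{1}{2n},k')$ on $J$ to control $\sum_{g\in J}\Vert\delta^g\Vert_2^2$ by the quadratic form $\delta^T\Sigma\delta$, and finish by inserting the prediction bound of Theorem~\ref{theo-gp-glm} with $k'$ in place of $k$.

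The only noteworthy difference is in the sorting criterion. The paper (citing \cite{LOU11}, Theorem~3.1) ranks the groups of ${H^*}^c$ by the \emph{weighted} quantity $\sqrt{d_g}\Vert\delta^g\Vert_2$, not by $\Vert\delta^g\Vert_2$ as you propose. With the weighted ordering the peeling step delivers the clean inequality $\sum_{g\in J^c}\Vert\delta^g\Vert_2^2\le 9\,\tfrac{d_{\max}}{d_{\min}}\sum_{g\in J}\Vert\delta^g\Vert_2^2$ directly, and the additive $\tfrac{1}{2n}$ from the cone constraint never enters the tail bound; this is why the paper's final display has no residual $n^{-1}$ terms outside the braces. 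Your unweighted ordering forces you to convert back and forth between $\Vert\delta^g\Vert_2$ and $\sqrt{d_g}\Vert\delta^g\Vert_2$, which is exactly the bookkeeping you flag as the main obstacle and the source of the stray $n^{-1}$, $n^{-2}$ pieces you plan to absorb. Switching to the weighted sort removes that nuisance and makes the constant $10\,\tfrac{d_{\max}}{d_{\min}}$ fall out without any absorption step.
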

\begin{proof}
Let $J^{*}=H^{*} \cup I$ and $I$ is the set of indice corresponding to the $m^{*}$ largest values of $\sqrt{d_{g}}\Vert  \hat{\beta}_n^{g}-{\beta^{*}}^g  \Vert_{2}$ in ${H^{*}}^{c}$.
We can prove (see proof of Theorem 3.1 in \cite{LOU11} with $\lambda_{g}=r_{n}\sqrt{d_{g}}$ ) that
\begin{equation}
\label{eq:norm-l2-GS(2m)-0}
\sum_{g \in {J^{*}}^{c}}\Vert \hat{\beta}_n^{g}-{\beta^{*}}^g\Vert_{2}^{2} \leq 9\dfrac{d_{\textrm{max}}}{d_{\textrm{min}}}\sum_{g \in J^{*}}\Vert \hat{\beta}_n^{g}-{\beta^{*}}^g\Vert_{2}^{2}
\end{equation}
and $$\sum_{g \in {J^{*}}^{c} }\sqrt{d_{g}}\Vert \hat{\beta}_n^{g}-{\beta^{*}}^g\Vert_{2}\leqslant 3\sum_{g \in J^{*} }\sqrt{d_{g}}\Vert \hat{\beta}_n^{g}-{\beta^{*}}^g\Vert_{2}+\frac{1}{2n} .$$ 
Therefore on one hand, using assumption $GS(2m^{*},3,\frac{1}{2n},k^{'})$, we deduce
\begin{equation}\label{eq:norm-l2-GS(2m)-1}
k^{'}\sum_{g \in J^{*}}\Vert \hat{\beta}_n^{g}-{\beta^{*}}^g\Vert_{2}^{2}\leq  \mathbb{E}\left( \hat{\beta}_n^{T}X-{\beta^{*}}^{T}X\right) ^{2}+ \dfrac{1}{2n}
\end{equation}
and on the other hand, by the same arguments as those used in the proof of Theorem \ref{theo-gp-glm} to state Equation (\ref{eq:predict-error}), we have 
 \begin{equation}
\label{eq:norm-l2-GS(2m)-2}
\mathbb{E}\left( \hat{\beta}_n^{T}X-{\beta^{*}}^{T}X\right) ^{2}\leqslant \dfrac{16}{c_{n}^{2}k^{'}}r_{n}^{2}\gamma^{*}+ \dfrac{2r_{n}+1}{2nc_{n}}. 
\end{equation}
From equation (\ref{eq:norm-l2-GS(2m)-1}) and equation (\ref{eq:norm-l2-GS(2m)-2}) we conclude
 \begin{equation}
\label{eq:norm-l2-GS(2m)-3}
\sum_{g \in J^{*}}\Vert \hat{\beta}_n^{g}-{\beta^{*}}^g\Vert_{2}^{2}\leqslant \dfrac{16}{{k^{'}}^{2}c_{n}^{2}}r_{n}^{2}\gamma^{*}+ \dfrac{2r_{n}+1}{2k^{'}c_{n}n}+\dfrac{1}{2k^{'}n}. 
\end{equation}
Finally inequalities (\ref{eq:norm-l2-GS(2m)-0}) and (\ref{eq:norm-l2-GS(2m)-3}) conclude the proof.
\end{proof}

The convergences rates obtained in Theorem \ref{theo-gp-glm} and Theorem \ref{theo-gp-glm-norm2} are exactly of the same order as the ones stated by Lounici and al. \cite{LOU11} for the Group Lasso in a Gaussian setting. The oracle inequality stated in Theorem \ref{theo-gp-glm-norm2} shows that the $l_{2}$-estimation error is bounded by $O\left(\gamma^{*}\frac{\log G_{n}}{n} \right)$ under $GS(2m^{*},3,\frac{1}{2n},k^{'})$. Therefore, in the case of finite size groups, $m^{*}$ still could be much larger than $\log G_{n}$  and the estimator remains consistent. We can also notice that the number of samples required in order that the prediction and estimation error (with respect to the $l_{2}$ norm) goes to zero is almost of the order of $O\left( \gamma^{*}\log G_{n}\right) $. 

As mentionned above the group structured norm satisfies the decomposability property (see \cite{NEG12}). Furthermore, under the assumptions made, the loss function satisfies a local restricted strong convexity property. According to Negahban et al., these properties are two important conditions that ensure good statistical properties of M-estimators. This is especially true for the Group Lasso applied to generalized linear model as shown in Theorem \ref{theo-gp-glm} and Theorem \ref{theo-gp-glm-norm2}. Indeed, these two theorems demonstrate the ability of the Group Lasso to recover good approximation of the true model for sparse generalized linear models under the Group Stabil condition.

\subsection{\textbf{Lasso for generalized linear models}}

When each group is of size one we recover the Lasso estimator 
$$\hat{\beta}_n=\underset{\beta\in \Lambda}{\textrm{argmin}}\left\lbrace  \mathbb{P}_{n}l(\beta)+2r_{n}\Vert \beta\Vert_{1}  \right\rbrace $$
where $\Vert \beta\Vert_{1} =\sum_{j=1}^{n}\vert\beta_{j}\vert$. Thus following step by step the proof of Theorem \ref{theo-gp-glm} we can easily deduce bounds for estimation and prediction error for the Lasso estimator in the case of generalized linear models. Notice that the $l_{2}$-estimation error of the Lasso applied to GLMs was first studied by Negahban and al. (see \cite{NEG09} and \cite{NEG12}). The Lasso is a special case of the Group Lasso where $\gamma^{*}=s^{*}$ with $s^{*}:=\vert I^{*}\vert=\left\lvert\left\lbrace j:\beta_{j}^{*}\neq 0 \right\rbrace \right\rvert$ and $G_{n}=p$. We still consider high-dimensional data i.e. $n\ll p$ and sparsity assumption on the target $\beta^{*}$ i.e. $s^{*}\ll p$ and we assume (H.1-3) except that for (H.3) we consider the $\ell_{1}$ norm i.e. $\Vert \beta^{*}\Vert_{1}\leqslant B$. The condition $GS$ in this case requires the existence of $0<k<1$ such that $\delta ^{T}\Sigma \delta\geqslant k\sum_{j\in I^{*}} \delta_{j}^{2}-\varepsilon$ for any $\delta \in S(c_{0},\varepsilon)=\left\lbrace \delta \in \mathbb{R}^{p}: \sum_{j \in {I^{*}}^{c} }\vert \delta_{j}\vert\leqslant c_{0}\sum_{j \in I^{*} }\vert \delta_{j}\vert+\varepsilon \right\rbrace $. We recover the Stabil condition $St(c_{0},\varepsilon,k)$ of \cite{BUN08}. 
We also define condition $St(2s^{*},c_{0},\varepsilon,k^{'})$ i.e there exists $0<k^{'}<1$ such that $\delta ^{T}\Sigma \delta\geqslant k^{'}\sum_{j\in J} \delta_{j}^{2}-\varepsilon $ for any $\delta $ which satisfies $\sum_{j \in {J}^{c} }\vert \delta_{j}\vert\leqslant c_{0}\sum_{j \in J }\vert \delta_{j}\vert+\varepsilon$ and $J$ such that $\vert J \vert \leq 2s^{*}$.
\begin{theorem}\label{th:LassoGLM}
Assume condition $St(3,\frac{1}{2n},k)$ is fulfilled. Let $$r_{n}\geqslant  AKL\left\lbrace  C_{L,B}\vee\underset{\left\lbrace\vert x\vert\leqslant L\kappa_{n}\right\rbrace\cap\Theta}{\textrm{max}}\vert \psi^{'}(x)\vert\right\rbrace \sqrt{\dfrac{\log(2p)}{n}}$$ where $A>\sqrt{2}$ and $C_{L,B}$ depends only on $L$ and $B$. We have, with probability at least $1-C(2p)^{-A^{2}/2}$ (where $C$ is a universal constant),
$$\Vert \hat{\beta}_n-\beta^{*}\Vert_{1}\leqslant \dfrac{4}{c_{n}k}r_{n} s^{*}+\left( 1+\frac{1}{r_{n}}\right) \dfrac{1}{2n} $$ and
$$ \mathbb{E}\left( \hat{\beta}_n^{T}X-{\beta^{*}}^{T}X\right) ^{2}\leqslant \dfrac{16}{c_{n}^{2}k}r_{n}^{2}s^{*}+\dfrac{2r_{n}+1}{2nc_{n}}. $$
Furthermore if $St(2s^{*},3,\frac{1}{2n},k^{'})$ holds then we have
$$
\Vert \hat{\beta}_n-{\beta^{*}} \Vert_{2}^{2}\leqslant 10\dfrac{ d_{\textrm{max}}}{d_{\textrm{min}}}\left\lbrace \dfrac{16}{{k^{'}}^{2}c_{n}^{2}}r_{n}^{2}s^{*}+ \frac{2r_{n}+1}{2k^{'}c_{n}n}+ \dfrac{1}{2k^{'}n}\right\rbrace ,
$$
with $c_{n}:=\underset{ \left\lbrace\vert x\vert\leqslant L(9B+\frac{1}{n})\right\rbrace \cap\Theta }{\textrm{min}}\left\lbrace \frac{\psi^{''}(x)}{2} \right\rbrace.$
\end{theorem}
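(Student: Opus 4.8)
The plan is to exploit the observation, already noted in the text, that the Lasso is exactly the special case of the Group Lasso in which every group has size one. Setting $d_g = 1$ for all $g$ (so that $d_{\max} = d_{\min} = 1$, $s(d_g) = \sqrt{d_g} = 1$ and $G_n = p$), the regularizer $2r_n \sum_g \sqrt{d_g}\Vert \beta^g\Vert_2$ collapses to $2r_n \Vert \beta\Vert_1$, and the Group Lasso estimator becomes precisely the Lasso estimator displayed above. Consequently I would derive all three assertions by invoking Theorem \ref{theo-gp-glm} and Theorem \ref{theo-gp-glm-norm2} and then translating every group quantity into its coordinatewise counterpart, exactly as the text suggests by "following step by step the proof of Theorem \ref{theo-gp-glm}".

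First I would set up the dictionary between the two settings. With singleton groups, the index set $H^*$ of active groups becomes $I^* = \{ j : \beta_j^* \neq 0 \}$, so $m^* = s^*$ and $\gamma^* = \sum_{g \in H^*} d_g = \vert I^* \vert = s^*$; both the regularizer norm $\Vert \cdot\Vert_R$ and the norm $\Vert \cdot\Vert_{2,1}$ reduce to $\Vert \cdot\Vert_1$; assumption (H.3) becomes $\Vert \beta^*\Vert_1 \leq B$; and the restricted set $S(c_0,\varepsilon)$ together with the Group Stabil condition $GS(c_0,\varepsilon,k)$ reduce verbatim to the set $S(c_0,\varepsilon)$ and the Stabil condition $St(c_0,\varepsilon,k)$ stated just before the theorem. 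Under these identifications the hypothesis $GS(3,\frac{1}{2n},k)$ is exactly $St(3,\frac{1}{2n},k)$, the lower bound on $r_n$ coincides because $\log(2G_n) = \log(2p)$, and the quantity $c_n$ is unchanged, its defining compact set being independent of the group structure.

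Next I would read off the conclusions. The bound on $\Vert \hat{\beta}_n - \beta^*\Vert_R$ in Theorem \ref{theo-gp-glm} becomes the claimed $\ell_1$ inequality once $\gamma^* = s^*$ is substituted, and its prediction-error bound becomes the second display directly. For the sharper $\ell_2$ bound I would apply Theorem \ref{theo-gp-glm-norm2}: its hypothesis $GS(2m^*,3,\frac{1}{2n},k')$ is the singleton-group instance $St(2s^*,3,\frac{1}{2n},k')$, and in the conclusion the factor $d_{\max}/d_{\min}$ equals one while $\gamma^* = s^*$, which yields the third display. For the probability, Theorem \ref{theo-gp-glm} gives $1 - (C + 2d_{\max})(2G_n)^{-A^2/2} = 1 - (C+2)(2p)^{-A^2/2}$, which I would reabsorb into a single universal constant, still denoted $C$, to match the stated $1 - C(2p)^{-A^2/2}$.

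The only point requiring care — rather than a genuine obstacle — is to confirm that the reduction is exact at the level of the \emph{proofs}, not merely of the statements: one must check that the concentration step (Proposition \ref{prop:probaAetB}, via the moment bound of Lemma \ref{lem-moment}) and the lower-quadratic-bound step both carry over unchanged when $d_g \equiv 1$, so that the same event of probability at least $1 - C(2p)^{-A^2/2}$ drives all three inequalities simultaneously. Since those arguments invoke the group sizes only through the quantities already translated above ($\gamma^*$, the two norms, $H^*$, and the Stabil constant), reproducing the proof of Theorem \ref{theo-gp-glm} with each group symbol replaced by its $\ell_1$ analogue introduces no new estimate, and the theorem follows.
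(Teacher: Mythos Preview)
Your proposal is correct and matches the paper's own approach: the paper's proof is a single sentence stating that one ``follows the same guidelines as the one for the Group Lasso'', i.e.\ one reruns the proof of Theorem~\ref{theo-gp-glm} (and Theorem~\ref{theo-gp-glm-norm2}) with each group quantity replaced by its $\ell_1$ analogue, exactly the dictionary you set up. The only nuance the paper adds is the remark that the concentration inequalities for the linear and $\psi$-parts become simpler in the singleton-group case (the factor $d_{\max}$ in Lemma~\ref{lem:propA} disappears and the contraction/symmetrization step needs no group norm), but this yields nothing beyond what your direct specialization already gives once the constant $C+2$ is reabsorbed.
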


\begin{proof}
The proof of Theorem~\ref{th:LassoGLM} follows the same guidelines as the one for the Group Lasso. The main difference comes from concentration inequalities for the linear and log Laplace transform part of the loss function, leading to simpler bounds.
\end{proof}

This result extends the one of Bunea~\cite{BUN08} for logistic regression to generalized linear model and states convergence rates for the estimation and prediction error. The error bounds presented in Theorem \ref{th:LassoGLM} are of the same order as the ones stated by Bickel, Ritov and Tsybakov \cite{BIC09} in their analysis of the properties of the Lasso for standard linear models. We can also notice that in \cite{NEG09} Negahban and al. obtained bounds for the $l_{2}$-estimation error of the Lasso applied to GLMs of the same order as the one we get but under stronger conditions. In fact they assume that the distribution of the response $Y$ based on a predictor $X$ is given by 
$$P(Y \vert X; \beta^{*})=\exp(Y{\beta^{*}} ^{T}X-\psi({\beta^{*}} ^{T}X))$$ 
with $\vert X\vert\leq A$ and $\vert Y \vert\leq B$ and that $\psi^{''}$ is bounded from below on a suitable set. In our work we do not make these two last assumptions. The restricted eigenvalue property they use is also slightly stronger than $St(3,\varepsilon_{n},k)$. In addition we establish oracle inequalities for the prediction error.

The bounds in Theorem \ref{th:LassoGLM} are meaningful if $r_{n}$ is small (in particular if $n\gg \log(p)$) and $s^{*}$ is small. Indeed, the bound on the $l_{1}$-estimation error is of the order of $O\left(  s^{*}\sqrt{\frac{\log p }{n}}\right)  $. We can also notice that the minimum number of samples required to make the $l_{2}$-estimation and prediction error decrease to zero is of the order of $O\left( s^{*}\log p\right) $.

Then a relevant issue is the benefit of the Group Lasso over the Lasso. To understand better the power of a group structured estimator when the covariates have a group structure we refer to Huang and Zhang  \cite{HUANG10}. In this paper the authors investigate the benefit of sparsity with group structure compared to the usual Lasso. They develop a concept called strong group sparsity which means that the signal $\beta^{*}$ is efficiently covered by grouping. They showed that the Group Lasso is better than the usual Lasso for strongly group sparse data in the case of a standard linear model (see also \cite{LOU09}). Comparing Theorem \ref{theo-gp-glm} and Theorem \ref{th:LassoGLM} we see that an important improvement over the Lasso is given when the number of non-zero groups is much smaller than the total number of non-zero coefficients. Actually when the number of covariates increases this estimator removes completely the effect of the number of predictors. Indeed if we assume that the maximum size of the groups is finite then the   estimation error for the Group Lasso depends only on the total number of groups $G_{n}$ and on the number of significant groups $m^{*}$. Besides the prediction error for the Lasso is of the order of $O(s^{*}\frac{\log p}{n})$ whereas it is of the order of $O(m^{*}\frac{\log G_{n}}{n})$ for the Group Lasso. Therefore if $\beta^{*}$ has a group structure this is a meaningful improvement when $p$ is large and the number of groups $G_{n}$ is much more smaller. Notice this comparaison must be tempered by the fact that the two estimators require different conditions on the covariance matrix. In fact for a same data set there is not a condition which is weaker and implies the other. We also refer to the simulations in Section \ref{s:simu} that show some of the benefit of the Group Lasso over the Lasso when the covariates have a group structure in term of estimation and prediction error.

\section{\textbf{Applications and extensions}}
\label{s:extension}

\subsection{\textbf{Group Lasso for Poisson regression}}

Sparse logistic regression has been widely studied in the literature (see, for example, \cite{BUN08} and \cite{MEIR08}) but not the Poisson one for a sparse model. This last model is also very useful for many practical applications. For instance it is used to model count data and contingency tables. Poisson models are a special case of generalized linear models where the conditionnal law of $Y$ given  $X=x$ has a Poisson distribution with parameter $\lambda^{*}(x):=\exp({\beta^{*}}^{T}x)$. Therefore the conditional mean for Poisson regression is modeled by $\mathbb{E}(Y\vert X=x)=\exp({\beta^{*}}^{T}x)$. Thus the normalized function $\psi$ is the exponential function and is defined on $\mathbb{R}$. For this special link function we are going to specify the constants which appear in Theorem \ref{theo-gp-glm}. The log-likelihood based on the observations is given by
$$\mathcal{L}(\beta)=\sum_{i=1}^{n}\left[Y_{i}\beta^{T}X_{i}-\exp(\beta^{T}X_{i})-\log(Y_{i}!) \right] $$
and thus the Poisson loss function is defined by
$$l(\beta)=:l(\beta;x,y)=:-y\beta^{T}x+\exp(\beta^{T}x).$$
It is formula (\ref{loss-function}) with $\psi=\exp$. In the particular case of Poisson regression the conditional law is defined by $Y\vert X\sim \mathcal{P}\left(\lambda^{*}(X) \right) $ and the higher moments for a Poisson distribution are given by $$\mathbb{E}\left( Y^{k}\vert X\right)=\sum_{l=1}^{k}\left( \lambda^{*}(X)\right) ^{l}	S_{l:k}$$ 
where $S_{l:k}=\dfrac{1}{l!}\sum_{i=0}^{l}(-1)^{l-i}\binom{l}{i}i^{k}\geqslant 0$ is the number of partitions of a set with $l$ members into $k$ undersets. The number $\sum_{l=1}^{k}S_{l:k}:=B_{k}$ is called the $k^{th}$ Bell number and this number satisfies the relation $B_{n+1}=\sum_{k=0}^{n}\binom{k}{n}B_{k}$ (see for example \cite{SPI08}). So we can easily prove by induction that $B_{k}\leqslant k!$ for all $k\geqslant 1$.
Then we have on the event $\left\lbrace 0\leqslant \lambda^{*}(X)< 1\right\rbrace $
 $$\mathbb{E}\left(Y^{k}\right)=\mathbb{E}\left( \sum_{i=1}^{k}\left( \lambda^{*}(X)\right) ^{i}S_{i:k}\right)\leqslant  \sum_{i=1}^{k}S_{i:k}\leqslant k! $$
and on the event $\left\lbrace 1\leqslant\lambda^{*}(X)\right\rbrace  $ using (H.1) combined with (H.3) we find
$$\mathbb{E}\left(Y^{k}\right)= \sum_{i=1}^{k}S_{i:k}\mathbb{E}\left( \left( \lambda^{*}(X)\right) ^{i}\right) \leqslant k!(e^{LB})^{k} $$
Because $e^{LB}\geqslant 1$ we deduce that for all $k\geqslant 1$
$$\mathbb{E}\vert Y\vert ^{k}\leqslant k!(e^{LB})^{k}.$$
Thus for Poisson regression  we have $C_{L,B}=e^{LB}$.
Besides $\underset{\vert x\vert\leqslant L\kappa_{n}}{\textrm{max}}\left\lbrace \vert \psi^{'}(x)\vert \right\rbrace =e^{L\kappa_{n}}$ and $\underset{\vert x\vert\leqslant L(9B+\frac{1}{n})}{\textrm{min}}\left\lbrace \frac{\psi^{''}(x)}{2} \right\rbrace =\frac{1}{2}e^{-L(9B+\frac{1}{n})}$ where we recall that $\kappa_{n}:=17B+\frac{2}{n}$.
Therefore, in the case of Poisson regression, Theorem~\ref{theo-gp-glm} becomes 
\begin{corollary} \label{th:poissonGL}
Let $c_{n}:=\frac{1}{2}e^{-L(9B+\frac{1}{n})}$.
Assume that condition $GS(3,\frac{1}{2n},k)$ holds. If $$r_{n}\geqslant AKL e^{L(17B+\frac{2}{n})} \sqrt{\dfrac{\log(2G_{n})}{n}}$$ with $A>\sqrt{2}$ then, with probability at least $1-(C+2d_{\textrm{max}})(2G_{n})^{-A^{2}/2}$, we have

$$\Vert \hat{\beta}_n-{\beta^{*}} \Vert_{R}\leqslant \dfrac{4}{c_{n}k}r_{n}\gamma^{*}+ \left( 1+\frac{1}{r_{n}}\right) \frac{1}{2n},$$

$$\Vert \hat{\beta}_n-{\beta^{*}} \Vert_{2,1}\leqslant \dfrac{4}{c_{n}k\sqrt{d_{\textrm{min}}}}r_{n}\gamma^{*}+ \left( 1+\frac{1}{r_{n}}\right) \frac{1}{2n\sqrt{d_{\textrm{min}}}}$$
and
$$\mathbb{E}\left( \hat{\beta}_n^{T}X-{\beta^{*}}^{T}X\right) ^{2}\leqslant \dfrac{16}{c_{n}^{2}k}r_{n}^{2}\gamma^{*}+ \dfrac{2r_{n}+1}{2nc_{n}}. $$
Furthermore if condition $GS(2m^{*},3,\frac{1}{2n},k^{'})$ holds, then
$$\Vert \hat{\beta}_n-{\beta^{*}} \Vert_{2}^{2}\leqslant \dfrac{ d_{\textrm{max}}}{d_{\textrm{min}}}\left[ 160\dfrac{1}{{k^{'}}^{2}c_{n}^{2}}r_{n}^{2}\gamma^{*}+ 5\frac{2r_{n}+1}{k^{'}c_{n}n}+ 10\dfrac{1}{2k^{'}n}\right] .$$
\end{corollary}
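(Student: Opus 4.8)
The plan is to obtain Corollary \ref{th:poissonGL} as a direct specialization of Theorem \ref{theo-gp-glm} and Theorem \ref{theo-gp-glm-norm2} to the Poisson case $\psi=\exp$. Since $\exp$ is finite on all of $\mathbb{R}$, the natural parameter space is $\Theta=\mathbb{R}$, so the intersections with $\Theta$ appearing in the general statements are vacuous and every structural quantity reduces to an explicit exponential. Consequently no new argument is needed: the only work is to evaluate the three constants $C_{L,B}$, $\max_{|x|\leqslant L\kappa_n}|\psi'(x)|$ and $c_n$ that enter the general bounds, and then to substitute them.

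First I would fix the moment constant via Lemma \ref{lem-moment}. For the Poisson law the higher moments are governed by the Bell numbers through $\mathbb{E}(Y^k\mid X)=\sum_{l=1}^k(\lambda^*(X))^l S_{l:k}$, and splitting on $\{\lambda^*(X)<1\}$ and $\{\lambda^*(X)\geqslant 1\}$ together with (H.1)--(H.3) and the induction $B_k\leqslant k!$ gives $\mathbb{E}|Y|^k\leqslant k!(e^{LB})^k$, i.e. $C_{L,B}=e^{LB}$. Next, because $\psi'=\psi''=\exp$ is increasing, the maximum of $|\psi'|$ over $\{|x|\leqslant L\kappa_n\}$ is attained at the right endpoint, giving $e^{L\kappa_n}=e^{L(17B+2/n)}$, while the minimum of $\psi''/2$ over $\{|x|\leqslant L(9B+1/n)\}$ is attained at the left endpoint, giving $c_n=\tfrac12 e^{-L(9B+1/n)}$. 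Since $17B+2/n\geqslant B$ we have $e^{L(17B+2/n)}\geqslant e^{LB}=C_{L,B}$, so the maximum $C_{L,B}\vee\max|\psi'|$ collapses to $e^{L(17B+2/n)}$, which is exactly the threshold on $r_n$ stated in the corollary.

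It then remains to substitute these values into the conclusions of the two general theorems. The three bounds of Theorem \ref{theo-gp-glm} carry over verbatim with $c_n=\tfrac12 e^{-L(9B+1/n)}$, and the $\ell_2$ bound follows from Theorem \ref{theo-gp-glm-norm2} after distributing the outer factor $10$ over the brace; this turns its three terms into $160\,\tfrac{1}{{k'}^2c_n^2}r_n^2\gamma^*$, $5\,\tfrac{2r_n+1}{k'c_nn}$ and $10\,\tfrac{1}{2k'n}$, which is the displayed form. The probability statement is unchanged because the constants $C$ and $d_{\textrm{max}}$ do not depend on $\psi$. I do not expect any genuine obstacle here: the single substantive ingredient, the moment bound of Lemma \ref{lem-moment}, has already been secured for the Poisson family by the Bell-number computation above, so the corollary is purely a matter of bookkeeping the explicit constants.
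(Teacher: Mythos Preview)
Your proposal is correct and follows essentially the same approach as the paper: the corollary is obtained by specializing Theorems \ref{theo-gp-glm} and \ref{theo-gp-glm-norm2} to $\psi=\exp$, after computing $C_{L,B}=e^{LB}$ via the Bell-number moment bound, $\max_{|x|\leqslant L\kappa_n}|\psi'(x)|=e^{L\kappa_n}$, and $c_n=\tfrac12 e^{-L(9B+1/n)}$. Your observation that $e^{L(17B+2/n)}\geqslant e^{LB}$ collapses the maximum, and your remark that the displayed $\ell_2$ bound is just the factor $10$ distributed across the brace of Theorem \ref{theo-gp-glm-norm2}, match the paper exactly.
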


\subsection{\textbf{Elastic net for generalized linear models}}

The most difficult part of the proof of Theorem \ref{theo-gp-glm} is to prove Proposition~\ref{prop:probaAetB}. Once this proposition has been proved it becomes easy to generalize the results presented above to any standard penalization using a modified version of the condition GS (depending on the norm we use). For example we can replace the $\ell_{1}$ norm by a combination of $\ell_{1}$ and $\ell_{2}$ norms. It is the so-called Elastic net introduced by Zou, Trevor and Hastie \cite{ZOU05} in the frame of linear regression. They showed that this estimator outperforms the Lasso in many situations for real world data and simulations. It is an alternative to the Group Lasso (the Elastic net has a behaviour similar to the one of the Group Lasso estimator). As the Lasso does, the Elastic net encourages sparsity and group selection but contrary to the Lasso when the sample size $n$ is smaller than $p$ the Elastic net can select more than $n$ significant variables. This estimator is the solution of a convex optimization problem. Zou, Trevor and Hastie in \cite{ZOU05} proposed an algorithm to solve this problem. The Elastic net estimator for the generalized linear model is defined by
\begin{equation}
\hat{\beta}_n=\underset{\beta\in \Lambda}{\textrm{argmin}}\left\lbrace  \mathbb{P}_{n}l(\beta)+2r_{n}\Vert \beta\Vert_{1} +t_{n}\Vert \beta\Vert_{2}^{2} \right\rbrace 
\label{eq elasticnet}
\end{equation}
where $r_{n}$ and $t_{n}$ are the penalty parameters. Theorem \ref{th:elasticnet} is an extension of the results first proved by Bunea \cite{BUN08} in the special case of logistic regression.
Let $2t_{n}B=r_{n}$. We have the following theorem
\begin{theorem}
\label{th:elasticnet}
Assume condition $St(4,\frac{1}{2n},k)$ holds. Let $$r_{n}\geqslant  AKL\left\lbrace  C_{L,B}\vee\underset{\left\lbrace\vert x\vert\leqslant L(17B+\frac{2}{n})\right\rbrace\cap\Theta}{\textrm{max}}\vert \psi^{'}(x)\vert\right\rbrace \sqrt{\dfrac{\log(2p)}{n}}$$ where $A>\sqrt{2}$ and $C_{L,B}$ depends only on $L$ and $B$. Then, 
with probability at least $1-C(2p)^{-A^{2}/2}$ (where $C$ is a universal constant), we have
$$\Vert \hat{\beta}_n-\beta^{*}\Vert_{1}\leqslant \dfrac{(2.5)^{2}}{t_{n}+c_{n}k}r_{n} s^{*}+\left( 1+\frac{1}{r_{n}}\right) \dfrac{1}{2n} $$ and
$$ \mathbb{E}\left( \hat{\beta}_n^{T}X-{\beta^{*}}^{T}X\right) ^{2}\leqslant  \dfrac{2(2.5)^{2}}{c_{n}k(t_{n}+c_{n}k)}r_{n}^{2}s^{*}+\dfrac{2r_{n}+3}{2c_{n}n}.
 $$
where $c_{n}:=\underset{\left\lbrace\vert x\vert\leqslant L(9B+\frac{1}{n})\right\rbrace \cap\Theta}{\textrm{min}}\left\lbrace  \frac{\psi^{''}(x)}{2}\right\rbrace $.
\label{theo-elastic}
\end{theorem}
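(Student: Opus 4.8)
The plan is to follow the proof of Theorem~\ref{theo-gp-glm} line by line, exploiting the fact that the genuinely difficult step, the concentration bound of Proposition~\ref{prop:probaAetB}, transfers to the present setting: the Elastic net is simply the Lasso regularizer (the singleton-group specialization of $\Vert\cdot\Vert_R$, where $\Vert\cdot\Vert_R=\Vert\cdot\Vert_1$) augmented by the smooth ridge term $t_n\Vert\beta\Vert_2^2$, and the specialization of Proposition~\ref{prop:probaAetB} to groups of size one controls the events $\mathcal{A}$ and $\mathcal{B}$ with the announced probability $1-C(2p)^{-A^2/2}$. First I would start from the optimality of $\hat{\beta}_n$ in~(\ref{eq elasticnet}): comparing the penalized empirical criterion at $\hat{\beta}_n$ and at $\beta^*$ gives the basic inequality
$$\mathbb{P}_nl(\hat{\beta}_n)-\mathbb{P}_nl(\beta^*)+t_n\Vert\hat{\beta}_n\Vert_2^2-t_n\Vert\beta^*\Vert_2^2\leqslant 2r_n\Vert\beta^*\Vert_1-2r_n\Vert\hat{\beta}_n\Vert_1.$$
Writing $\mathbb{P}_nl(\hat{\beta}_n)-\mathbb{P}_nl(\beta^*)$ as the sum of the nonnegative excess risk $\mathbb{P}l(\hat{\beta}_n)-\mathbb{P}l(\beta^*)$ and the fluctuation $(\mathbb{P}_n-\mathbb{P})(l(\beta^*)-l(\hat{\beta}_n))$, I split the fluctuation into its linear and $\psi$-parts and bound each on $\mathcal{A}\cap\mathcal{B}$ by $\tfrac{r_n}{2}\Vert\hat{\beta}_n-\beta^*\Vert_1$, up to the additive $\varepsilon_n$ slack, exactly as in Propositions~\ref{prop:upbound1} and~\ref{prop:upbound2}; Lemma~\ref{local} ensures that $\hat{\beta}_n$ lies in the neighbourhood on which $\mathcal{B}$ is active.

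The only genuinely new term is the ridge difference. Setting $\delta=\hat{\beta}_n-\beta^*$ I would expand
$$t_n\Vert\hat{\beta}_n\Vert_2^2-t_n\Vert\beta^*\Vert_2^2=t_n\Vert\delta\Vert_2^2+2t_n{\beta^*}^T\delta,$$
keep the nonnegative quadratic $t_n\Vert\delta\Vert_2^2$ on the favourable side, and control the cross term by
$$2t_n\vert{\beta^*}^T\delta\vert\leqslant 2t_n\Vert\beta^*\Vert_\infty\sum_{j\in I^*}\vert\delta_j\vert\leqslant 2t_nB\sum_{j\in I^*}\vert\delta_j\vert=r_n\sum_{j\in I^*}\vert\delta_j\vert,$$
using $\Vert\beta^*\Vert_\infty\leqslant\Vert\beta^*\Vert_1\leqslant B$ together with the calibration $2t_nB=r_n$. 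This extra $r_n\sum_{j\in I^*}\vert\delta_j\vert$ is exactly what modifies the bookkeeping of the Lasso argument: once it is absorbed, the cone that $\delta$ must satisfy is relaxed from the $St(3,\cdot,\cdot)$ of Theorem~\ref{th:LassoGLM} to $St(4,\tfrac{1}{2n},k)$, and I would verify $\delta\in S(4,\tfrac{1}{2n})$ by the usual rearrangement showing that the $\ell_1$ mass of $\delta$ off $I^*$ is dominated by $4$ times its mass on $I^*$ plus $\tfrac{1}{2n}$.

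Next I would invoke the quadratic lower bound of Proposition~\ref{prop:lowbound}, namely $\mathbb{P}l(\hat{\beta}_n)-\mathbb{P}l(\beta^*)\geqslant c_n\,\delta^T\Sigma\delta$, and apply the Stabil condition $St(4,\tfrac{1}{2n},k)$ to get $\delta^T\Sigma\delta\geqslant k\sum_{j\in I^*}\delta_j^2-\tfrac{1}{2n}$. Together with the retained ridge quadratic $t_n\Vert\delta\Vert_2^2\geqslant t_n\sum_{j\in I^*}\delta_j^2$, the effective curvature acting on $\sum_{j\in I^*}\delta_j^2$ becomes $c_nk+t_n$, which is the source of the denominator $t_n+c_nk$ in the statement. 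Bounding $\big(\sum_{j\in I^*}\vert\delta_j\vert\big)^2\leqslant s^*\sum_{j\in I^*}\delta_j^2$ and using the cone inequality $\Vert\delta\Vert_1\leqslant(1+c_0)\sum_{j\in I^*}\vert\delta_j\vert+\tfrac{1}{2n}$ with $c_0=4$, the basic inequality reduces to a quadratic inequality in $\sum_{j\in I^*}\vert\delta_j\vert$ (equivalently in $\Vert\delta\Vert_1$); solving it produces the stated $\ell_1$-estimation and prediction bounds, the constant $(2.5)^2$ reflecting the cone factor $\tfrac{1+c_0}{2}=2.5$.

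The main obstacle I anticipate is not any single estimate but the consistent routing of the cross term $2t_n{\beta^*}^T\delta$, which must simultaneously relax the cone from $3$ to $4$, contribute the extra $t_n$ to the curvature, and leave the additive $\tfrac{1}{2n}$ slacks intact. Maintaining the calibration $2t_nB=r_n$ in alignment with (H.3) throughout is precisely what makes these three effects close up into the clean denominator $t_n+c_nk$ and the constants $(2.5)^2$.
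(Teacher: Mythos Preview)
Your proposal is correct and follows essentially the same route as the paper's proof: start from the optimality inequality, bound the empirical process on $\mathcal{A}\cap\mathcal{B}$ via Propositions~\ref{prop:upbound1} and~\ref{prop:upbound2} (with Lemma~\ref{local}), handle the ridge contribution so that it both yields the extra curvature $t_n$ on $\sum_{j\in I^*}\delta_j^2$ and relaxes the cone from $c_0=3$ to $c_0=4$, then apply Proposition~\ref{prop:lowbound} together with $St(4,\tfrac{1}{2n},k)$ and finish by Cauchy--Schwarz plus the $2xy\leqslant tx^2+y^2/t$ trick. The only cosmetic difference is the bookkeeping for the ridge term: you expand $t_n\Vert\hat{\beta}_n\Vert_2^2-t_n\Vert\beta^*\Vert_2^2=t_n\Vert\delta\Vert_2^2+2t_n{\beta^*}^T\delta$ and keep the full $t_n\Vert\delta\Vert_2^2$ before discarding the off-$I^*$ part, whereas the paper adds $t_n\sum_{j\in I^*}\delta_j^2$ to both sides and bounds the combination $t_n\sum_{j\in I^*}\delta_j^2-t_n\Vert\hat{\beta}_n\Vert_2^2+t_n\Vert\beta^*\Vert_2^2\leqslant 2t_n\sum_{j\in I^*}\beta_j^*(\beta_j^*-\hat\beta_j)\leqslant r_n\sum_{j\in I^*}\vert\delta_j\vert$; the two computations are algebraically equivalent and both rely on $2t_nB=r_n$.
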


We can notice that thanks to the $\ell_{2}$ penalty the bound for the $\ell_{1}$ and $\ell_{2}$ errors are less sensitive to small value of $k$ and small value of $c_{n}$ (which can appears when $L$ or  $B$ are large).

\section{\textbf{Simulations}}
\label{s:simu}

We are going to compare the performances of the Lasso and Group Lasso for Poisson Regression on simulated data sets. Computations have been performed using \texttt{R}. We use the package \texttt{grplasso} developped by Meir, van de Geer  and Buhlmann \cite{MEIR08} for the Group Lasso and the package \texttt{glmnet} developped by Friedman, Hastie and Tibshirani \cite{FRIED10} for the Lasso. The function \texttt{glmnet} fits the entire lasso regularization path for some generalized linear model via penalized maximum likelihood. We use this function in the particular case of Poisson regression.  The function \texttt{grplasso} fits the solution of a Group Lasso problem for a model of type \texttt{grpl.model} which generates models to be used for Group Lasso algorithm and identify the exponential family of the response and the link function which is used. Here we consider the function \texttt{PoissReg()} which generates a Poisson model. 

We simulate 100 data sets for each simulation  and we ran the Lasso and Group Lasso on these data sets. Each data set $X$ is cut into three separate subsets: a training data set, a validation data set and a test data set. We simulate responses via the model
$Y\sim \mathcal{P}(\exp(X\beta^{*}))$ where $\beta^{*}=({\beta^{*}}^{1},...,{\beta^{*}}^{G})$ with $g$ groups with non zero coefficients among the $G$ groups. The training data set ($X_{\textrm{train}}$ of size $n_{\textrm{train}}$) is used to fit the model (we estimate the target $\beta^{*}$ for a sequence of the tuning parameter $\lambda$ and denote by $\beta_{\lambda}$ the estimate of $\beta^{*}$ obtained for such a parameter for the Lasso and the Group Lasso estimator). Then, we use the validation data ($X_{\textrm{valid}}$ of size $n_{\textrm{valid}}$) to evaluate the performance of the fitted model according to a specific loss function. We define the optimal tuning parameter as the one for which the deviation from the fitted mean to the response is minimal i.e. $\lambda_{\textrm{opt}}\in \underset{\lambda}{\textrm{argmin}}\left\lbrace \dfrac{1}{n_{\textrm{valid}}}\Vert Y-\exp(X_{\textrm{valid}}\beta_{\lambda})\Vert_{2}^{2} \right\rbrace $.  From that we determine the model with the parameter vector $\beta_{\lambda_{\textrm{opt}}}$. Then we compute the hits i.e the number of correctly identified relevant variables, the false positives i.e the number of non significant variables choosen as relevant and the degreee of freedom i.e the total number of variables selected in the model. Finally, we estimate the performance of the selected model by computing the coefficients estimation error $\Vert \beta^{*}-\beta_{\lambda_{\textrm{opt}}}\Vert_{1}$ and the prediction error $ \Vert X_{\textrm{test}}\beta^{*}-X_{\textrm{test}}\beta_{\lambda_{\textrm{opt}}}\Vert_{2}$ on the test data ($X_{\textrm{test}}$ of size $n_{\textrm{test}}$). We ran the Lasso and Group Lasso on these data sets.

Eight models are considered in the simulations. For each simulation $n_{train}=50$, $n_{valid}=50$, $n_{test}=100$. To compare the Lasso and Group Lasso we use a random design matrix where the predictors are simulated as followed according to a uniform distribution to have bounded predictors.
\begin{enumerate}
\item 
\begin{itemize}
\item $X_{i}=U_{1}+\varepsilon_{i}$ for $1\leqslant i\leqslant 10$ with $U_{1}\sim U(\left[0,1 \right] )$
\item $X_{i}=U_{2}+\varepsilon_{i}$ for $11\leqslant i \leqslant 20$ with $U_{2}\sim U(\left[0,1 \right] )$
\item $X_{i}=U_{i}$ for the last $100$ variables $U_{i}\sim U(\left[-0.1,0.1 \right] )$
\end{itemize}
with $\varepsilon_{i}$ i.i.d $\sim U(\left[0,0.01 \right] )$.\\
The covariates within the first two blocks are highly correlated ($\sim 0.8$) and there are small correlations between the blocks.  The target is 
$$\beta^{*}=(\underbrace{0.3,...,0.3}_{10},\underbrace{0.2,...,0.2}_{10},\underbrace{\underbrace{0,...,0}_{10},...,\underbrace{0,...,0}_{10}}_{10}).$$
\item
The simulation is the same as the first one except that $\varepsilon_{i}$ i.i.d $\sim U(\left[0,1 \right] )$. Thus there are small correlations within and between groups ($\sim 0.5$). 

\item 
The simulation is the same as the first one except that $\varepsilon_{i}$ i.i.d $\sim U(\left[0,1.2 \right] )$. Thus there are very small correlations within and between groups ($\sim 0.2$). \\\\
For all the following simulations the non zero groups are generated in the same way as in the second example. For  $j=1,...,m^{*}$ and $i=1,...,d_{j}$, $X_{i}=U_{j}+\varepsilon_{i}$ where $U_{j}\sim U(\left[0,1 \right] )$  and $\varepsilon_{i}$ i.i.d $\sim U(\left[0,1 \right] )$. The non influencial groups are simulated according to $X_{i}=U_{i}$ with $U_{i}\sim U(\left[  -0.1,0.1 \right] )$.
In the two following simulations we increase the size of the non zero groups
\item 
 $$\beta^{*}=(\underbrace{0.3,...,0.3}_{20},\underbrace{0.2,...,0.2}_{20},\underbrace{\underbrace{0,...,0}_{20},...,\underbrace{0,...,0}_{20}}_{10}).$$
\item 
$$\beta^{*}=(\underbrace{0.3,...,0.3}_{5},\underbrace{0.2,...,0.2}_{5},\underbrace{\underbrace{0,...,0}_{5},...,\underbrace{0,...,0}_{5}}_{10}).$$
In the last three simulations it is the number of the non zero groups which is increased.
\item
$$\beta^{*}=(\underbrace{0.2,...,0.2}_{10},\underbrace{0.2,...,0.2}_{10},\underbrace{\underbrace{0,...,0}_{10},...,\underbrace{0,...,0}_{10}}_{10}).$$
\item 
$$\beta^{*}=(\underbrace{\underbrace{0.2,...,0.2}_{10},...,\underbrace{0.2,...,0.2}_{10}}_{4},\underbrace{\underbrace{0,...,0}_{10},...,\underbrace{0,...,0}_{10}}_{10}).$$
\item
$$\beta^{*}=(\underbrace{\underbrace{0.2,...,0.2}_{10},...,\underbrace{0.2,...,0.2}_{10}}_{6},\underbrace{\underbrace{0,...,0}_{10},...,\underbrace{0,...,0}_{10}}_{10}).$$
\end{enumerate}

The results of the eight simulations are reported in Table I hereafter where $p$ is the number of covariates, $s^{*}$ the number of significant covariates, $G$ the number of groups, $m^{*}$ the number of non zero groups and $v$ is the size of the non zero groups. The Group Lasso seems to perform better than the Lasso to include the relevant predictors into the model particularly when there are high within group correlations. The Lasso tends to select fewer variables among the influencial ones (the Lasso selects only some variables from the groups of highly correlated predictors) than the Group Lasso does in the case of highly correlated covariates and when the size or the number of the nonzero groups is large. The Group Lasso succeeds in including the true significant groups in most of the cases. On the contrary the Group Lasso estimator tends to add more irrelevant covariates than the Lasso does in particular when the number or the size of the non influencial groups is large. We can expect such a result because the Group Lasso estimator includes not single variable one after another but groups of variables (when one of the covariates is included in the model all the others which belongs to the same group are also included in the model). Thus the Group Lasso selects models that are larger than the true model. However when the Group Lasso selects a number of  covariates which is the same than the number of significants covariates it is, with high probability, the correct groups which are included. We have also measured the performances of the Lasso and Group Lasso in terms of estimation error and prediction error. The Group Lasso seems to perform better than the Lasso in term of estimation error and of prediction error in most of the cases and the improvement is particularly meaningful for prediction error. We can also notice that the performances of the two estimators decreases with the increase of $G$ and $m^{*}$. To conclude, the Group Lasso estimator seems to perform better than the Lasso to include the hits in the model and in terms of prediction and estimation error when the covariates are structured into groups and in particular in the case of high correlations within groups.

\begin{table*}[h]
\begin{tabular}{|c|c|c|c|c|c|c|c|c|}
\hline
Simulation &1 &2&3&4&5&6&7&8\\
\hline 
$s^{*}/p$  & $20/120$ & $20/120$  & $20/120$ & $40/240$& $10/60$& $20/120$ & $40/140$ &$60/160$ \\ 

$G$ & $12$ & $12$  & $12$ & $12$ & $12$& $12$ & $14$& $16$\\ 
 
$m^{*}$ & $2$ & $2$  & $2$ & $2$  & $2$& $2$ & $4$ & $6$\\ 
 
$v$ & $10$ & $10$  & $10$ & $20$ & $5$& $10$ & $10$ & $10$ \\ 
\hline 
$\textrm{Mean Hit lasso (\%) }$ & $86,9$ & $99,4$  & $99,95$ & $66,27$ &$87,6$ & $95,9$ &$78,79$&$54,16$\\ 
 
$\textrm{Mean Hit group lasso (\%) }$ & $100$ & $100$  & $100$ & $98,5$ & $100$& $100$ & $100$& $98,8$\\ 
\hline 
$\textrm{Mean False positive lasso (\%)}$ & $15,02$ & $10,61$ & $9,25$ & $2,39$ & $47,38$  & $18,96$ & $9,33$ & $8,87$\\ 

$\textrm{Mean False positive  group lasso (\%) }$ & $28,7$ & $36,1$  & $33,9$ & $74$ &$34,2$& $29,6$ & $61,9$ & $37,9$ \\ 
\hline 
$\textrm{Mean Nonzero lasso}$ & $32,4$ & $30,49$  & $29,24$ & $31,29$& $32,45$ & $38,14$ &$40,85$ &$41,37$\\ 
 
$\textrm{Mean Nonzero gp lasso}$ & $48,7$& $56,1$  & $53,9$ & $187,4$& $27,1$ & $49,6$ & $101,9$ & $97,2$\\ 

\hline 
$\textrm{Mean Prediction error lasso}$  & $0,19$ & $0,15$  & $0,18$ & $6,73$ & $0,15$& $0,17$ & $3,22$& $17,97$\\ 
 
$\textrm{Mean Prediction error gp lasso}$ & $0,021$ & $0,007$  & $0,004$ & $1,98$ & $0,03$& $0,016$ & $0,012$& $1,32$\\ 

\hline 
$\textrm{Mean Estimation error lasso}$  & $6,36$ & $2,84$ & $2,08$ & $9,79$ & $15,31$& $6,37$ & $8,45$ & $16,28$  \\ 
 
$\textrm{Mean Estimation error gp lasso}$ & $5,54$ & $2,66$ & $1,64$ & $18,65$ & $4,22$& $3,40$ & $2,77$ & $12,76$ \\ 
\hline 
\end{tabular}
\label{tab:result-simu}
\caption{Results of the simulations for the eight models}
\end{table*}

To better understand the difference of behaviour between the Lasso and the Group Lasso in terms of variables selection we also provide some kind of ROC curves. These curves are created by plotting the fraction of true positives (of the estimated parameter) out of the positive coefficients of $\beta^{*}$ for discretized values of the penalty parameter going from zero (completely dense solution) up to the value where the first group of covariates enters the model (completely sparse solution). To ensure that we well detect each new inclusion of covariates, the discretization involves $10000$ values of the penalty parameter. We plot the curves for each model considered. Three models are described below and the respective ROC curves are shown in Figures \ref{fig:1}, \ref{fig:2} and \ref{fig:3}.

For each model we have
\begin{itemize}
\item for  $j=1,...,m^{*}$ and $i=1,...,d_{j}$, $X_{i}=U_{j}+\varepsilon_{i}$ where $\varepsilon_{i}$ i.i.d $\sim U(\left[-1,1 \right] )$ and $U_{j}\sim U(\left[0,1 \right] )$ for simulation 1 and $U_{j}\sim U(\left[-1,1 \right] )$ for simulation 2 and 3 .
\item for the non significant groups $X_{i}=U_{i}$ with $U_{i}\sim U(\left[  -0.1,0.1 \right] )$ for simulation 1 and 2 and $U_{i}\sim U(\left[  -0.01,0.01 \right] )$ for simulation 3.
\end{itemize}
  
\begin{enumerate}
\item For the first simulation the target is \\
$\beta^{*}=(\underbrace{\underbrace{0.2,...,0.2}_{5},...\underbrace{0.2,...,0.2}_{5}}_{10},\underbrace{\underbrace{0,...,0}_{5},...,\underbrace{0,...,0}_{5}}_{50}).$
\begin{figure}[H]
\begin{center}
\includegraphics[scale=0.4]{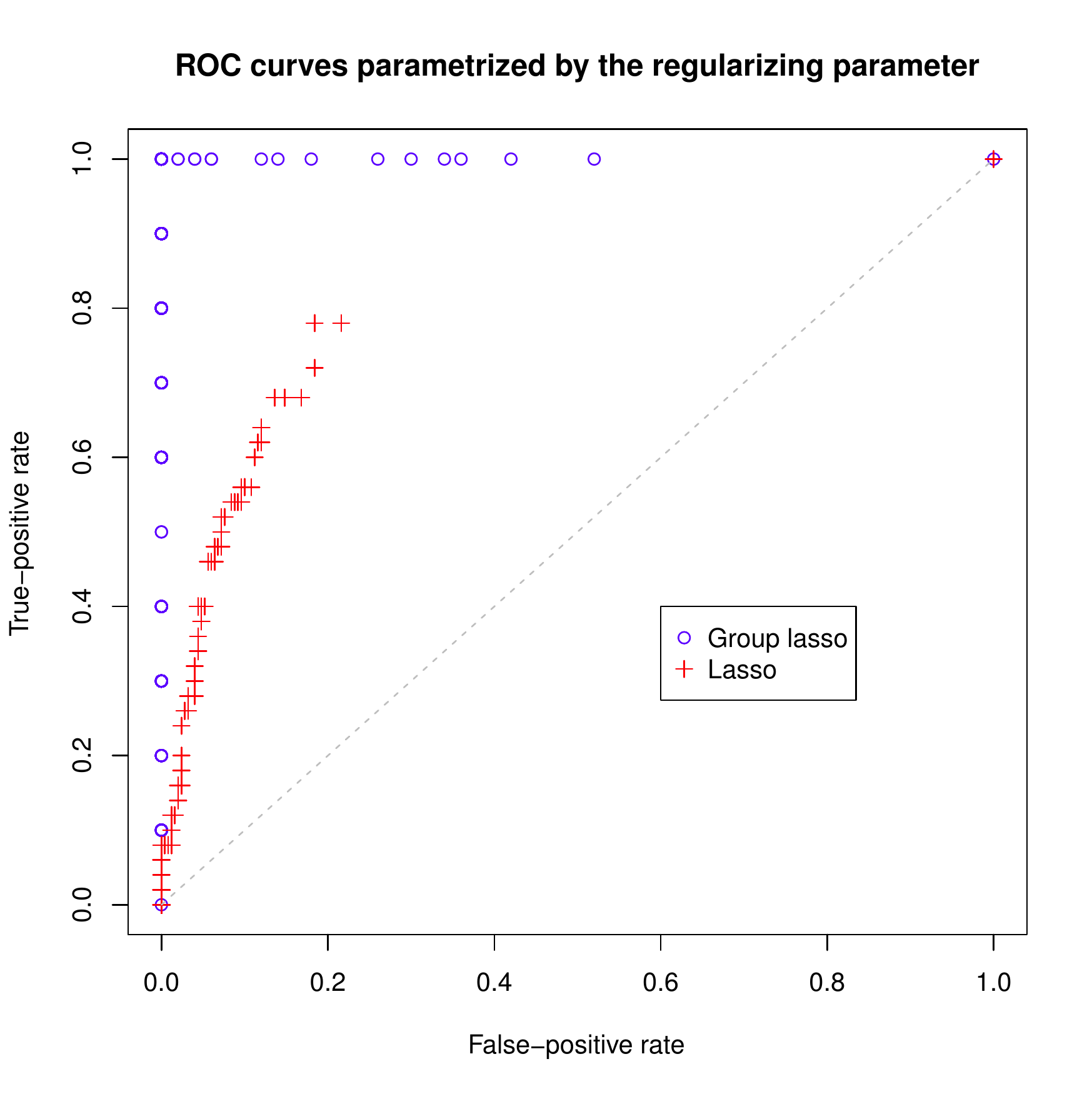}
\caption{Simulation 1}
\label{fig:1}
\end{center}
\end{figure}

\item For the second simulation the target is \\
$\beta^{*}=(\underbrace{\underbrace{0.2,...,0.2}_{5},...\underbrace{0.2,...,0.2}_{5}}_{10},\underbrace{\underbrace{0,...,0}_{5},...,\underbrace{0,...,0}_{5}}_{100}).$
\begin{figure}[H]
\begin{center}
\includegraphics[scale=0.4]{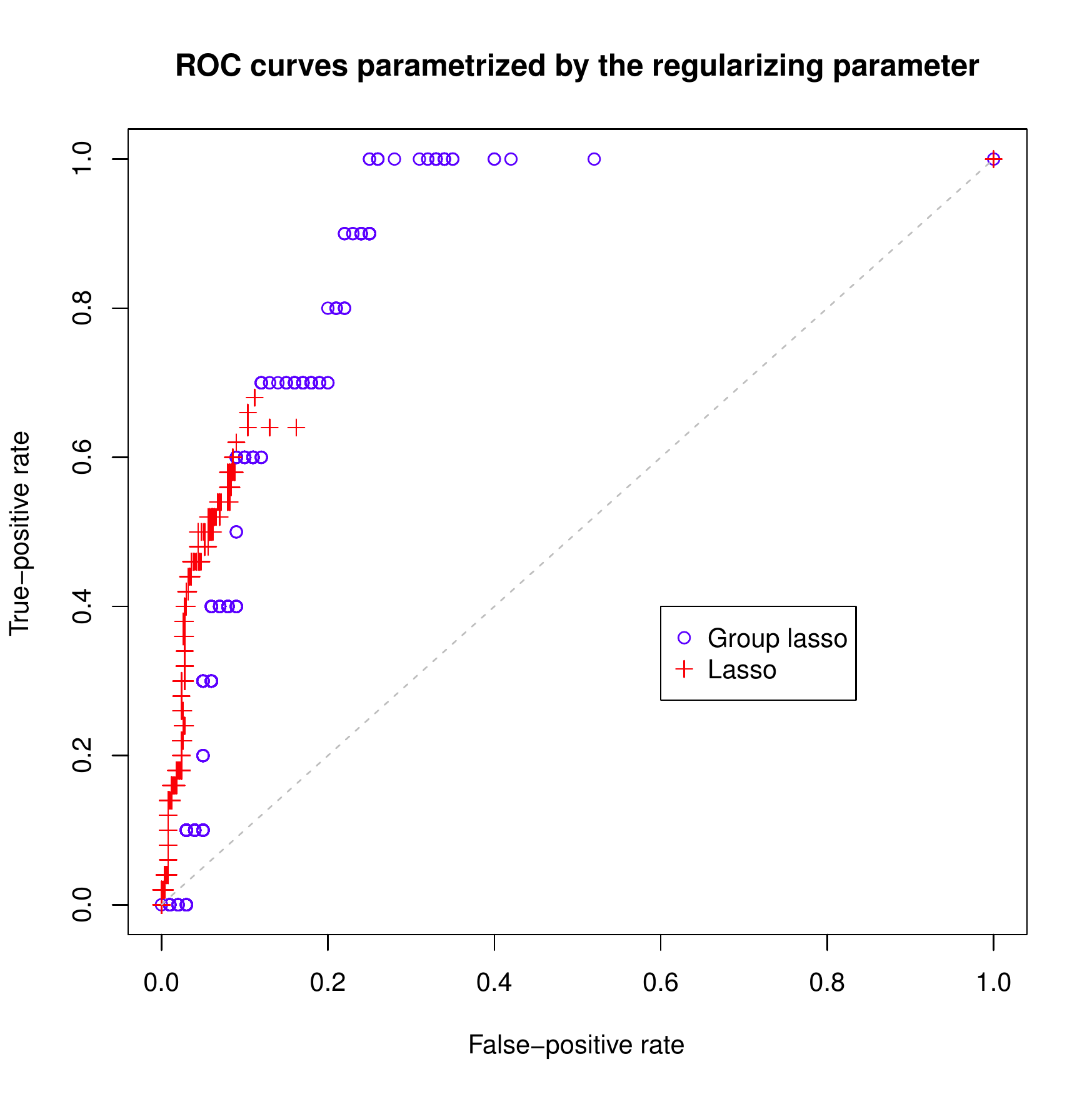}
\caption{Simulation 2}
\label{fig:2}
\end{center}
\end{figure}

\item For the third simulation the target is \\
$\beta^{*}=(\underbrace{\underbrace{0.2,...,0.2}_{5},...\underbrace{0.2,...,0.2}_{5}}_{10},\underbrace{\underbrace{0,...,0}_{5},...,\underbrace{0,...,0}_{5}}_{100}).$
\begin{figure}[H]
\begin{center}
\includegraphics[scale=0.4]{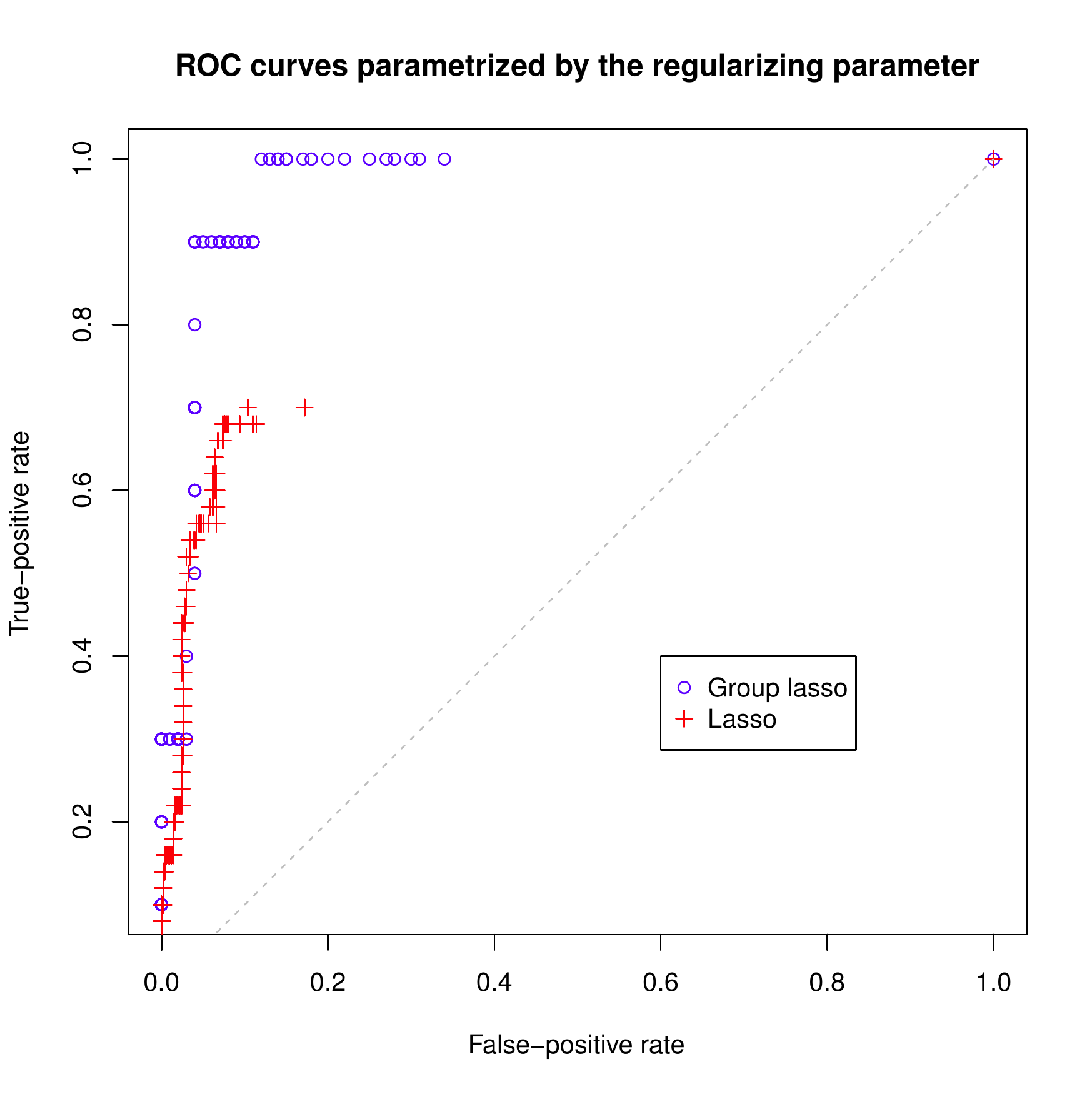}
\caption{Simulation 3}
\label{fig:3}
\end{center}
\end{figure}
\end{enumerate}

The simulations clearly illustrate that contrary to the Lasso the Group Lasso includes not single covariates one after another but all the covariates which belong to the same group at the same time. The Group Lasso tends to include in priority the significant groups and then the non significant groups in such a way that for some values of the penalty parameter the rate of perfect selection can be very close to one and even equal to one in some particular cases (see for instance Figure \ref{fig:1}). We can notice that the ROC curves for the Lasso follow at the beginning those of the Group Lasso and then fall below. The break in the curves of the Lasso illustrate a special feature of this estimator. Actually  the number of covariates included in the model for the Lasso is restricted by the sample size whereas for the Group Lasso the restriction is given by the number of groups. That is why for the Lasso the number of true positives but also the number of false positives are smaller than those of the Group Lasso (once approximately $100$ covariates are added in the model the Lasso stop including other ones). We can also notice that the Group Lasso is less stable than the Lasso in terms of variables selection. In fact a little change in the penalty parameter can substantially increase the number of false positives. This particular feature of the Group Lasso due to a group structure penalty is reflected in Table I with a rate of false positives larger for the Group Lasso in most of the cases. Thus Figures \ref{fig:1}, \ref{fig:2} and \ref{fig:3} confirm what has been previously deduced from the results presented in Table I. To conclude the Group Lasso seems to be more reliable than the Lasso to include the variables of interest but in return it tends to incorporate more false positives.

\section{\textbf{Conclusion}}

We consider the generalized linear model in high dimensional settings and use the Group Lasso estimator to estimate the regression parameter $\beta^{*}$ when the covariates are naturally structured into groups of variables and the true parameter is group sparse (just a few variables are relevant to explain the response). Under some assumptions on the sparsity of $\beta^{*}$, on the correlations between the groups of covariates and on the tuning parameter of the estimator we state general oracle inequalities for estimation and error prediction for the Group Lasso estimator applied to generalized linear models. In the particular case of groups of size one, we provide original inequalities for the Lasso in the case of generalized linear models extending the results of Bunea~\cite{BUN08} for logistic regression. Furthermore, we extend these results to other penalties such as the Elastic net. Then we compare the performances of the Lasso to the ones of the Group Lasso on simulated data. We show the improvement in terms of variables selection and prediction errror of the Group Lasso compared to the Lasso when the covariates are structured into groups. Moreover we illustrate on these simulated data the impact of the total number of groups, of the number of non zero groups and of the size of the groups on the performances of the Group Lasso. The conclusion is that the Group Lasso estimator behave well in a high dimensional setting under sparsity and group correlations assumptions. However the main
drawback of the Group Lasso is that we need an a priori knowledge on the groups and it is not always possible.


%


\appendices
\section{\textbf{Proof of Theorem \ref{theo-gp-glm}}}

\subsection{\textbf{The main steps of the proof}}

The proof follows the guidelines  in \cite{BUH11} or \cite{LOU02}. Using the mere definition of $\hat{\beta}_n$, we have
\begin{equation}
\mathbb{P}_{n}l(\hat{\beta}_n)+2r_{n}\sum_{g=1}^{G_{n}}\sqrt{d_{g}}\Vert {\hat{\beta}_n}^{g}\Vert_{2} \leqslant  \mathbb{P}_{n}l(\beta^{*}) +2r_{n}\sum_{g=1}^{G_{n}}\sqrt{d_{g}}\Vert {\beta^{*}}^{g}\Vert_{2} .
\label{eq debut-gpl}
\end{equation}
Hence we get
$$\mathbb{P}\left( l(\hat{\beta}_n)-l(\beta^{*})\right)+2r_{n}\sum_{g=1}^{G_{n}}\sqrt{d_{g}}\Vert \hat{\beta}_n^{g}\Vert_{2}$$
\begin{equation}
\leqslant \left( \mathbb{P}_{n}-\mathbb{P}\right) \left( l(\beta^{*})-l(\hat{\beta}_n)\right) +2r_{n}\sum_{g=1}^{G_{n}}\sqrt{d_{g}}\Vert {\beta^{*}}^g\Vert_{2}.
\label{eq-principale-gpl}
\end{equation}
We decompose the empirical process into a linear part and a part which depends on the normalized parameter $\psi$.$$\left( \mathbb{P}_{n}-\mathbb{P}\right) \left( l(\beta^{*})-l(\hat{\beta}_n)\right)$$ $$=\left( \mathbb{P}_{n}-\mathbb{P}\right) \left( l_{l}(\beta^{*})-l_{l}(\hat{\beta}_n)\right)+\left( \mathbb{P}_{n}-\mathbb{P}\right) \left( l_{\psi}(\beta^{*})-l_{\psi}(\hat{\beta}_n)\right)$$ where $$l_{l}(\beta):=l_{l}(\beta,x,y)=-y\beta^{T}x$$ and $$l_{\psi}(\beta):=l_{\psi}(\beta,x)=\psi(\beta^{T}x).$$
From Proposition~\ref{prop:upbound1} and Proposition~\ref{prop:upbound2} and by adding $r_{n}\sum_{g=1}^{G_{n}}\sqrt{d_{g}}\Vert \hat{\beta}_n^{g}-{\beta^{*}}^g\Vert_{2}$ to both sides of the inequality (\ref{eq-principale-gpl}) we find, on $\mathcal{A}\cap \mathcal{B}$, that  
$$r_{n}\sum_{g=1}^{G_{n}}\sqrt{d_{g}}\Vert \hat{\beta}_n^{g}-{\beta^{*}}^g\Vert_{2}+\mathbb{P}\left( l(\hat{\beta}_n)-l(\beta^{*})\right)$$
$$\leqslant  2r_{n}\sum_{g=1}^{G_{n}}\sqrt{d_{g}}\left( \Vert \hat{\beta}_n^{g}-{\beta^{*}}^g\Vert_{2}+\Vert {\beta^{*}}^g\Vert_{2}-\Vert \hat{\beta}_n^{g}\Vert_{2}\right) +\frac{r_{n}}{2}\varepsilon_{n}.$$
If $g\notin H^{*}$ then $\Vert \hat{\beta}_n^{g}- {\beta^{*}}^{g}\Vert_{2} +\Vert {\beta^{*}}^{g}\Vert_{2}  - \Vert \hat{\beta}_n^{g}\Vert=0$ and otherwise $ \Vert {\beta^{*}}^{g}\Vert_{2}  - \Vert \hat{\beta}_n^{g}\Vert_{2}\leqslant \Vert \hat{\beta}_n^{g}- {\beta^{*}}^{g}\Vert_{2}$. So the last inequality can be bounded by
\begin{equation}
 4r_{n}\sum_{g\in H^{*}}\sqrt{d_{g}}\Vert \hat{\beta}_n^{g}-{\beta^{*}}^g\Vert_{2}+\frac{r_{n}}{2}\varepsilon_{n}.
\label{eq-principale2-gpl}
\end{equation}
By the definition of $\beta^{*}$ we have $\mathbb{P}\left( l(\hat{\beta}_n)-l(\beta^{*})\right)>0$ and therefore
$$ \sum_{g\notin H^{*}}\sqrt{d_{g}}\Vert \hat{\beta}_n^{g}-{\beta^{*}}^g\Vert_{2}\leqslant3\sum_{g\in H^{*}}\sqrt{d_{g}}\Vert \hat{\beta}_n^{g}-{\beta^{*}}^g\Vert_{2} +\frac{\varepsilon_{n}}{2}$$
i.e $\hat{\beta}_n-\beta^{*} \in S(3,\dfrac{\varepsilon_{n}}{2})$.
The next proposition provides a lower bound for $\mathbb{P}\left( l(\hat{\beta}_n)-l(\beta^{*})\right)$.
\begin{prop}\label{prop:lowbound}
On the event $\mathcal{A}\cap \mathcal{B}$
we have $$\mathbb{P}\left( l(\hat{\beta}_n)-l(\beta^{*})\right)
\geqslant c_{n}\mathbb{E}\left[ \left( f_{\hat{\beta}_n}(X)-f_{\beta^{*}}(X)\right) ^{2}\right]$$
with $c_{n}:=\underset{\vert x\vert\leqslant L(9B+\frac{1}{n})}{\textrm{min}}\left\lbrace  \frac{\psi^{''}(x)}{2}\right\rbrace $.
\end{prop}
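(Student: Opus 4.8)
The plan is to reduce the excess risk to a weighted $L^2$-distance via a second-order Taylor expansion of $\psi$, and then to control the Taylor remainder using the localization already established in Lemma~\ref{local}. First I would rewrite the expected loss difference. Since $l(\beta;x,y)=-yf_\beta(x)+\psi(f_\beta(x))$ and $\mathbb{E}(Y\mid X)\overset{\textrm{a.s}}{=}\psi'(f_{\beta^*}(X))$, the tower property gives $\mathbb{E}[Yf_\beta(X)]=\mathbb{E}[\psi'(f_{\beta^*}(X))f_\beta(X)]$, so that the linear contributions combine into
$$\mathbb{P}\left(l(\hat\beta_n)-l(\beta^*)\right)=\mathbb{E}\Big[\psi(f_{\hat\beta_n}(X))-\psi(f_{\beta^*}(X))-\psi'(f_{\beta^*}(X))\big(f_{\hat\beta_n}(X)-f_{\beta^*}(X)\big)\Big].$$

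Next, for each realization of $X$ I would apply Taylor's formula to $\psi$ at the point $f_{\beta^*}(X)$: there exists $\xi=\xi(X)$ lying between $f_{\beta^*}(X)$ and $f_{\hat\beta_n}(X)$ such that the bracketed expression equals $\tfrac12\psi''(\xi)\big(f_{\hat\beta_n}(X)-f_{\beta^*}(X)\big)^2$ (both arguments lie in $\Theta$ since $\hat\beta_n,\beta^*\in\Lambda$, and $\Theta$ is convex, so the whole segment stays inside $\Theta$ where $\psi$ is smooth). Taking expectations yields
$$\mathbb{P}\left(l(\hat\beta_n)-l(\beta^*)\right)=\mathbb{E}\Big[\tfrac12\psi''(\xi)\big(f_{\hat\beta_n}(X)-f_{\beta^*}(X)\big)^2\Big].$$
It then remains to bound $\psi''(\xi)$ from below, which amounts to showing that $\xi(X)$ stays in the compact set $\{|x|\le L(9B+\tfrac1n)\}$ on which $\tfrac12\psi''$ attains its minimum $c_n$.

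This localization step is where the event $\mathcal{A}\cap\mathcal{B}$ enters, and it is the crux of the argument. Using (H.1), so that $\|X^g\|_2\le\sqrt{d_g}\,L$ a.s., together with Cauchy--Schwarz on each group, I would obtain the pointwise bound $|f_\beta(X)|\le L\|\beta\|_R$. Applied to $\beta^*$ this gives $|f_{\beta^*}(X)|\le L\|\beta^*\|_R\le LB$ by (H.3). For $\hat\beta_n$ I would write $\|\hat\beta_n\|_R\le\|\hat\beta_n-\beta^*\|_R+\|\beta^*\|_R$ and invoke Lemma~\ref{local}, which guarantees $\|\hat\beta_n-\beta^*\|_R\le 8B+\tfrac1n$ on $\mathcal{A}\cap\mathcal{B}$; combined with (H.3) this yields $|f_{\hat\beta_n}(X)|\le L(9B+\tfrac1n)$. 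Since $\xi(X)$ lies between $f_{\beta^*}(X)$ and $f_{\hat\beta_n}(X)$, and both are dominated in absolute value by $L(9B+\tfrac1n)$, we get $|\xi(X)|\le L(9B+\tfrac1n)$ for every realization of $X$.

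Finally, the pointwise inequality $\tfrac12\psi''(\xi(X))\ge c_n$ (immediate from the definition of $c_n$) integrates to
$$\mathbb{P}\left(l(\hat\beta_n)-l(\beta^*)\right)\ge c_n\,\mathbb{E}\big[(f_{\hat\beta_n}(X)-f_{\beta^*}(X))^2\big],$$
which is exactly the claim. I expect the only genuine obstacle to be the localization of $\hat\beta_n$ in the neighborhood of $\beta^*$ on $\mathcal{A}\cap\mathcal{B}$, since everything else is a routine Taylor argument exploiting the boundedness of the covariates; and I would remark that $c_n>0$ because $F$ is not concentrated at a point, whence $\psi''>0$ on the interior of $\Theta$.
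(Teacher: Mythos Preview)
Your proof is correct and follows essentially the same route as the paper: a second-order Taylor expansion of $\psi$ reduces the excess risk to $\mathbb{E}\big[\tfrac12\psi''(\xi)(f_{\hat\beta_n}(X)-f_{\beta^*}(X))^2\big]$, and then Lemma~\ref{local} together with (H.1), (H.3) and Cauchy--Schwarz on each group localizes the intermediate point in $\{|x|\le L(9B+\tfrac1n)\}$. The paper writes the intermediate point as $\tilde\beta^T X$ and bounds it directly via $|\tilde\beta^T X|\le L\|\tilde\beta-\beta^*\|_R+L\|\beta^*\|_R\le LM+LB$, which is exactly the bound you obtain on the endpoints.
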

\begin{proof}
$$\mathbb{P}\left( l(\hat{\beta}_n)-l(\beta^{*})\right)=-\mathbb{E}\left[ \mathbb{E}(Y\vert X)\left( f_{\hat{\beta}_n}(X)-f_{\beta^{*}}(X)\right)\right]$$
$$+\mathbb{E}\left[ \psi^{'}(f_{\beta^{*}}(X))\left( f_{\hat{\beta}_n}(X)-f_{\beta^{*}}(X)\right) \right] $$
$$+\mathbb{E}\left[ \frac{\psi^{''}(f_{\tilde{\beta}}(X))}{2}\left( f_{\hat{\beta}_n}(X)-f_{\beta^{*}}(X)\right)^{2} \right] $$
where $\tilde{\beta}^{T}X$ is an intermediate point between $\hat{\beta}_n^{T}X$ and ${\tilde{\beta}}^{T}X$ given by a second order Taylor expansion of $\psi$. Since $\psi^{'}(f_{\beta^{*}}(X))=\mathbb{E}(Y\vert X)$ we find
$$\mathbb{P}\left( l(\hat{\beta}_n)-l(\beta^{*})\right)=\mathbb{E}\left[ \frac{\psi^{''}(f_{\tilde{\beta}}(X))}{2}\left( f_{\hat{\beta}_n}(X)-f_{\beta^{*}}(X)\right) ^{2}\right]. $$
Besides we have
$$\vert {\tilde{\beta}}^{T}X\vert \leqslant \vert \tilde{\beta}^{T}X-{\beta^{*}}^{T}X\vert+\vert {\beta^{*}}^{T}X\vert $$
$$\leqslant \sum_{g=1}^{G_{n}}\Vert {\beta^{*}}^{g}-\beta^{g} \Vert_{2}\Vert X^{g}\Vert_{2}+\sum_{g=1}^{G_{n}}\Vert {\beta^{*}}^{g} \Vert_{2}\Vert X^{g}\Vert_{2}.$$
Applying (H.1), we find $$\Vert X^{g}\Vert_{2}\leqslant L\sqrt{d_{g}}.$$ 
Then using Lemma~\ref{local} and (H.3) we find
\begin{equation}
\vert {\tilde{\beta}}^{T}X\vert \leqslant  LM+LB\quad \rm{a.s.}
\label{eq:majo-taylor}
\end{equation}
Furthermore, $\beta^{*}$ and $\hat{\beta}_{n}$ belongs to $\Lambda$ which is a convex set. Therefore $\tilde{\beta} \in \Lambda$ and ${\tilde{\beta}}^{T}X \in \Theta$ a.s.
Thus we conclude $$\mathbb{P}\left( l(\hat{\beta}_n)-l(\beta^{*})\right)
\geqslant c_{n}\mathbb{E}\left[ \left( f_{\hat{\beta}_n}(X)-f_{\beta^{*}}(X)\right) ^{2}\right]$$
where $c_{n}:=\underset{\left\lbrace \vert x\vert\leqslant L(M+B)\right\rbrace  \cap\Theta}{\textrm{min}}\left\lbrace  \frac{\psi^{''}(x)}{2}\right\rbrace  $.
\end{proof}

From Propositon~\ref{prop:lowbound} and (\ref{eq-principale2-gpl}) we deduce that
$$r_{n}\sum_{g=1}^{G_{n}}\sqrt{d_{g}}\Vert \hat{\beta}_n^{g}-{\beta^{*}}^g\Vert_{2}+c_{n}\mathbb{E}\left( \hat{\beta}_n^{T}X-{\beta^{*}}^{T}X\right) ^{2}$$
\begin{equation}
\leqslant 4r_{n}\sum_{g\in H^{*}}\sqrt{d_{g}}\Vert \hat{\beta}_n^{g}-{\beta^{*}}^g\Vert_{2}+\frac{r_{n}}{2}\varepsilon_{n}.
\label{eq-principale2bis-gpl}
\end{equation}

Then the end of the proof is similar to the end of the proof of Theorem 2.4 in \cite{BUN08}. Let $\Sigma$ be the $p \times p$ covariance matrix whose entries are $\mathbb{E}(X_{k}X_{j})$. We have $$\mathbb{E}\left( \hat{\beta}_n^{T}X-{\beta^{*}}^{T}X\right) ^{2}=(\hat{\beta}_n-{\beta^{*}})^{T}\Sigma(\hat{\beta}_n-{\beta^{*}}).$$  Because condition $GS(3,\frac{\varepsilon_{n}}{2},k)$ is satisfied we have $$c_{n}(\hat{\beta}_n-{\beta^{*}})^{T}\Sigma(\hat{\beta}_n-{\beta^{*}})\geqslant c_{n}k\sum_{g \in H^{*}}\Vert \hat{\beta}_n^{g}-{\beta^{*}}^g\Vert_{2}^{2}-\dfrac{\varepsilon_{n}}{2}.$$
Then by using Cauchy-Schwarz inequality in (\ref{eq-principale2bis-gpl}) we find
$$r_{n}\sum_{g=1}^{G_{n}}\sqrt{d_{g}}\Vert \hat{\beta}_n^{g}-{\beta^{*}}^g\Vert_{2}+c_{n}k\sum_{g \in H^{*}}\Vert \hat{\beta}_n^{g}-{\beta^{*}}^g\Vert_{2}^{2}$$
$$\leqslant 4r_{n}\sqrt{\sum_{g \in H^{*}}d_{g}}\sqrt{\sum_{g \in H^{*}}\Vert \hat{\beta}_n^{g}-{\beta^{*}}^{g}\Vert_{2}^{2}}+(r_{n}+1)\dfrac{\varepsilon_{n}}{2}.$$
Now the fact that $2xy\leqslant tx^{2}+y^{2}/t$ for all $t>0$ leads to the following inequality
$$r_{n}\sum_{g=1}^{G_{n}}\sqrt{d_{g}}\Vert \hat{\beta}_n^{g}-{\beta^{*}}^g\Vert_{2}+c_{n}k\sum_{g \in H^{*}}\Vert \hat{\beta}_n^{g}-{\beta^{*}}^g\Vert_{2}^{2}$$
\begin{equation}
\leqslant 4tr_{n}^{2}\gamma^{*}+\frac{1}{t}\sum_{g \in H^{*}}\Vert \hat{\beta}_n^{g}-{\beta^{*}}^g\Vert_{2}^{2}+(r_{n}+1)\dfrac{\varepsilon_{n}}{2}.
\label{equation intermed}
\end{equation}
Replacing $t$ by $\dfrac{1}{c_{n}k}$ in (\ref{equation intermed}) we obtain
$$\sum_{g=1}^{G_{n}}\sqrt{d_{g}}\Vert \hat{\beta}_n^{g}-{\beta^{*}}^g \Vert_{2}\leqslant \dfrac{4}{c_{n}k}r_{n}\gamma^{*}+ (1+\frac{1}{r_{n}})\dfrac{\varepsilon_{n}}{2}.$$
i.e $$\Vert \hat{\beta}_n-{\beta^{*}} \Vert_{R} \leqslant \dfrac{4}{c_{n}k}r_{n}\gamma^{*}+ (1+\frac{1}{r_{n}})\dfrac{\varepsilon_{n}}{2}.$$
What is more $$\sqrt{d_{\textrm{min}}}\Vert \hat{\beta}_n-{\beta^{*}} \Vert_{2,1}\leqslant \sum_{g=1}^{G_{n}}\sqrt{d_{g}}\Vert \hat{\beta}_n^{g}-{\beta^{*}}^g \Vert_{2}.$$
This yields 
\begin{equation}
\label{eq:esti-error}
\Vert \hat{\beta}_n-{\beta^{*}} \Vert_{2,1}\leqslant \dfrac{4}{c_{n}k\sqrt{d_{\textrm{min}}}}r_{n}\gamma^{*}+ \left( 1+\frac{1}{r_{n}}\right) \frac{1}{2n\sqrt{d_{\textrm{min}}}}.
\end{equation}

A similar argument could be made to prove
\begin{equation}
\label{eq:predict-error}
\mathbb{E}\left( \hat{\beta}_n^{T}X-{\beta^{*}}^{T}X\right) ^{2}\leqslant \dfrac{16}{c_{n}^{2}k}r_{n}^{2}\gamma^{*}+ \dfrac{2r_{n}+1}{c_{n}}\dfrac{\varepsilon_{n}}{2}. 
\end{equation}
Finally we conclude the proof using Proposition~\ref{prop:probaAetB}.

\subsection{\textbf{Proof of Proposition~\ref{prop:probaAetB}}}

\begin{proof}
Let $A>\sqrt{2}$. We recall that we have made the assumption $\frac{\log(2G_{n})}{n}\leqslant 1$. We deduce Proposition~\ref{prop:probaAetB} from the two following lemmas.
\begin{lem}\label{lem:propA}
Let $$r_{n}\geqslant \left( 8\sqrt{2}ALC_{L,B}\sqrt{\dfrac{\log(2G_{n})}{n}}\right) \vee \left( 16A^{2}LC_{L,B}\dfrac{\log(2G_{n})}{n}\right)$$
with $A>1$.
Then $$\mathbb{P}\left\lbrace \mathcal{A}\right\rbrace \geqslant 1-2d_{\textrm{max}}(2G_{n})^{1-A^{2}}.$$
\end{lem}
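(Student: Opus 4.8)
The plan is to bound the complementary event $\mathcal{A}^{c}=\bigcup_{g=1}^{G_{n}}\{L_{g}>r_{n}/2\}$ by a union bound and to reduce the control of each $L_{g}$, which is the Euclidean norm of a $d_{g}$-dimensional vector, to a scalar concentration statement handled by Bernstein's inequality. Writing $Z^{g}:=\frac{1}{\sqrt{d_{g}}n}\sum_{i=1}^{n}(Y_{i}X_{i}^{g}-\mathbb{E}(YX^{g}))$ so that $L_{g}=\Vert Z^{g}\Vert_{2}$, I would first use the elementary inclusion $\{\Vert Z^{g}\Vert_{2}>r_{n}/2\}\subseteq\bigcup_{j=1}^{d_{g}}\{|Z^{g}_{j}|>r_{n}/(2\sqrt{d_{g}})\}$, since if every coordinate stayed below that threshold the norm would be at most $r_{n}/2$. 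A direct computation shows that $|Z^{g}_{j}|>r_{n}/(2\sqrt{d_{g}})$ is exactly $|\frac{1}{n}\sum_{i=1}^{n}W_{i}|>r_{n}/2$ with $W_{i}:=Y_{i}X_{i,j}^{g}-\mathbb{E}(YX_{j}^{g})$, the factor $\sqrt{d_{g}}$ cancelling. Thus the whole lemma reduces to a single tail bound for the centred i.i.d. average $\frac{1}{n}\sum_{i}W_{i}$, applied uniformly over the $\sum_{g}d_{g}\le G_{n}d_{\textrm{max}}$ coordinates.

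The key analytic step is to verify the Bernstein moment condition for $W_{i}$. Using (H.1), i.e. $|X_{i,j}^{g}|\le L$ a.s., one has $\mathbb{E}|Y_{i}X_{i,j}^{g}|^{k}\le L^{k}\mathbb{E}|Y_{i}|^{k}$, and Lemma~\ref{lem-moment} gives $\mathbb{E}|Y_{i}|^{k}\le k!\,C_{L,B}^{k}$. Combining this with the centring inequality $\mathbb{E}|U-\mathbb{E}U|^{k}\le 2^{k}\mathbb{E}|U|^{k}$ yields $\mathbb{E}|W_{i}|^{k}\le k!\,(2LC_{L,B})^{k}=\frac{k!}{2}\,v\,b^{k-2}$ for all $k\ge 2$, with the choices $v:=8(LC_{L,B})^{2}$ and $b:=2LC_{L,B}$. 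This is precisely the hypothesis needed to apply the two-sided Bernstein inequality $\mathbb{P}(|\frac{1}{n}\sum_{i}W_{i}|>t)\le 2\exp(-\frac{nt^{2}}{2(v+bt)})$ with $t=r_{n}/2$; the factor $2$ there produces the $2(2G_{n})^{-A^{2}}$ per coordinate.

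It then remains to check that the stated lower bound on $r_{n}$ forces the exponent to exceed $A^{2}\log(2G_{n})$, and here I would split into the two regimes of the Bernstein denominator. If $v\ge br_{n}/2$ then $v+br_{n}/2\le 2v$ and the exponent is at least $nr_{n}^{2}/(16v)$, which is $\ge A^{2}\log(2G_{n})$ exactly when $r_{n}\ge 4\sqrt{v}\,A\sqrt{\log(2G_{n})/n}=8\sqrt{2}ALC_{L,B}\sqrt{\log(2G_{n})/n}$, the first term of the maximum. If instead $br_{n}/2>v$ then $v+br_{n}/2\le br_{n}$ and the exponent is at least $nr_{n}/(8b)$, which is $\ge A^{2}\log(2G_{n})$ exactly when $r_{n}\ge 8bA^{2}\log(2G_{n})/n=16A^{2}LC_{L,B}\log(2G_{n})/n$, the second term. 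Since $r_{n}$ dominates both terms, each coordinate contributes at most $2(2G_{n})^{-A^{2}}$ in either regime; summing over the at most $G_{n}d_{\textrm{max}}$ coordinates gives $\mathbb{P}(\mathcal{A}^{c})\le 2G_{n}d_{\textrm{max}}(2G_{n})^{-A^{2}}=d_{\textrm{max}}(2G_{n})^{1-A^{2}}\le 2d_{\textrm{max}}(2G_{n})^{1-A^{2}}$, which is the claim.

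The main obstacle I anticipate is not the union bound or Bernstein's inequality in the abstract, but the bookkeeping of constants so that the Bernstein parameters $(v,b)$ reproduce the precise numbers $8\sqrt{2}$ and $16$ in the hypothesis; this hinges on the sharp factorial moment bound of Lemma~\ref{lem-moment}, on the centring step, and on carefully tracking which term of the maximum governs each regime of the Bernstein denominator.
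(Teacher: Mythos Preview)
Your proposal is correct and follows essentially the same route as the paper: union bound over groups, reduction of $\{\Vert Z^{g}\Vert_{2}>r_{n}/2\}$ to $d_{g}$ scalar events $\{|\frac{1}{n}\sum_{i}W_{i}|>r_{n}/2\}$, the moment bound $\mathbb{E}|W_{i}|^{k}\le k!(2LC_{L,B})^{k}$ via (H.1) and Lemma~\ref{lem-moment}, and Bernstein's inequality to conclude. The only cosmetic differences are that the paper derives the moment bound through a binomial expansion plus Jensen (rather than your cleaner centring inequality $\mathbb{E}|U-\mathbb{E}U|^{k}\le 2^{k}\mathbb{E}|U|^{k}$) and uses the additive two-exponential form of Bernstein instead of your single-exponential form with a case split on $v$ versus $br_{n}/2$; both versions recover the same constants $8\sqrt{2}$ and $16$, and your final count actually gives $d_{\textrm{max}}(2G_{n})^{1-A^{2}}$ before the trivial factor-of-two relaxation.
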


\begin{lem}
\label{lem:propB}
Let $$r_{n}\geqslant 20AL\left( \underset{\left\lbrace \vert x\vert\leqslant L\kappa_{n}\right\rbrace \cap\Theta}{\textrm{max}}\vert \psi^{'}(x)\vert\right)\sqrt{\dfrac{2\log 2G_{n}}{n}}$$ where $A\geqslant 1$. Then  $$\mathbb{P}(\mathcal{B})\geqslant 1-C(2G_{n})^{-A^{2}/2}$$
where we recall $\kappa_{n}:=17B+\dfrac{2}{n}$.
We can notice that $\mathbb{P}( \mathcal{B}) \underset{n \rightarrow\infty}{\longrightarrow}1.$
\end{lem}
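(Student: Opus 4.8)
The plan is to bound the complementary event $\mathcal{B}^{c}$ by controlling the normalized empirical process $\nu_{n}(\beta,\beta^{*})$ uniformly over the regularizer ball $\{\Vert\beta-\beta^{*}\Vert_{R}\leqslant M\}$, following the localization, symmetrization, contraction and concentration scheme announced before the statement. I would first exploit the boundedness of the covariates to confine the argument of $\psi$ to a compact interval. By (H.1) we have $\Vert X^{g}\Vert_{2}\leqslant L\sqrt{d_{g}}$ a.s., so for any $\beta$ with $\Vert\beta-\beta^{*}\Vert_{R}\leqslant\rho$,
$$\vert\beta^{T}X\vert\leqslant\vert(\beta-\beta^{*})^{T}X\vert+\vert{\beta^{*}}^{T}X\vert\leqslant L\Vert\beta-\beta^{*}\Vert_{R}+L\sum_{g}\sqrt{d_{g}}\Vert{\beta^{*}}^{g}\Vert_{2}\leqslant L\rho+LB,$$
using (H.3) for the last term. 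Since the normalization forces a dyadic peeling of the ball into shells of radius up to $2M$, the effective range is $\vert\beta^{T}X\vert\leqslant 2LM+LB=L\kappa_{n}$, which is precisely why $\kappa_{n}=17B+\tfrac{2}{n}$ appears ($2M+B=2(8B+\tfrac1n)+B$). On this interval $\psi$ is Lipschitz with constant $L_{\psi}:=\max_{\{\vert x\vert\leqslant L\kappa_{n}\}\cap\Theta}\vert\psi^{'}(x)\vert$.

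Next I would slice $\{\Vert\beta-\beta^{*}\Vert_{R}\leqslant M\}$ into the shells $S_{j}=\{2^{j-1}\varepsilon_{n}\leqslant\Vert\beta-\beta^{*}\Vert_{R}\leqslant 2^{j}\varepsilon_{n}\}$. On $S_{j}$ the denominator of $\nu_{n}$ is bounded below by $2^{j-1}\varepsilon_{n}$, so $\sup_{S_{j}}\vert\nu_{n}\vert$ is at most $(2^{j-1}\varepsilon_{n})^{-1}$ times the supremum of $\vert(\mathbb{P}_{n}-\mathbb{P})(l_{\psi}(\beta^{*})-l_{\psi}(\beta))\vert$ over the ball of radius $2^{j}\varepsilon_{n}$. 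Standard symmetrization replaces this centered supremum by a Rademacher supremum (up to a factor $2$), and since $\psi$ is $L_{\psi}$-Lipschitz on the localized interval, the Ledoux--Talagrand contraction principle pulls $\psi$ out at the cost of a factor $L_{\psi}$, leaving the Rademacher average of the linear class. By the group duality identity, with Rademacher signs $\sigma_{i}$,
$$\sup_{\Vert\beta-\beta^{*}\Vert_{R}\leqslant\rho}\Big\vert\frac{1}{n}\sum_{i}\sigma_{i}(\beta-\beta^{*})^{T}X_{i}\Big\vert=\rho\max_{1\leqslant g\leqslant G_{n}}\frac{1}{\sqrt{d_{g}}}\Big\Vert\frac{1}{n}\sum_{i}\sigma_{i}X_{i}^{g}\Big\Vert_{2},$$
so after dividing by the shell radius the bound becomes $j$-independent and reduces to the Rademacher analogue of the quantities $L_{g}$ defining $\mathcal{A}$. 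Its expectation is of order $L_{\psi}\sqrt{\log(2G_{n})/n}$, the extra $\sqrt{\log G_{n}}$ coming from the maximum over the $G_{n}$ groups, each living in $\mathbb{R}^{d_{g}}$ and handled exactly as in the treatment of $\mathcal{A}$.

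Finally I would upgrade the expectation bound to a tail bound. Because the denominator is bounded below by $\varepsilon_{n}$ and $\psi$ is Lipschitz, replacing one observation perturbs each $\nu_{n}(\beta,\beta^{*})$ by at most $2L_{\psi}L/n$ uniformly in $\beta$ (the ratio $\Vert\beta-\beta^{*}\Vert_{R}/(\Vert\beta-\beta^{*}\Vert_{R}+\varepsilon_{n})\leqslant 1$ absorbs the dependence on $\beta$). McDiarmid's bounded-differences inequality then yields Gaussian tails for the supremum, and a union bound over the $O(\log n)$ peeling shells gives $\mathbb{P}(\mathcal{B}^{c})\leqslant C(2G_{n})^{-A^{2}/2}$, once the numerical constant $20$ in the lower bound on $r_{n}$ is chosen to absorb the symmetrization, contraction and peeling factors.

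The delicate point is the interaction between the normalization by $\Vert\beta-\beta^{*}\Vert_{R}+\varepsilon_{n}$ and the contraction step: contraction applies only to a supremum over a \emph{fixed} index set and does not commute with dividing by the varying norm, which is what forces the dyadic peeling and the use of the floor $\varepsilon_{n}$. Keeping the resulting bound uniform across shells --- and thereby pinning down the effective radius $2M$ that produces the constant $\kappa_{n}$ --- is the crux of the argument; the rest is bookkeeping of absolute constants.
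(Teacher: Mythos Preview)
Your proposal is correct and follows essentially the same route as the paper: localize $\psi$ to the interval $\{|x|\leqslant L\kappa_n\}$ via (H.1)--(H.3), peel the regularizer ball into dyadic shells (plus the inner ball of radius $\varepsilon_n$), control the unnormalized supremum on each shell by symmetrization, Ledoux--Talagrand contraction and the dual identity for $\Vert\cdot\Vert_R$, concentrate via McDiarmid, and union-bound over the $O(\log n)$ shells. The only cosmetic difference is that the paper applies McDiarmid to the unnormalized $Z_R:=\sup_{\Vert\beta-\beta^*\Vert_R\leqslant R}|(\mathbb{P}_n-\mathbb{P})(l_\psi(\beta^*)-l_\psi(\beta))|$ (bounded difference $2LRD/n$) and divides by the shell radius afterwards, whereas you apply it directly to $\sup|\nu_n|$ with the uniform bounded difference $2LL_\psi/n$; the two are equivalent.
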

Thus if $$r_{n}\geqslant AKL \left\lbrace  C_{L,B}\vee\underset{\left\lbrace \vert x\vert\leqslant L\kappa_{n}\right\rbrace \cap\Theta}{\textrm{max}}\vert \psi^{'}(x)\vert)\right\rbrace \sqrt{\dfrac{2\log(2G_{n})}{n}}$$ with $K$ choosen such that 
$$r_{n}\geqslant \textrm{max}\left(C_{1},C_{2},C_{3}\right) $$
where $$C_{1}:= 8\sqrt{2}ALC_{L,B}\sqrt{\dfrac{\log(2G_{n})}{n}}$$
$$C_{2}:= 16A^{2}LC_{L,B}\dfrac{\log(2G_{n})}{n}$$
and $$C_{3}:=20AL\left( \underset{\left\lbrace \vert x\vert\leqslant L\kappa_{n}\right\rbrace \cap\Theta}{\textrm{max}}\vert \psi^{'}(x)\vert\right)\sqrt{\dfrac{2\log 2G_{n}}{n}}$$ then $\mathbb{P}(\mathcal{A}\cap \mathcal{B})\geqslant 1-(2d_{\textrm{max}}+C)(2G_{n})^{-A^{2}/2}.$
\end{proof}

\subsection{\textbf{Proofs of the technical lemmata}}

\textbf{Proof of Lemma~\ref{lem-moment}}
\begin{proof}
Let $\theta \in \textrm{Int}(\Theta)$ and $Y_{\theta}$ be a real random variable with density $\exp(\theta y -\psi(\theta))F(dy)$. First we prove that for all $k \in \mathbb{N}$, there exists a constant $C_{\theta}>0$ which depends on $\theta$ such that 
$$ \mathbb{E}\vert Y_{\theta}\vert^{k}\leqslant k!C_{\theta}^{k}.$$
The $k^{th}$ absolute moment of $Y_{\theta}$ is the $k^{th}$ derivate of $H_{\theta}:=\mathbb{E}(e^{s\vert Y_{\theta}\vert})$ at 0. Let $s \in \mathbb{C}$ be given. We have $$H_{\theta}(s)=\dfrac{M_{+}(s+\theta)+M_{-}(\theta-s)}{M(\theta)}$$
where we define $M_{+}$ and $M_{-}$ as  
\begin{displaymath}
M_{+}:
\left|
  \begin{array}{rcl}
     \mathbb{C} & \longrightarrow & \mathbb{C} \\
    z & \longmapsto & \int_{y\geqslant0} e^{yz}F(dy)
  \end{array}
\right.
\end{displaymath}
and

\begin{displaymath}
M_{-}:
\left|
  \begin{array}{rcl}
     \mathbb{C} & \longrightarrow & \mathbb{C} \\
    z & \longmapsto & \int_{y<0} e^{yz}F(dy).
  \end{array}
\right.
\end{displaymath}

$M$ is analytic on $\Omega_{\Theta}:=\left\lbrace z \in \mathbb{C}: \textrm{Re}(z) \in \textrm{Int}(\Theta) \right\rbrace$ and so does $M_{+}$ and $M_{-}$. Therefore $H_{\theta}:s \longmapsto \mathbb{E}(e^{s\vert Y_{\theta}\vert})$ is analytic on $U_{\theta}:=\left\lbrace s \in \mathbb{C}: \textrm{Re}(s+\theta) \in \textrm{Int}(\Theta) \quad \textrm{and} \quad\textrm{Re}(\theta-s) \in \textrm{Int}(\Theta) \right\rbrace $.
Since $\theta \in \textrm{Int}(\Theta)$, $H_{\theta}$ is analytic at the point 0 and hence the function is also holomorphic in a neighborhood of 0.
We recall the following  result for analytic functions (see \cite{WAG03}).\\
\textbf{Theorem:}
\textit{If $f$ is holomorphic on a domain $\Omega$ of $\mathbb{C}$ then $f \in \mathcal{C}^{\infty}(\Omega)$ and if in addition $\Omega$ is simply connected then for all contour $\gamma$ around $z \in \Omega$ we have
$$f^{(n)}(z)=\dfrac{n!}{2i\pi}\int_{\gamma}\dfrac{f(v)}{(v-z)^{n+1}}dv.$$}

Using the previous theorem on $H_{\theta}$ at $0$ and taking $\gamma$ as a circle with radius  $R$ centered in 0 (such that $H_{\theta}$ is holomorphic on $D(0,R)$, of course $R$ depends on $\theta$) we obtain that for all $k \in \mathbb{N}$
\begin{equation}
\vert H_{\theta}^{(k)}(0) \vert \leqslant \dfrac{k!}{2\pi}\left\vert \int_{\gamma}\dfrac{H_{\theta}(v)}{v^{k+1}}dv \right\vert\leqslant \dfrac{k!}{R^{k}}\underset{\vert z\vert\leqslant R }{\textrm{sup}}\vert H_{\theta}(z)\vert.
\label{moment}
\end{equation}
and the result follows with $C_{\theta}:=\textrm{max}\left( 1,\dfrac{1}{R}\underset{\vert z\vert\leqslant R }{\textrm{sup}}\vert H_{\theta}(z)\vert\right) $.
Thanks to assumption (H.2) we can apply~(\ref{moment}) with 
$\theta ={\beta^{*}}^{T}X$ and find that for all $k\in \mathbb{N}$,
$$\mathbb{E}\left(\vert Y \vert^{k}\vert X\right)\leqslant k!\left( C_{{\beta^{*}}^{T}X}\right) ^{k}. $$
Finally (H.1) combined with (H.3) leads to 
$$\mathbb{E}\left(\vert Y\vert^{k} \right)\leqslant k!\left(\underset{\vert \theta\vert\leqslant LB}{\textrm{sup}}C_{\theta} \right) ^{k}$$
and the result follows with $C_{L,B}:=\underset{\vert \theta\vert\leqslant LB}{\textrm{sup}}C_{\theta}$.
\end{proof}

\textbf{Proof of Lemma~\ref{local}}
\begin{proof}
The proof is based on the convexity of the loss function and of the penalty, the main idea of the proof is similar to the one used by B\"uhlmann and van de Geer \cite{BUH11} for the Lasso to show consistency of the excess of risk. Define $t:= \dfrac{M}{M+\sum_{g=1}^{G_{n}}\sqrt{d_{g}}\Vert \hat{\beta}_n^{g}-{\beta^{*}}^g\Vert_{2}}$ and $\tilde{\beta}:=t\hat{\beta}_n+(1-t)\beta^{*}$. Notice $\sum_{g=1}^{G_{n}}\sqrt{d_{g}}\Vert \tilde{\beta}^{g}-{\beta^{*}}^g\Vert_{2}\leqslant M$.
 By convexity of $\beta \rightarrow l_{\psi}(\beta)$ and $\beta \rightarrow \Vert \beta\Vert_{2}$ combined with the fact that $\hat{\beta}_n$ satisifies (\ref{eq debut-gpl}) we find 
 $$\mathbb{P}\left( l(\tilde{\beta})-l(\beta^{*})\right)+2r_{n}\sum_{g=1}^{G_{n}}\sqrt{d_{g}}\Vert \tilde{\beta}^{g}\Vert_{2}$$
 $$\leqslant (\mathbb{P}_{n}-\mathbb{P})\left( l(\beta^{*})-l(\tilde{\beta})\right) +2r_{n}\sum_{g=1}^{G_{n}}\sqrt{d_{g}}\Vert {\beta^{*}}^g\Vert_{2} .$$
On the event $\mathcal{A}\bigcap \mathcal{B}$ we have 
$$\mathbb{P}\left( l(\tilde{\beta})-l(\beta^{*})\right)+2r_{n}\sum_{g=1}^{G_{n}}\sqrt{d_{g}}\Vert \tilde{\beta}^{g}\Vert_{2}$$
$$\leqslant r_{n}\sum_{g=1}^{G_{n}}\sqrt{d_{g}}\Vert \tilde{\beta}^{g}-{\beta^{*}}^{g}\Vert_{2}+r_{n}\dfrac{\varepsilon_{n}}{2} +2r_{n}\sum_{g=1}^{G_{n}}\sqrt{d_{g}}\Vert {\beta^{*}}^g\Vert_{2}.$$ 
Because $\mathbb{P}\left( l(\tilde{\beta})-l(\beta^{*})\right)\geqslant 0$, by adding to both sides of the inequality $2r_{n}\sum_{g=1}^{G_{n}}\sqrt{d_{g}}\Vert {\beta^{*}}^g\Vert_{2} $ and by using the triangular inequality we have
$$\sum_{g=1}^{G_{n}}\sqrt{d_{g}}\Vert \tilde{\beta}^{g}-{\beta^{*}}^{g}\Vert_{2}\leqslant \dfrac{\varepsilon_{n}}{2} +4\sum_{g=1}^{G_{n}}\sqrt{d_{g}}\Vert {\beta^{*}}^g\Vert_{2}.$$
Therefore, using (H.3), we have $$\sum_{g=1}^{G_{n}}\sqrt{d_{g}}\Vert \tilde{\beta}^{g}-{\beta^{*}}^{g}\Vert_{2}\leqslant \dfrac{\varepsilon_{n}}{2} +4B=\dfrac{M}{2}.$$ 
i.e. $$t\sum_{g=1}^{G_{n}}\sqrt{d_{g}}\Vert \hat{\beta}_n^{g}-{\beta^{*}}^{g}\Vert_{2}\leqslant \dfrac{M}{2}$$ and then the definition of $t$ leads to
$$\sum_{g=1}^{G_{n}}\sqrt{d_{g}}\Vert \hat{\beta}_n^{g}-{\beta^{*}}^{g}\Vert_{2}\leqslant M.$$
\end{proof}

\textbf{Proof of Lemma~\ref{lem:propA}}
\begin{proof}
We have $$\mathbb{P}(\mathcal{A}^{c})\leqslant \sum_{g=1}^{G_{n}}\mathbb{P}\left\lbrace \left\Vert \frac{1}{n}\sum_{i=1}^{n}\left( Y_{i}X_{i}^{g}-\mathbb{E}(YX^{g})\right) \right\Vert_{2}^{2}> \dfrac{r_{n}^{2}}{4}d_{g} \right\rbrace $$
\begin{equation}\label{eq:majo-proba-A}
\leqslant  \sum_{g=1}^{G_{n}}\sum_{j=1}^{d_{g}}\mathbb{P}\left\lbrace  \dfrac{1}{n}\left\vert\sum_{i=1}^{n}\left( Y_{i}X_{i,j}^{g}-\mathbb{E}\left( YX_{j}^{g}\right) \right) \right\vert >\dfrac{r_{n}}{2}\right\rbrace. 
\end{equation}
For $j=1,...,d_{g}$ and $i=1,...,n$, let 
$$W_{ij}^{g}:= Y_{i}X_{i,j}^{g}-\mathbb{E}\left( Y_{i}X_{j}^{g}\right).$$
The random variables $\left\lbrace W_{ij}\right\rbrace _{i=1,...,n}$ are independant, identically distributed and centered and for all $m\geqslant 2$	$$\mathbb{E}\vert W_{ij}^{g} \vert ^{m}\leqslant\sum_{k=0}^{m}\binom{k}{m}\mathbb{E}\vert Y_{i}X_{i,j}\vert^{k}\left(  \mathbb{E} \vert Y_{i}X_{i,j}\vert\right) ^{m-k} $$
By using Jensen inequality we obtain
$$\mathbb{E}\vert W_{ij}^{g} \vert^{m}\leqslant 2^{m}\underset{k=1,...,m}{\textrm{max}}\left\lbrace \mathbb{E}\vert Y_{i}X_{i,j}\vert^{k}\mathbb{E}\vert Y_{i}X_{i,j}\vert^{m-k} \right\rbrace. $$
For all $k \in \mathbb{N} $, by (H.1) and Lemma~\ref{lem-moment} we have $$\mathbb{E}\vert Y_{i}X_{i,j} \vert^{k}\leqslant L^{k}k!\left( C_{L,B}\right) ^{k}. $$
Therefore $\mathbb{E}\vert W_{ij}^{g}\vert^{m} \leqslant m!(2LC_{L,B})^{m} $.
Hence the conditions are satisfied to apply Bernstein concentration inequality \cite{BEN62} with $K=2LC_{L,B}$ and $\sigma^{2}=8(LC_{L,B})^{2}$. Thus we obtain
$$\mathbb{P}\left( \dfrac{1}{n} \left\lvert \sum_{i=1}^{n} W_{ij}^{g} \right\rvert> r_{n}/2\right)$$
\begin{equation}\label{eq:majo-proba-A-bis}
\leqslant  2\left(\exp\left(\dfrac{-nr_{n}}{16LC_{L,B}}\right)+ \exp\left(\dfrac{-nr_{n}^{2}}{32(2LC_{L,B})^{2}}\right) \right).
\end{equation}
Finally, from (\ref{eq:majo-proba-A}) and (\ref{eq:majo-proba-A-bis}), we deduce that $\mathbb{P}(\mathcal{A}^{^{c}})$ is bounded by 
$$2d_{\textrm{max}}G_{n}\left(\exp\left(\dfrac{-
nr_{n}}{16LC_{L,B}}\right)+ \exp\left(\dfrac{-nr_{n}^{2}}{32(LC_{L,B})^{2}}\right) \right).$$ 
Therefore if 
$$r_{n}\geqslant A^{2}16LC_{L,B}\dfrac{\log(2G_{n})}{n}\vee A8\sqrt{2}LC_{L,B}\sqrt{\dfrac{\log(2G_{n})}{n}}$$ with $A>1$ then 
$$\mathbb{P}\left\lbrace \mathcal{A}^{C}\right\rbrace\leqslant 2d_{\textrm{max}}(2G_{n})^{1-A^{2}}.$$
\end{proof}

\textbf{Proof of Lemma~\ref{lem:propB}}
\begin{proof} 
The proof rests on the following Lemma
\begin{lem}\label{lem:upbound2}
Let $R>0$ be given. Define $$Z_{R}:=\underset{\sum_{g=1}^{G_{n}}\sqrt{d_{g}}\Vert \beta^{g}-{\beta^{*}}^{g}\Vert_{2}\leqslant R}{\sup}\left\lbrace \left\lvert(\mathbb{P}_{n}-\mathbb{P})\left( l_{\psi}(\beta^{*})-l_{\psi}(\beta)\right)\right\rvert\right\rbrace. $$
If $A\geqslant 1$ then
\begin{equation}\mathbb{P}\left( Z_{R}\geqslant A5DLR\sqrt{\dfrac{2\log 2G_{n}}{n}}\right) \leqslant(2G_{n})^{-A^{2}}
\label{proba majo2-gpl}
\end{equation}
where $D:= \underset{\left\lbrace \vert x\vert\leqslant L(R+B)\right\rbrace \cap\Theta}{\textrm{max}}\left\lbrace \vert\psi^{'}(x)\vert \right\rbrace $.
\end{lem}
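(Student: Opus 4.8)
The plan is to exploit that $l_\psi(\beta)=\psi(\beta^T x)$ is a Lipschitz transform of the linear predictor on the region where the supremum lives, and to treat $Z_R$ by a concentration-of-the-supremum argument around its mean. First I would note that $l_\psi$ does not involve $y$, so $Z_R$ is a function of $X_1,\dots,X_n$ only. On the constraint set $\sum_g\sqrt{d_g}\Vert\beta^g-{\beta^*}^g\Vert_2\le R$, the same computation as in the proof of Proposition~\ref{prop:lowbound} — bounding $\Vert X^g\Vert_2\le L\sqrt{d_g}$ via (H.1) and combining with (H.3) — gives $\vert\beta^T X\vert\le L(R+B)$ almost surely. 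Hence every value $\psi(\beta^T X)$ appearing is evaluated on $\{\vert x\vert\le L(R+B)\}\cap\Theta$, where $\psi$ is Lipschitz with constant $D=\max\vert\psi'\vert$. In particular the increments obey $\vert\psi({\beta^*}^T x)-\psi(\beta^T x)\vert\le D\vert(\beta-\beta^*)^T x\vert\le DLR$, so the function class whose supremum defines $Z_R$ is uniformly bounded by $DLR$.

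\textbf{Concentration of the supremum.} Replacing a single observation $X_i$ changes each summand $\tfrac1n(\psi({\beta^*}^T X_i)-\psi(\beta^T X_i))$ by at most $2DLR/n$, so $Z_R$ satisfies a bounded-difference inequality with increments $2DLR/n$. The concentration results for Lipschitz functions in \cite{LEDOUX91} (equivalently a McDiarmid bound) then give
$$\mathbb{P}\left(Z_R\ge\mathbb{E}Z_R+t\right)\le\exp\left(-\frac{nt^2}{2(DLR)^2}\right).$$
Taking $t=ADLR\sqrt{2\log(2G_n)/n}$ makes the right-hand side equal to $(2G_n)^{-A^2}$, which is exactly the tail claimed. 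It therefore suffices to prove $\mathbb{E}Z_R\le 4DLR\sqrt{2\log(2G_n)/n}$, since then $\mathbb{E}Z_R+t\le 5ADLR\sqrt{2\log(2G_n)/n}$ because $A\ge1$, and the event in the statement is contained in $\{Z_R\ge\mathbb{E}Z_R+t\}$.

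\textbf{Bounding the mean.} Here I would symmetrize, introducing Rademacher signs $(\varepsilon_i)$ to bound $\mathbb{E}Z_R$ by twice the corresponding Rademacher average, and then invoke the Ledoux--Talagrand contraction principle: each map $u\mapsto\psi({\beta^*}^T X_i)-\psi({\beta^*}^T X_i+u)$ is $D$-Lipschitz on the relevant range and vanishes at $0$, so $\psi$ is peeled off at the cost of a factor $2D$. This reduces the task to the linear Rademacher process $\mathbb{E}\sup_{\Vert\delta\Vert_R\le R}\vert\tfrac1n\sum_i\varepsilon_i\delta^T X_i\vert$. Writing $\delta^T X_i=\sum_g(\delta^g)^T X_i^g$ and applying Cauchy--Schwarz group by group bounds this by $R\,\mathbb{E}\max_g\tfrac{1}{\sqrt{d_g}}\Vert\tfrac1n\sum_i\varepsilon_i X_i^g\Vert_2$, where the constraint $\Vert\delta\Vert_R\le R$ has been used to pull out $R$ and take the maximum over groups.

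\textbf{The main obstacle.} The delicate step is the maximal inequality over the $G_n$ groups. For each $g$ the quantity $V_g=\tfrac{1}{\sqrt{d_g}}\Vert\tfrac1n\sum_i\varepsilon_i X_i^g\Vert_2$ has, conditionally on the $X_i$, mean of order $L/\sqrt n$ and, as a function of $\varepsilon$, is $(L/\sqrt n)$-Lipschitz for the Euclidean metric since $\Vert X^g\Vert_{\mathrm{op}}\le L\sqrt{n\,d_g}$; it is thus sub-Gaussian with variance proxy $L^2/n$. A union bound over the $G_n$ groups and the two signs — which is where the factor $2G_n$ inside the logarithm comes from — yields $\mathbb{E}\max_g V_g\le L\sqrt{2\log(2G_n)/n}$ up to constants, and hence $\mathbb{E}Z_R\le 4DLR\sqrt{2\log(2G_n)/n}$. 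I expect the real work to be purely the bookkeeping of constants through symmetrization (factor $2$), contraction (factor $2D$) and this maximal inequality so that everything fits inside the factor $5$; in particular ensuring the cancellation of $d_g$ in $\tfrac{1}{\sqrt{d_g}}\Vert\cdot\Vert_2$ and keeping the logarithm at $\log G_n$ rather than $\log p$ is where care is needed.
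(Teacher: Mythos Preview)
Your proposal is correct and follows essentially the same route as the paper: a bounded-difference/McDiarmid inequality for the concentration of $Z_R$ around $\mathbb{E}Z_R$ (with increment $2DLR/n$), then symmetrization plus the Ledoux--Talagrand contraction principle (using that $\psi$ is $D$-Lipschitz on $\{\vert x\vert\le L(R+B)\}\cap\Theta$) to bound $\mathbb{E}Z_R$, and finally a maximal inequality over the $G_n$ groups after passing to the dual of the $\Vert\cdot\Vert_R$ norm. The only cosmetic difference is the last step: after the H\"older/duality reduction the paper works with the \emph{scalar} Rademacher sums $\tfrac1n\sum_i\varepsilon_i\Vert X_i^g\Vert_2/\sqrt{d_g}$ and applies a Hoeffding-based maximum lemma directly, whereas you keep the vector-valued quantities $V_g=\tfrac{1}{\sqrt{d_g}}\Vert\tfrac1n\sum_i\varepsilon_i X_i^g\Vert_2$ and argue via their sub-Gaussianity---both yield the same $L\sqrt{2\log(2G_n)/n}$ bound and hence $\mathbb{E}Z_R\le 4DLR\sqrt{2\log(2G_n)/n}$.
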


\begin{proof}
Let $\beta$ satisfy $\sum_{g=1}^{G_{n}}\sqrt{d_{g}}\Vert \beta^{g}-{\beta^{*}}^{g}\Vert_{2}\leqslant R$. Notice that if we change  $X_{i}$ by $X_{i}^{'}$ while keeping the others fixed then $Z_{R}$ is modified of at most $\dfrac{2}{n}LR\exp\left(  L(R+B)\right)$.
To see this let $$\mathbb{P}_{n}=\dfrac{1}{n}\sum_{j=1}^{n}1_{X_{j},Y_{j}}$$ and  $$\mathbb{P}_{n}^{'}=\dfrac{1}{n}\sum_{j=1, j\neq i}^{n}1_{X_{j},Y_{j}}+1_{X_{i}^{'},Y_{i}^{'}}$$ then we have
$$(\mathbb{P}_{n}-\mathbb{P})(l_{\psi}(\beta^{*})-l_{\psi}(\beta))-(\mathbb{P}_{n}^{'}-\mathbb{P})(l_{\psi}(\beta^{*})-l_{\psi}(\beta))$$
$$=\frac{1}{n}\left(l_{\psi} (\beta^{*},X_{i})-l_{\psi} (\beta,X_{i})-l_{\psi} (\beta^{*},X_{i}^{'})+l_{\psi} (\beta,X_{i}^{'})\right) $$ 
$$\leqslant \frac{1}{n}\vert\psi^{'}(\tilde{\beta}^{T}X_{i})\vert\vert {\beta^{*}}^{T}X_{i}-\beta^{T}X_{i}\vert+ \frac{1}{n}\vert\psi^{'}(\tilde{\beta}^{T}X_{i}^{'})\vert\vert {\beta^{*}}^{T}X_{i}^{'}-\beta^{T}X_{i}^{'}\vert $$
with $\tilde{\beta}^{T}X_{i}$ which is an intermediate point between $\beta^{T}X_{i}$ and ${\beta^{*}}^{T}X_{i}$ (using a first order Taylor expansion of the exponential function). Then, using the same argument as for~(\ref{eq:majo-taylor}), we have
$$\vert \tilde{\beta}^{T}X_{i}\vert\leqslant LR+LB .$$
Therefore 
$$(\mathbb{P}_{n}-\mathbb{P})(l_{\psi}(\beta^{*})-l_{\psi}(\beta))-(\mathbb{P}_{n}^{'}-\mathbb{P})(l_{\psi}(\beta^{*})-l_{\psi}(\beta))$$
$$\leqslant \frac{2}{n}LR \underset{\left\lbrace \vert x\vert\leqslant L(R+B)\right\rbrace \cap\Theta}{\textrm{max}}\vert\psi^{'}(x)\vert=\frac{2}{n}LRD. $$
We can apply McDiarmid's inequality also called the bounded difference inequality.\\
\textbf{Theorem.}
\textit{Let $A$ a set. Assume $g:A^{N}\rightarrow\mathbb{R}$ is a function that satisfies the bounded difference inequality
$$\underset{x_{1},...,x_{n},x_{i}^{'}\in A}{\textrm{sup}}\vert g(x_{1},...,x_{n})-g(x_{1},...,x_{i-1},x_{i}^{'},x_{i+1},...,x_{n})\vert\leqslant c_{i}.$$
Let $X_{1},..,X_{n}$ be independent random variables all taking values in the set $A$. Then for all $t>0$,
$$\mathbb{P}\left\lbrace g(X_{1},...,X_{n})-\mathbb{E}g(X1,...,X_{n})\geqslant t \right\rbrace \leqslant e^{-2t^{2}/\sum_{i=1}^{n}c_{i}^{2}}.$$}
We can apply McDiarmid's inequality to $Z_{R}$ and obtain 
$$\mathbb{P}(Z_{R}-\mathbb{E}Z_{R}\geqslant u)\leqslant \exp\left( -\frac{nu^{2}}{2R^{2}L^{2}\left(D \right)^{2}  }\right) .$$
Therefore if $r_{n}\geqslant AD LR\sqrt{\dfrac{2\log 2G_{n}}{n}}$ with $A>0$ then 
\begin{equation}
\mathbb{P}(Z_{R}-\mathbb{E}Z_{R}\geqslant r_{n})\leqslant (2G_{n})^{-A^{2}}.
\label{proba1-gpl}
\end{equation}
Now we have to bound the mean $\mathbb{E}Z_{R}$.
\begin{lem}
$$\mathbb{E}{Z_{R}}\leqslant 4RLD \sqrt{\dfrac{2\log(2G_{n})}{n}}.$$
\end{lem}

\begin{proof}
First let us introduce two theorems. Let  $X_{1},...,X_{n}$ independent random variables with values in some space $\mathcal{X}$ and $\mathcal{F}$ a class of real-valued functions on $\mathcal{X}$.\\
\textbf{Theorem: Symmetrization theorem~\cite{WELL96}.}
\textit{ Let $\epsilon_{1},...,\epsilon_{n}$ be Rademacher sequence independent of $X_{1},...,X_{n}$ and $f\in \mathcal{F}$. Then
$$\mathbb{E}\left( \underset{f \in \mathcal{F}}{\sup}\left\lvert \sum_{i=1}^{n}\left\lbrace f(X_{i})-\mathbb{E}(f(X_{i})) \right\rbrace \right\rvert\right)$$
$$ \leqslant 2\mathbb{E}\left( \underset{f \in \mathcal{F}}{\sup}\vert \sum_{i=1}^{n} \epsilon_{i}f(X_{i}) \vert\right). $$}
\textbf{Theorem: Contraction principle~\cite{LEDOUX91}.} \textit{Let $x_{1},...,x_{n}$ elements of $\mathcal{X}$ and $\varepsilon_{1},...,\varepsilon_{n}$ be Rademacher sequence. Consider Lipschitz functions $g_{i}$. Then for any function $f$ and $h$ in $\mathcal{F}$, we have 
$$\mathbb{E}\left( \underset{f \in \mathcal{F}}{\sup}\left\lvert \sum_{i=1}^{n}\epsilon_{i}\left\lbrace g_{i}(f(x_{i}))- g_{i}(h(x_{i}))\right\rbrace \right\rvert\right)$$
$$ \leqslant 2\mathbb{E}\left(\underset{f\in \mathcal{F}}{\sup}\left\lvert \sum_{i=1}^{n}\epsilon_{i} (f(x_{i})- h(x_{i}))\right\rvert \right) $$
}
Let $\epsilon_{1},...,\epsilon_{n}$ a Rademacher sequence independant of $X_{1},...,X_{n}$ and let $\mathcal{S}_{R}:=\left\lbrace \beta \in \mathbb{R}^{p}: \sum_{g=1}^{G_{n}}\sqrt{d_{g}}\Vert \beta^{g}-{\beta^{*}}^g\Vert_{2}\leqslant  R \right\rbrace $. Then by the Symmetrization theorem and the Contraction theorem  ($\psi$ is $D$-lipschitz on the compact set $\mathcal{S}_{R}$ ) we have
$$\mathbb{E}{Z_{R}}\leqslant 4D\mathbb{E}\left(\underset{\beta \in \mathcal{S}_{R}}{\sup}\frac{1}{n}\sum_{i=1}^{n}\left\lvert \epsilon_{i}({\beta^{*}}^{T}X_{i}-\beta^{T}X_{i}) \right\rvert \right) $$
$$\leqslant4DR\mathbb{E}\left(\underset{g \in  \left\lbrace 1,...,G_{n} \right\rbrace }{\max}\left\lvert \frac{1}{n}\sum_{i=1}^{n}\epsilon_{i}\dfrac{\Vert X_{i}^{g}\Vert_{2}}{\sqrt{d_{g}}}\right\rvert \right), $$
by Holder inequality for the last bound.
Now we are going to use the following theorem which is a consequence of Hoeffding inequality.\\
\textbf{Theorem.}
\textit{Let $X_{1},...,X_{n}$ be independent random variables on  $\mathcal{X}$ and $f_{1},...,f_{n}$ real-valued functions on $\mathcal{X}$ which satisfies for all $j=1,...,p$ and all $i=1,...,n$
$$\mathbb{E}f_{j}(X_{i})=0, \quad \vert f_{j}(X_{i})\vert \leqslant a_{ij}.$$
Then $$\mathbb{E}\left( \underset{1\leqslant j\leqslant p}{\max}\left\lvert \sum_{i=1}^{n}f_{j}(X_{i})\right\rvert\right) \leqslant\sqrt{2\log(2p)}\underset{1\leqslant j\leqslant p}{\max}\sqrt{\sum_{i=1}^{n}a_{ij}^{2}}.$$}
By applying this theorem we obtain
$$\mathbb{E}\left(\underset{g \in  \left\lbrace 1,...,G_{n} \right\rbrace }{\max}\left\lvert \frac{1}{n}\sum_{i=1}^{n}\epsilon_{i}\dfrac{\Vert X_{i}^{g}\Vert_{2}}{\sqrt{d_{g}}} \right\rvert\right)\leqslant L\sqrt{\dfrac{2\log(2G_{n})}{n}}.$$
Thus
\begin{equation}
\mathbb{E}{Z_{R}}\leqslant 4RLD\sqrt{\dfrac{2\log(2G_{n})}{n}}.
\label{proba2-gpl}
\end{equation}
\end{proof}
So we can conclude from (\ref{proba1-gpl}) and (\ref{proba2-gpl}) that if $A\geqslant 1$ then
\begin{equation}\mathbb{P}\left( Z_{R}\geqslant A5DLR\sqrt{\dfrac{2\log 2G_{n}}{n}}\right) \leqslant(2G_{n})^{-A^{2}}
\label{proba majo2-gpl}
\end{equation}
for all $R>0$.
\end{proof}

Split up $$\left\lbrace \beta \in \mathbb{R}^{p}:\sum_{g=1}^{G_{n}}\sqrt{d_{g}}\Vert \beta^{g}-{\beta^{*}}^g\Vert_{2}\leqslant M \right\rbrace, $$  where $M=8B+\varepsilon_{n}$, into two sets which are
$$E_{1}=\left\lbrace \beta : \sum_{g=1}^{G_{n}}\sqrt{d_{g}}\Vert \beta^{g}-{\beta^{*}}^g\Vert_{2}\leqslant \varepsilon_{n} \right\rbrace, $$
$$E_{2}=\left\lbrace  \beta: \varepsilon_{n}\leqslant \sum_{g=1}^{G_{n}}\sqrt{d_{g}}\Vert \beta^{g}-{\beta^{*}}^g\Vert_{2}\leqslant  M \right\rbrace$$
$$ \subseteq \bigcup_{j=1}^{j_{n}}\left\lbrace  \beta: 2^{j-1}\varepsilon_{n} <\sum_{g=1}^{G_{n}}\sqrt{d_{g}}\Vert \beta^{g}-{\beta^{*}}^g\Vert_{2}\leqslant 2^{j}\varepsilon_{n} \right\rbrace$$
where  $j_{n}:= \lfloor \log_{2}(nM)\rfloor+1$ is the smaller integer such that $2^{j_{n}}\varepsilon_{n}\geqslant M$. We recall that 
$$\nu_{n}(\beta,\beta^{*}):=\dfrac{(\mathbb{P}_{n}-\mathbb{P})\left( l_{\psi}(\beta^{*})-l_{\psi}(\beta)\right)}{\sum_{g=1}^{G_{n}}\sqrt{d_{g}}\Vert \beta^{g}-{\beta^{*}}^g\Vert_{2} +\varepsilon_{n}}$$ 
and to simplify notation let
$$\alpha_{n}(\beta, \beta^{*}):=(\mathbb{P}_{n}-\mathbb{P})\left( l_{\psi}(\beta^{*})-l_{\psi}(\beta)\right)$$ and $$\Phi(t):= \underset{\left\lbrace \vert x\vert\leqslant t\right\rbrace \cap\Theta}{\textrm{max}}\vert \psi^{'}(x)\vert.$$ Let $A\geqslant1$. We recall that $\kappa_{n}:=17B+\dfrac{2}{n}=2M+B$. On the event $E_{1}$, 
$$\mathbb{P}\left(\underset{\beta \in E_{1}}{\sup}\left \lvert  \nu_{n}(\beta,\beta^{*}) \right\rvert  
\geqslant A10L\Phi(L\kappa_{n})\sqrt{\dfrac{ 2\log(2G_{n})}{n}}\right)$$
$$\leqslant \mathbb{P}\left(\underset{\beta \in E_{1}}{\sup}\left \lvert\alpha_{n}(\beta, \beta^{*}) \right\rvert  \geqslant A10L\Phi(L\kappa_{n})\varepsilon_{n}\sqrt{\dfrac{ 2\log(2G_{n})}{n}}\right)$$
$$\leqslant \mathbb{P}\left(\underset{\beta \in E_{1}}{\sup}\left \lvert \alpha_{n}(\beta, \beta^{*}) \right\rvert  \geqslant A5L\Phi(L(\varepsilon_{n}+B))\varepsilon_{n}\sqrt{\dfrac{ 2\log(2G_{n})}{n}}\right)$$
given that $2M\geqslant \varepsilon_{n}$. From Lemma~\ref{lem:upbound2} with $R=\varepsilon_{n}$ we deduce
$$\mathbb{P}\left(\underset{\beta \in E_{1}}{\sup}\left \lvert \nu_{n}(\beta,\beta^{*}) \right\rvert  
\geqslant A10L\Phi(L\kappa_{n})\sqrt{\dfrac{ 2\log(2G_{n})}{n}}\right)$$
\begin{equation}
\leqslant (2G_{n})^{-A^{2}}.
\label{E2}
\end{equation}
On the event $E_{2}$, using the same type of argument as for (\ref{E2}) with  $R=2^{j}\varepsilon_{n}$ (given that $2M\geqslant 2^{j}\varepsilon_{n}$) for all $j=1,...,j_{n}$ , we find
$$\mathbb{P}\left(\underset{\beta \in E_{2}}{\sup}\left \lvert \nu_{n}(\beta,\beta^{*}) \right\rvert  \geqslant A10L\Phi(L\kappa_{n})\sqrt{\dfrac{ 2\log(2G_{n})}{n}}\right)$$
$$\leqslant j_{n}(2G_{n})^{-A^{2}}.$$
Finally we have
\begin{equation}
\leqslant C^{'} (2G_{n})^{-\frac{A^{2}}{2}}
\label{E1}
\end{equation}where $C^{'}$ is a constant (because $j_{n}=\lfloor \log_{2}(nM)\rfloor+1$ and $n\ll G_{n}$) and the result of Lemma~\ref{lem:propB} follows from (\ref{E2}) and (\ref{E1}) with $C=1+C^{'}$.
\end{proof}

\section{\textbf{Proof of Theorem \ref{theo-elastic}}}

The main step of the proof are the same as for the Lasso.
\begin{proof}
By the same arguments as the ones used to prove \eqref{eq-principale-gpl} we have
$$ \mathbb{P}\left( l(\beta^{*})-l(\hat{\beta}_n)\right)+2r_{n}\Vert \hat{\beta}_n\Vert_{1}+ t_{n}\Vert \hat{\beta}_n\Vert_{2}^{2}$$
\begin{equation}
\leqslant \left( \mathbb{P}_{n}-\mathbb{P}\right) \left( l(\beta^{*})-l(\hat{\beta}_n)\right) +2r_{n} \Vert\beta\Vert_{1}+t_{n}\Vert \beta^{*}\Vert_{2}^{2}.
\label{eq-principale-elastic}
\end{equation}
The upper bound of $(\mathbb{P}_{n}-\mathbb{P})\left( l_{l}(\beta^{*})-l_{l}(\hat{\beta}_n)\right)$, of $(\mathbb{P}_{n}-\mathbb{P})\left( l_{\psi}(\beta^{*})-l_{\psi}(\hat{\beta}_n)\right)$ and the lower bound of $\mathbb{P}\left( l(\beta^{*})-l(\hat{\beta}_n)\right)$ remains the same as those presented in the proof of Theorem~\ref{th:LassoGLM} (see Proposition \ref{prop:upbound1}, Proposition \ref{prop:upbound2} and Proposition \ref{prop:lowbound}). Once these three propositions are proved, the rest of the proof is similar to the one for logistic regression presented in \cite{BUN08}.
On the event $\mathcal{A}\cap \mathcal{B}$ (which occurs with probability at least $1-2(2p)^{1-A^{2}}-C^{'}(2p)^{-A^{2}/2}\geqslant 1-C(2p)^{-A^{2}/2}$) by adding $r_{n}\Vert \hat{\beta}_n-\beta^{*}\Vert_{1}$ and $t_{n}\sum_{j\in I^{*}}(\beta_{j}^{*}-{\hat{\beta}}_{j})^{2}$ to both sides of the inequality (\ref{eq-principale-elastic}), we have
$$r_{n}\Vert \hat{\beta}_n-\beta^{*}\Vert_{1}+\mathbb{P}\left( l(\beta^{*})-l(\hat{\beta}_n)\right)+t_{n}\sum_{j\in I^{*}}(\beta_{j}^{*}-\hat{\beta}_{j})^{2}$$
$$\leqslant  2r_{n}\Vert \hat{\beta}_n-\beta^{*}\Vert_{1}+ 2r_{n}\Vert \beta^{*}\Vert_{1}-2r_{n}\Vert \hat{\beta}_n\Vert_{1}+t_{n}\sum_{j\in I^{*}}(\beta_{j}^{*}-\hat{\beta}_{j})^{2}$$
\begin{equation}
-t_{n}\Vert \hat{\beta}_n\Vert_{2}^{2}+t_{n}\Vert \beta^{*}\Vert_{2}^{2}+\frac{r_{n}}{2}\varepsilon_{n}
\label{eq-elnet}
\end{equation}
with $\varepsilon_{n}=\dfrac{1}{n}$. 
On one hand, by the same argument as for (\ref{eq-principale2-gpl}) we get $$2r_{n}\Vert \hat{\beta}_n-\beta^{*}\Vert_{1}+ 2r_{n}\Vert \beta^{*}\Vert_{1}-2r_{n}\Vert \hat{\beta}_n\Vert_{1}\leqslant 4r_{n}\sum_{j\in I^{*}}\vert \beta_{j}^{*}-\hat{\beta}_{j}\vert $$ 
and on the other hand
 $$t_{n}\sum_{j\in I^{*}}(\beta_{j}^{*}-\hat{\beta}_{j})^{2}-t_{n}\Vert \hat{\beta}_n\Vert_{2}^{2}+t_{n}\Vert \beta^{*}\Vert_{2}^{2}$$
 $$\leqslant 2t_{n}\sum_{j \in I^{*}}{\beta_{j}^{*}}^{2}-2t_{n}\sum_{j \in I^{*}}\beta_{j}^{*}\hat{\beta}_{j}$$
 $$\leqslant r_{n}\sum_{j \in I^{*}}\vert \beta_{j}^{*}-\hat{\beta}_{j}\vert.$$
Therefore inequality (\ref{eq-elnet}) can be bounded by
$$r_{n}\Vert \hat{\beta}_{n}-\beta^{*}\Vert_{1}+\mathbb{P}\left( l(\hat{\beta}_{n})-l(\beta^{*})\right)+t_{n}\sum_{j\in I^{*}}(\beta_{j}^{*}-\hat{\beta}_{j})^{2}$$
$$\leqslant 4r_{n}\sum_{j\in I^{*}}\vert \beta_{j}^{*}-\hat{\beta}_{j}\vert+r_{n}\sum_{j \in I^{*}}\vert \beta_{j}^{*}-\hat{\beta}_{j}\vert+\frac{r_{n}}{2}\varepsilon_{n}.$$
Since $\mathbb{P}\left( l(\beta^{*})-l(\hat{\beta}_{n})\right)\geqslant0$ we have $$r_{n}\Vert \hat{\beta}_{n}-\beta^{*}\Vert_{1}\leqslant 5r_{n}\sum_{j \in I^{*}}\vert \beta_{j}^{*}-\hat{\beta}_{j}\vert +\frac{r_{n}}{2}\varepsilon_{n}$$
and then $$\sum_{j \in {I^{*}}^{C}}\vert \beta_{j}^{*}-\hat{\beta}_{j}\vert\leqslant4\sum_{j \in I^{*}}\vert \beta_{j}^{*}-\hat{\beta}_{j}\vert+\frac{\varepsilon_{n}}{2}.$$
Thus ($\hat{\beta}_{n}-\beta^{*}) \in S(4,\dfrac{\varepsilon_{n}}{2})$.
Using Proposition \ref{prop:lowbound} in the case of groups of size one we find
$$r_{n}\Vert \hat{\beta}_{n}-\beta^{*}\Vert_{1}+t_{n}\sum_{j\in I^{*}}(\beta_{j}^{*}-\hat{\beta}_{j})^{2}+c_{n}\mathbb{E}\left( \hat{\beta}_{n}^{T}X-{\beta^{*}}^{T}X\right) ^{2}$$
$$\leqslant  5r_{n}\sum_{j \in I^{*}}\vert \beta_{j}^{*}-\hat{\beta}_{j}\vert+\frac{r_{n}}{2}\varepsilon_{n},$$
with $c_{n}:=\underset{\left\lbrace\vert x\vert\leqslant L(9B+\frac{1}{n})\right\rbrace \cap\Theta}{\textrm{min}}\left\lbrace  \frac{\psi^{''}(x)}{2}\right\rbrace $.
The rest of the proof follows the guidelines of the proof of (\ref{eq:esti-error}) and  (\ref{eq:predict-error}) and leads to 
$$\Vert \hat{\beta}_{n}-\beta^{*}\Vert_{1}\leqslant \dfrac{(2.5)^{2}r_{n} s^{*}}{t_{n}+c_{n}k}+(1+\frac{1}{r_{n}})\dfrac{\varepsilon_{n}}{2}.$$
and $$\mathbb{E}\left( \hat{\beta}_{n}^{T}X-{\beta^{*}}^{T}X\right) ^{2}\leqslant  \dfrac{2(2.5)^{2}}{c_{n}k(t_{n}+c_{n}k)}r_{n}^{2}s^{*}+\dfrac{2r_{n}+3}{2nc_{n}}.$$
\end{proof}

\bibliographystyle{plain}
\bibliography{biblio-glm-IEEE}

\end{document}